\begin{document}

%%%%%%%%%%%%%%%%

% Outcomment only when entries are known. Otherwise leave as is and
%   default values will be used.
%\setcounter{page}{1}
%\VOLUME{00}%
%\NO{0}%
%\MONTH{Xxxxx}% (month or a similar seasonal id)
%\YEAR{0000}% e.g., 2005
%\FIRSTPAGE{000}%
%\LASTPAGE{000}%
%\SHORTYEAR{00}% shortened year (two-digit)
%\ISSUE{0000} %
%\LONGFIRSTPAGE{0001} %
%\DOI{10.1287/xxxx.0000.0000}%

% Author's names for the running heads
% Sample depending on the number of authors;
% \RUNAUTHOR{Jones}
% \RUNAUTHOR{Jones and Wilson}
% \RUNAUTHOR{Jones, Miller, and Wilson}
% \RUNAUTHOR{Jones et al.} % for four or more authors
% Enter authors following the given pattern:
%\RUNAUTHOR{}
\RUNAUTHOR{Chen et al.}

% Title or shortened title suitable for running heads. Sample:
% \RUNTITLE{Predictive Maintenance in Manufacturing}
% Enter the (shortened) title:
\RUNTITLE{Joint Planning and Operations of Wind Power under Decision-dependent Uncertainty}

% Full title. Sample:
% \TITLE{Optimal Resource Allocation in Humanitarian Logistics: A Stochastic Programming Approach}
% Enter the full title:
\TITLE{Joint Planning and Operations of Wind Power under Decision-dependent Uncertainty}

% Block of authors and their affiliations starts here:
% NOTE: Authors with same affiliation, if the order of authors allows,
%   should be entered in ONE field, separated by a comma.
%   \EMAIL field can be repeated if more than one author
\ARTICLEAUTHORS{%
%\AUTHOR{John Doe,\textsuperscript{a} Jane Smith,\textsuperscript{b}}
%\AFF{\textsuperscript{a}Department of Industrial Engineering, University of XYZ, \EMAIL{john.doe@xyz.edu; \textsuperscript{b}Department of Computer Science, University of ABC, \EMAIL{jane.smith@abc.edu}} 
\AUTHOR{Zhiqiang Chen,\textsuperscript{a} Caihua Chen,\textsuperscript{a,*} Jingshi Cui,\textsuperscript{a} Qian Hu,\textsuperscript{a} Wei Xu\textsuperscript{a,*}}
\AFF{
\textsuperscript{a}School of Management and Engineering, Nanjing University, Nanjing 210008, China\\
\textsuperscript{*}Corresponding authors \\
\textbf{Contact:} \EMAIL{chenzq@smail.nju.edu.cn}; \EMAIL{chchen@nju.edu.cn}; \EMAIL{jscui@nju.edu.cn}; \EMAIL{huqian@nju.edu.cn}; \EMAIL{xuwei@nju.edu.cn}
} % end of the block

% \AUTHOR{Emily Johnson}
% \AFF{Department of Logistics,
% University of ABC, \EMAIL{emily.johnson@abc.edu}}
% Enter all authors

} % end of the block

\ABSTRACT{%
We study a joint wind farm planning and operational scheduling problem under decision-dependent uncertainty. The objective is to determine the optimal number of wind turbines at each location to minimize total cost, including both investment and operational expenses. Due to the stochastic nature and geographical heterogeneity of wind power, fluctuations across dispersed wind farms can partially offset one another, thereby influencing the distribution of aggregated wind power generation—a phenomenon known as the smoothing effect. Effectively harnessing this effect requires strategic capacity allocation, which introduces decision-dependent uncertainty into the planning process. To address this challenge, we propose a two-stage distributionally robust optimization model with a decision-dependent Wasserstein ambiguity set, in which both the distribution and the radius are modeled as functions of the planning decisions, reflecting the statistical characteristics of wind power resources. Then, we reformulate the model as a mixed-integer second-order cone program, and the optimal objective value provides a probabilistic guarantee on the out-of-sample performance. To improve computational efficiency, we develop a constraint generation based solution framework that accelerates the solution procedure by hundreds of times. Numerical experiments using different datasets validate the effectiveness of the solution framework and demonstrate the superior performance of the proposed model.

}%

% \FUNDING{This research was supported by [grant number, funding agency].}

%Supplemental Material:
%Data Ethics & Reproducibility Note:

% Sample
\KEYWORDS{distributionally robust optimization, decision-dependent uncertainty, Wasserstein ambiguity set, wind farm planning}

% Fill in data. If unknown, outcomment the field
% \HISTORY{Received: Month DD, YYYY; Accepted: Month DD, YYYY; Published Online: Month DD, YYYY}

\maketitle
%%%%%%%%%%%%%%%%%%%

% Text of your paper here

\vspace{-0.5cm}

\section{Introduction}\label{sec:Intro}

Wind power has become a pivotal element in the global shift toward renewable energy sources, effectively addressing the dual challenges of climate change mitigation and reducing dependence on fossil fuels (\citealp{bitaraf2017reducing, iea_renewables_2023}). In 2023, global wind turbine installations reached 117 GW, marking a 50\% increase from the previous year \citep{gewc_global_wind_report_2024}. This remarkable growth is largely attributed to advancements in technology, which now allow the investment in wind energy to generate 2.5 times more energy output compared to a decade ago \citep{iea_world_energy_investment_2024}. Consequently, both onshore and offshore wind sectors have attracted significant manufacturing investments, aimed at harnessing this abundant and clean energy source \citep{bnef_energy_transition_investment_trends_2024}. These developments underline the increasing necessity for strategic capacity planning across multiple wind farms.

Nonetheless, multi-farm capacity planning presents complex challenges, especially when integrated with subsequent operational considerations (\citealp{yin2022stochastic, yin2022coordinated}). These challenges arise from the inherent uncertainty associated with wind power generation, necessitating a balance between cost savings achieved through renewable energy production and expenses incurred in managing such uncertainty. On the one hand, exogenous uncertainty, caused by factors such as geographical location, atmospheric conditions, and seasonal variations \citep{tong2010fundamentals}, introduces substantial variability in wind power resources. These fluctuations complicate long-term operational planning and must be addressed to ensure reliable and cost-effective integration of wind energy into the power system. On the other hand, endogenous uncertainty, driven by the dependence of wind power generation on capacity planning decisions, adds further complexity to the planning process. The random fluctuations in output from geographically dispersed wind farms, influenced by local wind conditions, can partially counterbalance one another, which is a phenomenon referred to as the wind power smoothing effect (\citealp{asari2002study, yang2019investigating}). Fully leveraging this effect requires thoughtful and strategic capacity allocation to optimize the overall system performance, thereby introducing the decision-dependent uncertainty into the planning framework.

To address these uncertainties, data-driven optimization methods are crucial. However, the construction and operation of meteorological towers, approximately \$50,000 per year for onshore towers and several million dollars for offshore installations \citep{wind_resource_measurement}, pose significant barriers, particularly in newly planned or underdeveloped regions. Additionally, global climate change has altered wind energy resources \citep{martinez2024global}, rendering only recent data reliable for planning purposes. As a result, the lack of sufficient long-term observations on wind speed and power generation hinders wind energy development (\citealp{gewc_global_wind_report_2017, saeed2021wind, wang2024wind}). Under such conditions, stochastic optimization (SO) often struggles to generalize and perform effectively out-of-sample due to data scarcity. Meanwhile, robust optimization (RO) tends to be overly conservative, as it relies solely on support information without incorporating distribution information \citep{zhu2019wasserstein}. These limitations reduce the effectiveness of both approaches in addressing the complexities of wind farm planning problems. 

These limitations of SO and RO motivate the exploration of data-driven distributionally robust optimization (DRO), which considers the worst case distribution within an ambiguity set. In particular, we utilize an empirical distribution derived from historical data to construct a Wasserstein-based ambiguity set, which can quantify the bias between the empirical distribution and the true distribution while accounting for uncertainties beyond those captured by the empirical distribution (\citealp{mohajerin2018data, gao2024wasserstein}). We emphasize that our work is not a trivial application of Wasserstein DRO (WDRO). Importantly, our research extends conventional decision-independent ambiguity sets to a decision-dependent framework, where both the distribution and the radius are explicitly modeled as functions of the planning decisions. Existing research on decision-dependent ambiguity sets typically relies on discrete distributions (\citealp{noyan2022distributionally, doan2022distributionally}) or constant radii \citep{noyan2022distributionally}, which limit applicability to wind farm planning problems. Moreover, directly extending these methods introduces modeling and tractability issues, such as bilinear terms \citep{basciftci2021distributionally}. To overcome these challenges, we specify distribution functions and radius function based on the characteristics of wind power resources. We adopt a two-stage DRO framework to formulate a joint planning and operations model, thereby avoiding bilinear complexities. Furthermore, we reformulate the model as a mixed-integer second-order cone programming (MISOCP) and develop an efficient exact solution framework. 

More specifically, the main contributions of our work can be summarized as follows:

\begin{enumerate}
    \item We consider the joint planning and operations problem to capture the smoothing effect among distributed wind farms. To hedge against the wind power uncertainty, we formulate the problem as a two-stage DRO model under the Wasserstein ambiguity set. 
    \item We propose a novel decision-dependent Wasserstein ambiguity set, where the distribution and the radius are functions of the planning decisions. The distribution functions are tailored to accurately reflect the characteristics of wind farm generation. Furthermore, the radius functions are derived through statistical methods, ensuring that the ambiguity set provides a probabilistic guarantee that the true distribution lies within it at a specified confidence level.
    \item By leveraging the specific forms of ambiguity sets, we derive a MISOCP reformulation of the proposed model. Theoretically, the optimal objective value obtained from this reformulation guarantees out-of-sample performance of the planning decisions with a defined confidence level.
    \item We develop an efficient solution framework that exploits the underlying structure of the problem. This framework integrates a constraint generation approach with the L-shaped algorithm, significantly enhancing computational efficiency. Furthermore, our numerical experiments reveal substantial speed improvements, particularly as the sample size increases.
    \item Utilizing different datasets, we conduct comprehensive numerical experiments to evaluate the out-of-sample performance metrics of our method against several benchmarks. The results demonstrate notable reductions in fluctuations and operational risk, underscoring the superior performance of the proposed model across diverse scenarios.
\end{enumerate}

\vspace{-0.2cm}

\subsection{Organization}
The rest of the paper is structured as follows. In Section \ref{sec:model}, we present the two-stage distributionally robust wind farm planning model. Section \ref{sec:ddas} introduces the decision-dependent Wasserstein ambiguity set and then provides its specific formulation. This formulation is then used in Section \ref{sec:mr} to reformulate the model as a MISOCP problem. In Section \ref{sec:cgbsf}, we propose a constraint generation based solution framework. Section \ref{sec:ne} reports computational experiments to evaluate the performance of the solution framework and the proposed model. Finally, Section \ref{sec:concl} concludes the paper.

\subsection{Notation}
We let $\mathbb{R}$ denote the real numbers, and $\|\cdot\|$ denote a general norm. Boldface lowercase and uppercase letters represent vectors and matrices, respectively. The element-wise product between two vectors $\bm u, \bm v$ is denoted as $\bm u \cdot \bm v$. For an integer $n \geq 1$, let $[n]$ denotes the set $\{1,2,\ldots,n\}$. For any $\xi \in \mathbb{R}$, $\delta_{\xi}$ denotes the Dirac measure at $\xi$. We use $[t]_+$ to denote $\max\{0,t\}$. Let $\mathcal{P}(\Xi)$ be the set of all probability measure on support $\Xi \subset \mathbb{R}^n$, and $\mathbb{P} \in \mathcal{P}(\Xi)$ denotes a generic distribution.
\vspace{-0.2cm}

\section{Joint Planning and Operations Model}\label{sec:model}

This section begins with an introduction to the two-stage dispatch model used in the operational phase, including key simplifications. We then describe the joint planning and operations problem and associated uncertainties. Lastly, we present the mathematical formulation of the overall model.

\subsection{Two-stage Dispatch Model}\label{subsec:tdm}

Due to the significant uncertainties in power systems, particularly stemming from the variability of renewable generation and the fluctuation of electricity demand, a two-stage dispatch framework is widely adopted in practice: day-ahead scheduling and real-time scheduling.

In the first stage, referred to as day-ahead scheduling, the system operator determines the planned generation output and allocates reserve capacities, which are margins of flexible generation that can be called upon to respond to real-time deviations. These decisions are made under forecast information of demand and renewable output.
In the second stage, real-time scheduling, adjustments are made based on the actual realizations of demand and renewable generation. Generators respond by adjusting their outputs within their reserve capacities to maintain system balance, i.e., ensuring that total generation equals total demand at all times.

To focus on the essential modeling challenges and reduce computational complexity, we adopt several standard simplifications:

\begin{itemize}
    \item Direct current (DC) power flow approximation: We employ the DC power flow model, which linearizes the alternating current (AC) power flow equations by neglecting reactive power and voltage magnitudes.
    
    \item All thermal units are online: All thermal generators are assumed to be committed and operating throughout the scheduling horizon, eliminating the need to model unit commitment decisions explicitly.
    
    \item Reserve provision: Each thermal generator can provide both upward and downward reserve, subject to its reserve capacity limits.
    
    \item Automatic generation control (AGC): AGC is employed to adjust the output of thermal generators in real time to fully compensate for wind forecast errors, ensuring supply-demand balance under all realizations. Each generator incurs a fixed adjustment cost per unit of AGC response (adjustment output).

    \item Deterministic demand loads: Unlike the wind generation outputs, the loads are known or can be accurately predicted.
\end{itemize}

These simplifications are commonly adopted in existing power system dispatch researches involving renewable energy uncertainty (\citealp{wang2018risk, zhu2019wasserstein, esteban2023distributionally}), as they strike a balance between model fidelity and computational tractability.

\subsection{Problem Description and Uncertain Variables}\label{subsec:dpu}

Wind farm planning involves selecting suitable sites and allocating capacity in the presence of uncertain wind resources. In this context, we aim to determine the optimal capacity allocation across potential locations to minimize the total cost, which includes both investment and operational components.

Let $x_w$ denote the integer decision variable representing the number of wind turbines to be installed at location $w$. The capacity planning problem thus determines the optimal number of turbines at each site, subject to investment or wind power penetration constraints.
Given these planning decisions, a two-stage operational scheduling framework is employed to account for the uncertainty inherent in wind power generation. Importantly, wind generation may also exhibit significant variation across different climatic conditions due to exogenous factors such as seasonal changes \citep{jung2019changing}. To capture this variability, the operational process can be partitioned into multiple scenarios $s \in [S]$, each representing a distinct operational condition. For example, we could define each scenario as a season—corresponding to a three-month period within a year.
The total operational cost is computed as the aggregate cost across all scenarios. This joint planning and operations optimization framework captures the long-term value of capacity decisions by embedding their downstream impact on system operations, thereby ensuring a robust and forward-looking solution (see Figure~\ref{figure:framework}).

\begin{figure}[H]
\FIGURE
{\includegraphics[width=0.8\textwidth]{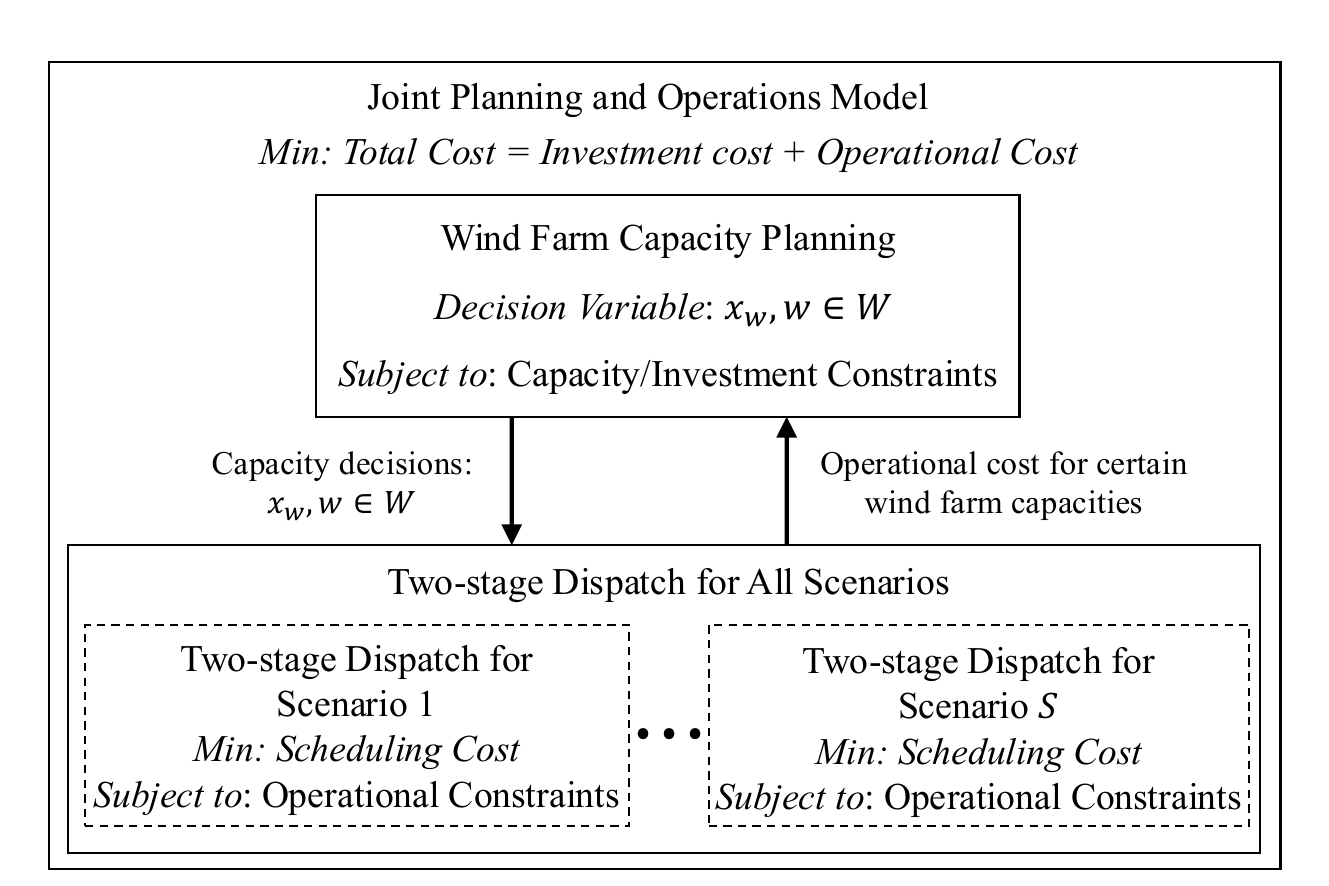}}
{Framework of the Joint Planning and Operations Model \label{figure:framework}}
{}
\end{figure}

To capture the impact of both inherent wind power resource uncertainty and planning decisions on wind farm output, we begin by defining the base unit of uncertainty.

\begin{definition}
    The output power of a single wind turbine at a specific location is defined as the wind power resource, represented by $\xi$.
\end{definition} 

Let $\bm{\xi} = (\xi_w)_{w \in [W]}$ denote the vector of wind power resources across all locations, where each $\xi_w$ is an uncertain variable corresponding to location $w$. The random wind power generation at each wind farm is denoted by $\bm{\zeta}(\bm{x}) = \big(\zeta_w(x_w)\big)_{w \in [W]}$, where $\zeta_w(x_w)=f(x_w,\xi_w)$ represents the output of a wind farm at location $w$ and its distribution depends on the decision $x_w$. This dependency also extends to any linear transformation of $\bm{\zeta}(\bm{x})$, denoted as $L(\bm{\zeta}(\bm{x}))=\sum_wa_w\zeta_w(x_w) \in \mathbb{R}$. Notably, the random variable considered in the ambiguity set, $L(\bm{\zeta}(\bm{x}))$, is one-dimensional, such as the aggregated wind power generation or transmission line power flow.

\subsection{Mathematical Formulation}\label{subsec:mf}

To describe the mathematical formulation, we first introduce the following notations. 

Let $T$, $G$, $L$ and $D$ denote the numbers of time periods, thermal generators, transmission lines and load nodes, respectively. For each scenario $s \in [S]$, period $t \in [T]$, generator $g \in [G]$ and load node $d \in [D]$, the decision variables are defined as follows: $P_{stg}$ represents the output of thermal units, $\bar r_{stg}(\underline r_{stg})$ the regulation up(down) reserve capacity, $\alpha_{stg}$ the adjustment output, $z_{stg}$ the adjustment cost, $w_{st}/l_{st}$ the amount of wind curtailment/load shedding. The parameters are defined as follows: $\Delta_s$ represents the duration of each scenario, $c_w$ the unit investment cost, $\bar \xi_{stw}$ the corresponding prediction value, $\bar \zeta_{stw}(x_w)=f(x_w, {\bar \xi}_{stw})$ the forecasted generation of farm $w$, $UR_{g}(DR_{g})$ the unit cost of upward(downward) reserve, $UA_{g}(DA_{g})$ the unit cost of upward(downward) adjustment, $RU_{g}(RD_{g})$ the ramp up(down) limit, ${\bar P}_{std}$ the demand load, ${\bar x}_w$ the capacity limits, $WC/LS$ the unit cost of wind curtailment/load shedding. For each transmission line $l \in [L]$, the parameter $\bar P_l$ represents the power flow limits, and the parameters $\pi_{gl}/\pi_{wl}/\pi_{dl}$ denote the shift distribution factors for generators, wind farms, and loads, respectively.

\begin{align}
\begin{split} \label{obj}
    P: \quad \min \quad &\sum_w{c_wx_w} +\sum_s\Big\{\sum_{t}\sum_g\left[F_g(P_{stg}) + UR_{g}\bar r_{stg} + DR_{g}\underline r_{stg}\right] +  \\[-3mm]
    &\hspace{6cm}
    \sum_{t}\sup_{{\mathbb P_{st}} \in \mathcal{P}_{st}(\bm{x})}\mathbb{E}_{\mathbb P_{st}}[h_{st}(\bm{\zeta}_{st}(\bm{x}))]\Big\}\Delta_s 
\end{split}
\end{align}
\begin{align}
\text{s.t.} \quad &\sum_gP_{stg} + \sum_w\bar \zeta_{stw}(x_w) =\sum_d{\bar P}_{std} \quad &\forall s,t \label{cos:power balance}\\
&\underline P_{g}+\underline r_{stg} \leq P_{stg} \leq \bar P_{g}-\bar r_{stg} &\forall s,t,g \label{cos:generation limit} \\
&(P_{stg} + \bar r_{stg}) - (P_{s,t-1,g} - \underline r_{s,t-1,g}) \leq RU_g &\forall s,t,g \label{cos:ramp up limit}\\
&(P_{s,t-1,g} + \bar r_{s,t-1,g}) - (P_{stg} - \underline r_{stg}) \leq RD_g &\forall s,t,g \label{cos:ramp down limit} \\[2mm]
\mathbb{P}_{stl} &\left\{ \begin{array}{l}\label{cos:tlc}
\begin{aligned}
\sum_{g}\pi_{gl}&P_{stg}+[-\pi_{gl}]_+(\bar r_{stg} + \underline r_{stg})+\pi_{gl}\underline r_{stg}   \\
&+\sum_{w}\pi_{wl}\zeta_{stw}(x_w)-\sum_{d}\pi_{dl}{\bar P}_{std} \leq \bar{P}_{l}
\end{aligned} \\
\begin{aligned}
\sum_{g}\pi_{gl}&P_{stg}-[\pi_{gl}]_+(\bar r_{stg} + \underline r_{stg})+\pi_{gl}\underline r_{stg} \\
&+\sum_{w}\pi_{wl}\zeta_{stw}(x_w)-\sum_{d}\pi_{dl}{\bar P}_{std} \geq -\bar{P}_{l}
\end{aligned} 
\end{array} \right\} \geq 1-\epsilon \ \  \forall \mathbb{P}_{stl} \in \mathcal{P}_{stl}(\bm{x})  &\forall s,t,l \\[2mm]
&P_{stg},\bar r_{stg},\underline r_{stg} \geq 0  &\forall s,t,g \label{cos:non-negative} \\
&x_w \leq {\bar x}_w & \forall w \label{cos:capacity limit}\\
&x_w \in \{0,1,2,...\} & \forall w \label{cos:integer}
\end{align}
where $F_g(P_{stg})$ is a non-decreasing quadratic function and 
\begin{align}
    SP_{st}: \quad h_{st}(\bm{\zeta}_{st}(\bm{x}))&=\min_{\alpha_{stg},z_{stg},w_{st},l_{st}}\sum_gz_{stg}+WCw_{st} + LSl_{st} \label{second_stage} \\[-1mm]
    \text{s.t.}\quad &DA_g\alpha_{stg} \leq z_{stg} &\forall g \label{subcos:obj non-negative1}\\
    &-UA_g\alpha_{stg} \leq z_{stg} &\forall g \label{subcos:obj non-negative2}\\[-1mm]
    &\sum_g\alpha_{stg} + w_{st}-l_{st} = \sum_w\zeta_{stw}(x_w) - \sum_w\bar \zeta_{stw}(x_w) \label{subcos:error balance}\\[-1mm]
    & -\bar r_{stg} \leq \alpha_{stg} \leq \underline r_{stg} &\quad \forall g \label{subcos:adjustment limit} \\[-1mm]
    & w_{st},l_{st}\geq 0  &\quad \label{subcos:non-negative}
\end{align}

The objective function (\ref{obj}) aims to minimize the total cost associated with wind power generation, encompassing investments in wind turbines, generation costs, reserve costs, and the worst-case expected adjustment costs arising from uncertainty. The constraints of the model are described as follows. Constraint (\ref{cos:power balance}) ensures power balance, maintaining that the total generated power matches the demand at all times. Constraints (\ref{cos:generation limit}), (\ref{cos:ramp up limit}), and (\ref{cos:ramp down limit}) enforce limits on generation output and ramp rates, ensuring that the system operates within the physical capabilities of and thermal generators. Constraint (\ref{cos:tlc}) imposes power flow limits across transmission lines. These are modeled as chance constraints, which allow a tolerance $\epsilon$ for violation probabilities under distributions within decision-dependent ambiguity sets. The DRO framework is applied to consider the worst-case distribution within the ambiguity set. Constraint (\ref{cos:non-negative}) ensures that the outputs from thermal generators and reserve capacities remain non-negative. Constraint (\ref{cos:capacity limit}) and (\ref{cos:integer}) impose the upper bound and integer requirement on the capacity planning decisions. Constraints (\ref{subcos:obj non-negative1}) and (\ref{subcos:obj non-negative2}) enforce non-negative adjustment costs for each thermal generator. Constraint (\ref{subcos:error balance}) manages aggregated wind power prediction errors by ensuring adjustments compensate for deviations, where slack variables $w_{st}$ and $l_{st}$ represent wind curtailment and load shedding, respectively. Constraint (\ref{subcos:adjustment limit}) limits adjustment outputs to remain within the established reserve capacities. Finally, constraint (\ref{subcos:non-negative}) enforces non-negativity for both wind curtailment and load shedding.

\begin{remark}

    The transmission line capacity constraints within chance constraints (\ref{cos:tlc}) can be explicitly derived as follows:
    \begin{equation*}
    \left\{\begin{aligned}
        \max_{-\bar r_{stg}\leq \alpha_{stg}\leq\underline r_{stg}}\sum_{g}\pi_{gl}(P_{stg}+\alpha_{stg})  
        +\sum_{w}\pi_{wl}\zeta_{stw}(x_w)-\sum_{d}\pi_{dl}{\bar P}_{std}\leq\bar{P}_{l} \\
        \min_{-\bar r_{stg}\leq \alpha_{stg}\leq\underline r_{stg}}\sum_{g}\pi_{gl}(P_{stg}+\alpha_{stg})  
        +\sum_{w}\pi_{wl}\zeta_{stw}(x_w)-\sum_{d}\pi_{dl}{\bar P}_{std}\geq-\bar{P}_{l}
    \end{aligned} \right. \text{.}
    \end{equation*}
    These constraints are separated from the inner problem $SP_{st}$ and are formulated based on the classical RO framework, which is inherently more robust compared to traditional formulation \citep{wang2018risk}. The adoption of this formulation, coupled with the constraint generation approach in Section \ref{sec:cgbsf}, significantly accelerates the solution process. Without this approach, computational time would become prohibitive, even for the smallest problem instances. This limitation makes the traditional approach impractical for real-world networks of a relatively large scale.
    
\end{remark}

\section{Decision-dependent Wasserstein Ambiguity Set}\label{sec:ddas}

In this section, we first explore the distributional information inherent in the real data through empirical analysis, and then present a specific formulation tailored to the characteristics of wind power resources.

\subsection{Empirical Analysis of Distributional Information Inherent in $\bm{\xi}$}\label{subsec:eadi}

This section focuses on the empirical analysis of real wind power data, examining the distributional information, namely log-concavity, to capture the uncertainty more accurately.

Wind power generation is significantly influenced by wind speed, which is a key determinant of the energy output from wind turbines. Wind speed is commonly modeled using log-concave distributions, such as Lognormal and Weibull, which effectively represent its variability and statistical characteristics at specific locations \citep{garcia1998fitting}.  This insight provides a solid foundation for exploring the statistical property of wind power resources. To validate this property, we conduct thorough tests to evaluate the log-concavity of the underlying distributions, ensuring that our models accurately reflect the physical dynamics of wind power generation.

The primary dataset \citep{kaggle_2024} used in our research includes detailed meteorological observations and normalized wind power generation data from four operational locations over a span of 4 years. For better organization and statistical analysis, the data is initially grouped by seasons, with each season covering a 90-day interval. Within each season, normalized wind power generation data recorded at the same time of day is further categorized to capture temporal patterns effectively. We test the log-concavity of wind power resource distributions using data spanning 4 years, resulting in a sample size of 360. The null hypothesis is set as 
\begin{equation*}
    H_0: \text{The distribution of } \bm{\xi} \text{ is log-concave}. 
\end{equation*}

Testing for log-concavity has been relatively underdeveloped \citep{gangrade2023sequential}. Recent advancements by \citet{dunn2024universal} have introduced such a test based on the Universal Inference strategy \citep{wasserman2020universal}. This approach is provably valid in finite samples under only the assumption that the data sample is independent and identically distributed. We adopt this methodology to evaluate the log-concavity of the underlying distribution.  
The testing results are compared against a critical value $1/\alpha$ corresponding to the chosen significance level $\alpha$. The null hypothesis $H_0$ is rejected if the test statistic exceeds $1/\alpha$. Across 200 repeated simulations, the test statistic is almost always less than 0.5. For intuitive visualization, we present the proportion of instances where the test statistic exceeds 0.7, a value well below any practical $\alpha \in (0,1)$. The partial results are displayed in Figure \ref{fig:logconcave test} (see Appendix \ref{sec:test_all} for comprehensive results). Even under these stringent conditions, at most one violation occurs across 200 simulations, strongly validating the log-concavity of the distribution. These findings are consistent with the prior research (\citealp{li2018distributionally}), which underscores the appropriateness of log-concave distributions for accurately modeling wind power generation.

\begin{figure}[H]
    \FIGURE
    {\includegraphics[width=0.7\textwidth]{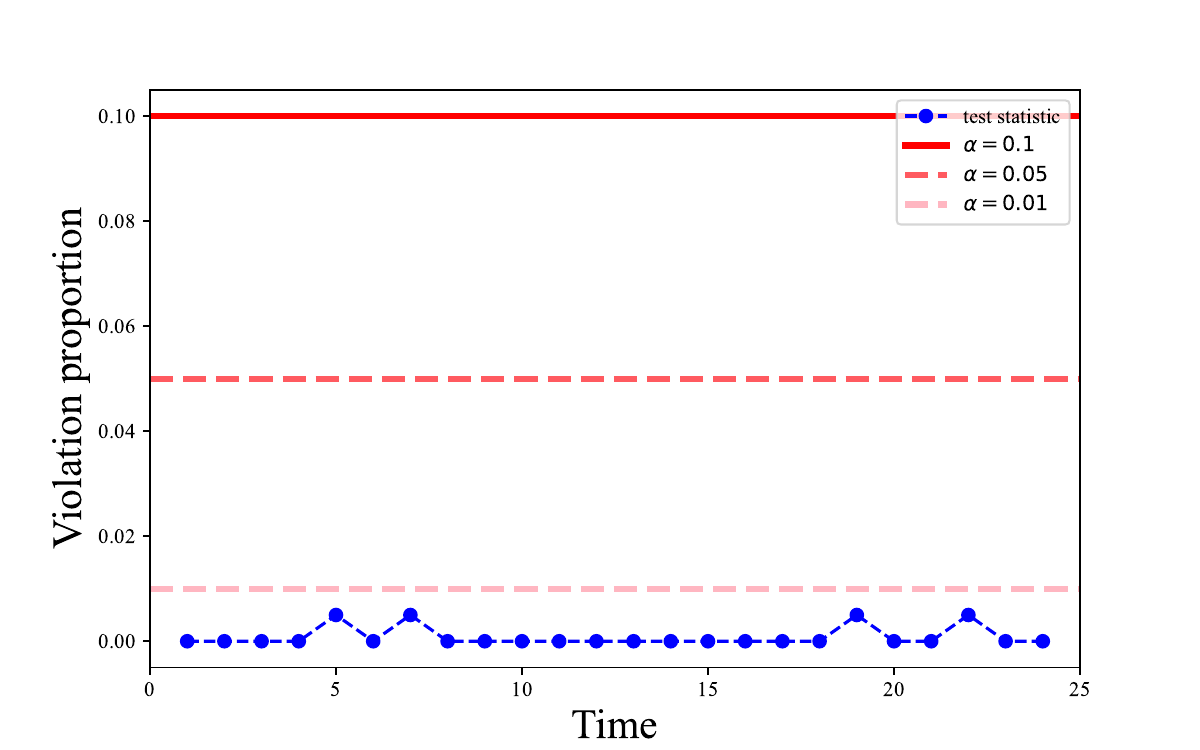}}
    {Tests Results for Log-Concavity \label{fig:logconcave test}}
    {}
\end{figure}

Given these results, we make the following assumption.
\begin{assumption} \label{assumption1}
    The distribution of $\bm{\xi}$ is log-concave.
\end{assumption}

Assumption \ref{assumption1} emphasizes the log-concavity of the distribution, ensuring that their affine transformations also retain log-concavity. This characteristic is critical for specifying the radius function of the Wasserstein ambiguity set.

\subsection{Characterization of the Ambiguity Set for $L(\bm{\zeta}(\bm{x}))$}\label{subsec:cas}

In practice, the true distribution of wind power resources is often unknown, and decision-makers typically have only historical wind power generation data at disposal. Suppose that the historical data contains $N$ observations from $W$ wind farms, these observations form an empirical distribution as follows:

\begin{equation*}
    \hat {\mathbb{P}}=\frac 1N\sum_{i\in [N]} \delta_{L(\hat{\bm{\zeta}}_i(\bm{x}))},
\end{equation*}

However, the true distribution can not be described accurately by the empirical distribution, even combined with log-concavity information. To hedge against uncertainties that are not adequately captured in the empirical distribution, we adopt a WDRO model, which is data-driven and has a strong theoretical foundation. Formally, the $p$-Wasserstein distance between two distributions $\mathbb{P}$ and $\mathbb{Q}$ on $\mathbb{R}$ is defined as
\begin{equation*}
W^p_p\left(\mathbb{P},\mathbb{Q}\right):=\inf_{\Pi\in \mathcal{P}(\mathbb{R}\times\mathbb{R})}\mathbb{E}_{\Pi}[|\tilde\xi-\xi|^p]\text{,}
\end{equation*}
where $\Pi$ is a joint distribution of $\tilde \xi$ and $\xi$ with marginals $\mathbb{P}$ and $\mathbb{Q}$, respectively. Then, the ambiguity set $\mathcal{P}$ is defined as 
\begin{equation}\label{as}
\mathcal{P}{:}=\left\{\mathbb P\in\mathcal{P}(\mathbb{R}):W_p(\hat{\mathbb P}, {\mathbb P})\leq \varepsilon\right\}\text{,}
\end{equation}
which can be interpreted as a Wasserstein ball of radius $ \varepsilon$ centered at the empirical distribution $\hat{\mathbb P}$.

In the traditional ambiguity set, the probability distribution remains fixed, assuming that the uncertainty of the random parameters is exogenous and independent of the decision-making process. This assumption restricts the applicability of such ambiguity sets in scenarios where uncertainty is influenced by decisions. To overcome this limitation and address the decision-dependent uncertainty in the planning problem, we propose the following decision-dependent variant:
\begin{equation}\label{ddas}
     \mathcal{P}(\bm{x}){:}=\left\{\mathbb P(\bm{x})\in\mathcal{P}(\mathbb{R}):W_p(\hat {\mathbb P}(\bm{x}),\mathbb P(\bm{x}))\leq \varepsilon(\bm{x})\right\} \text{,}
\end{equation}
where $\mathbb{P}(\bm{x})$ represents the distribution of $L(\bm{\zeta}(\bm{x}))$. The ambiguity set (\ref{ddas}) accounts for the influence of the decision $\bm{x}$ on the probability measure of the random variable. The functional forms of $\mathbb{P}(\bm{x})$ and $\hat{\mathbb{P}}(\bm{x})$, which capture the impact of decisions on distributions, depend on $\zeta_{w}(x_w)$. To express $\zeta_{w}(x_w)$ in a tractable manner without loss of generality, we make the following assumptions:

\begin{assumption} \label{assumption2}
The output of a wind farm is proportional to the number of wind turbines installed. 
\end{assumption}

Under Assumption \ref{assumption2}, the output of a single wind farm at location $w$ can be expressed as the product of the number of wind turbines and the wind power resource, i.e., $\zeta_{w}(x_w)=f(x_w, \xi_w)=x_w\xi_w$, a formulation commonly used in practice \citep{elia_wind_power}.

\begin{remark}
    Assumption \ref{assumption2} offers a general framework to capture variations in the output of a single wind farm. In cases where internal variability arises due to local conditions or the wake effect, our model can be naturally extended to accommodate such complexities without altering its analytical structure (see Appendix \ref{sec:em}).
\end{remark}

We next focus on the radius function of the decision-dependent Wasserstein ambiguity set based on the proposed distribution function. A significant advantage of the Wasserstein-based ambiguity set is its ability to provide a high confidence level that the true distribution lies within it. For decision-dependent variants, it is essential to ensure that this probabilistic guarantee holds for any given decision $x$, maintaining the robustness and reliability of the WDRO framework under decision-dependent uncertainty. Therefore, we begin by introducing a measure concentration inequality that establishes a connection between the Wasserstein radius and the variance of the distribution. 

\begin{lemma}\label{lemma:mc}
Assume that probability distribution $\mathbb{P}$ on $\mathbb{R}$ is log-concave. Then, for any $1\leq p<2$, $\varepsilon>\left(\frac{C_2}{2-p}\right)^{1/p}\left(\frac\sigma{\sqrt{N}}\right)$, and any $N\geq1$,
\begin{equation}\label{mc}
   \mathbb{P}\{W_p({\hat{\mathbb{P}}},\mathbb{P})\geq \varepsilon\}\leq C_1\mathrm{exp}\left\{-\sqrt\frac{N}{3} \sigma^{-1}\left[\varepsilon-\left(\frac{C_2}{2-p}\right)^{1/p}\left(\frac\sigma{\sqrt{N}}\right)\right]\right\} 
\end{equation}
where $\sigma$ is the standard deviation and $C_1, C_2$ are positive constants.
\end{lemma}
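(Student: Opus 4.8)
The plan is to split the tail bound into a bound on the mean transport cost and a concentration of $W_p(\hat{\mathbb{P}},\mathbb{P})$ around that mean. Writing $a=\left(\frac{C_2}{2-p}\right)^{1/p}\frac{\sigma}{\sqrt N}$ for the threshold appearing in the hypothesis, I would first establish that $\mathbb{E}[W_p(\hat{\mathbb{P}},\mathbb{P})]\le a$, and then prove a one-sided sub-exponential deviation inequality of the form $\mathbb{P}\{W_p(\hat{\mathbb{P}},\mathbb{P})\ge \mathbb{E}[W_p(\hat{\mathbb{P}},\mathbb{P})]+t\}\le C_1\exp\{-\frac{\sqrt N}{\sqrt 3\,\sigma}\,t\}$. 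Combining the two with $t=\varepsilon-a>0$ (which is precisely the regime assumed in the statement) and using monotonicity of the exponential yields the claim, since $\varepsilon-\mathbb{E}[W_p]\ge \varepsilon-a$ forces the exponent to be at least $\sqrt{N/3}\,\sigma^{-1}(\varepsilon-a)$.

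For the mean bound I would exploit that on $\mathbb{R}$ the $p$-Wasserstein distance admits the quantile representation $W_p^p(\hat{\mathbb{P}},\mathbb{P})=\int_0^1|\hat F^{-1}(u)-F^{-1}(u)|^p\,du$, where $F,\hat F$ are the true and empirical CDFs. Taking expectations and applying an order-statistic (Bobkov--Ledoux type) estimate controls $\mathbb{E}[W_p^p(\hat{\mathbb{P}},\mathbb{P})]$ by $N^{-p/2}\int_0^1 \frac{[u(1-u)]^{p/2}}{f(F^{-1}(u))^p}\,du$, where $f$ is the density. Log-concavity (Assumption~\ref{assumption1}) is what makes this functional finite and controllable: it forces the density to decay at least exponentially, so that near the endpoints the integrand is integrable precisely when $p<2$ and its value grows like $\frac{1}{2-p}$ as $p\uparrow 2$; the remaining scale is the standard deviation $\sigma$, via the log-concave comparison between the inter-quantile spread and $\sigma$. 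A final use of Jensen's inequality, $\mathbb{E}[W_p]\le(\mathbb{E}[W_p^p])^{1/p}$, delivers $\mathbb{E}[W_p]\le a$.

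The concentration step is where the real work lies. I would regard $(\xi_1,\dots,\xi_N)\mapsto W_p(\hat{\mathbb{P}},\mathbb{P})$ as a function of independent samples and bound its coordinatewise increments: replacing $\xi_i$ by $\xi_i'$ moves a single atom of $\hat{\mathbb{P}}$, so the triangle inequality and the obvious one-atom coupling give $|W_p-W_p'|\le W_p(\hat{\mathbb{P}},\hat{\mathbb{P}}')\le N^{-1/p}|\xi_i-\xi_i'|$. Because the support is unbounded, a plain bounded-differences (McDiarmid) argument does not apply; instead I would use that a log-concave law has sub-exponential tails, so these increments are sub-exponential, and feed this into an entropy-method or Bernstein-type concentration inequality for functions with sub-exponential differences. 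In the resulting large-deviation regime the linear-in-$t$ (sub-exponential) branch dominates, and carefully tracking the Bernstein parameters produces a rate proportional to $\sqrt N/\sigma$, yielding the constant $\sqrt{N/3}\,\sigma^{-1}$ together with the prefactor $C_1$.

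The main obstacle is this concentration step: controlling the fluctuations of an empirical transport cost on an unbounded support while retaining the correct $\sigma$- and $N$-dependence. The delicate points are handling the extreme order statistics, which dominate the fluctuation of $W_p$ for heavier log-concave tails, and ensuring the sub-exponential parameters combine to give exactly the $\sqrt{N/3}\,\sigma^{-1}$ rate rather than a cruder constant. A cleaner alternative, if admissible, is to invoke a one-dimensional measure-concentration theorem of Bobkov--Ledoux or Fournier--Guillin type as a black box and specialize its constants to the log-concave case, relegating the endpoint analysis to the cited result.
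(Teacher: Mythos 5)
Your decomposition is exactly the one the paper uses: bound $\mathbb{E}[W_p(\hat{\mathbb{P}},\mathbb{P})]$ by $a=\left(\frac{C_2}{2-p}\right)^{1/p}\frac{\sigma}{\sqrt N}$ via Jensen plus a one-dimensional order-statistic estimate (the paper cites Corollary 6.10 of \citet{bobkov2019one}, whose content is precisely the quantile-representation bound you sketch, including the $\frac{1}{2-p}$ blow-up), then apply a deviation inequality around the mean and shift the threshold by $\varepsilon\mapsto\varepsilon-a$; your combination step is line-for-line the paper's. The genuine divergence is in how the concentration inequality is obtained. The paper does not run a differences/entropy-method argument at all: it invokes, as a black box, the concentration theorem of \citet{bobkov2019one} for empirical transport distances under a Poincar\'e-type inequality (stated as Lemma \ref{lemma2}), which gives rate $2N^{1/\max(p,2)}\sqrt{\lambda}\,\varepsilon$, and then uses Corollary 4.3 of \citet{bobkov1999isoperimetric}, which says a log-concave measure on $\mathbb{R}$ has Poincar\'e constant $\lambda\geq \frac{1}{12\sigma^2}$; the factor $\sqrt{N/3}\,\sigma^{-1}$ in the lemma is exactly $2\sqrt{N}\sqrt{\lambda}$ under this spectral-gap bound, so the constant comes out for free. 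This is literally the ``cleaner alternative'' you mention at the end. Your primary route (one-atom increment bound $|W_p-W_p'|\le N^{-1/p}|\xi_i-\xi_i'|$ plus a Bernstein/entropy argument for sub-exponential differences) is plausible in outline but is the risky part you yourself flag: such arguments on unbounded support are delicate, and it is unclear they would reproduce the specific $\sqrt{N/3}\,\sigma^{-1}$ rate rather than a cruder constant, which is the whole point of routing through the Poincar\'e constant of log-concave laws. So: correct structure, correct mean bound, correct combination; for the concentration step your fallback is the paper's actual proof, and your do-it-yourself alternative, while an honest and genuinely different idea, is left at the level of a sketch whose constant-tracking is not carried out.
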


Lemma \ref{lemma:mc} offers an a priori estimate of the probability that the unknown data-generating distribution $\mathcal{P}$ lies beyond the boundary of the Wasserstein ball. In this sense, the Wasserstein-based ambiguity set serves as a confidence region for the data-generating distribution. Moreover, the theorem reveals a crucial relationship between the Wasserstein radius and the variance, where the variance is explicitly affected by decision-making. This provides a theoretical basis for specifying and justifying the radius function. Thus far, based on Lemma \ref{lemma:mc}, we can estimate the radius of the smallest Wasserstein ball that contains $\mathcal{P}$ with a confidence level of $1-\beta$ for a given $\beta \in (0, 1)$. 

\begin{theorem}\label{thm:radius}
    Suppose Assumptions \ref{assumption1} and \ref{assumption2} hold, for $L(\bm{\zeta}(\bm{x}))=\sum_wa_wx_w\xi_{stw}$ with any decision $\bm{x}$, any significance level $\beta \in (0, 1)$ and a sufficiently large sample size $N$, there exists a radius 
    \begin{equation}\label{radius:true variance}
        \varepsilon_{st}(\bm{x}) = \sqrt{\frac{3(\bm{a}\cdot\bm{x})^\top{{\bm{\Sigma}}}_{st}(\bm{a}\cdot\bm{x})}{{N}}}  \log \left( \frac{C_1}{\beta} \right) + \left(\frac{C_2}{2-p}\right)^{1/p}\sqrt{\frac{(\bm{a}\cdot\bm{x})^\top{{\bm{\Sigma}}}_{st}(\bm{a}\cdot\bm{x})}{N}}
    \end{equation}
    such that 
    \begin{equation*}
    \mathbb{P}\{W_p(\hat {\mathbb P}_{st}(\bm{x}),{\mathbb P}_{st}(\bm{x}))\geq \varepsilon_{st}(\bm{x})\}\leq \beta
    \end{equation*}
    where ${\bm{\Sigma}}_{st}$ denotes the covariance matrix, $C_1, C_2$ are positive constants, and $\hat {\mathbb P}_{st}(\bm{x})=\frac{1}{N}\sum_{i=1}^N\delta_{\sum_wa_wx_w\hat\xi_{stwi}}$ represents the decision-dependent empirical distribution.
\end{theorem}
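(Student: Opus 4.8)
The plan is to reduce the statement directly to the measure concentration inequality of Lemma~\ref{lemma:mc}. That lemma is stated for an arbitrary log-concave distribution on $\mathbb{R}$ with standard deviation $\sigma$, so the proof has three movements: (i) verify that the one-dimensional random variable $L(\bm{\zeta}(\bm{x}))$ is log-concave so the lemma applies; (ii) identify its standard deviation $\sigma$ in terms of the data $\bm{a}$, $\bm{x}$ and the covariance $\bm{\Sigma}_{st}$; and (iii) algebraically invert the exponential tail bound, setting its right-hand side equal to $\beta$ and solving for the radius $\varepsilon$.

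First I would establish log-concavity of $L(\bm{\zeta}(\bm{x}))$. Under Assumption~\ref{assumption2} we have $\zeta_{stw}(x_w)=x_w\xi_{stw}$, so that
\begin{equation*}
    L(\bm{\zeta}(\bm{x}))=\sum_w a_w x_w \xi_{stw}=(\bm{a}\cdot\bm{x})^\top\bm{\xi}_{st}
\end{equation*}
is a fixed linear functional of the random vector $\bm{\xi}_{st}$, which is log-concave by Assumption~\ref{assumption1}. Because the class of log-concave distributions is closed under affine maps and marginalization (a consequence of Pr\'ekopa's theorem, as noted after Assumption~\ref{assumption1}), the scalar projection $(\bm{a}\cdot\bm{x})^\top\bm{\xi}_{st}$ is log-concave on $\mathbb{R}$. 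This is exactly the hypothesis required by Lemma~\ref{lemma:mc}.

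Next I would compute the standard deviation. By the variance formula for a linear combination,
\begin{equation*}
    \sigma^2=\operatorname{Var}\!\big(L(\bm{\zeta}(\bm{x}))\big)=(\bm{a}\cdot\bm{x})^\top\bm{\Sigma}_{st}(\bm{a}\cdot\bm{x}),
\end{equation*}
so $\sigma=\sqrt{(\bm{a}\cdot\bm{x})^\top\bm{\Sigma}_{st}(\bm{a}\cdot\bm{x})}$. Substituting this $\sigma$ into the bound of Lemma~\ref{lemma:mc}, setting the right-hand side equal to $\beta$, taking logarithms, and solving the resulting linear equation for $\varepsilon$ gives
\begin{equation*}
    \varepsilon=\sqrt{\tfrac{3}{N}}\,\sigma\,\log\!\Big(\tfrac{C_1}{\beta}\Big)+\Big(\tfrac{C_2}{2-p}\Big)^{1/p}\tfrac{\sigma}{\sqrt{N}},
\end{equation*}
which, after inserting the expression for $\sigma$, is precisely the radius $\varepsilon_{st}(\bm{x})$ in~(\ref{radius:true variance}). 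Since the exponential bound is monotonically decreasing in $\varepsilon$, any radius at least this large yields tail probability at most $\beta$, establishing the claim.

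The hard part will not be the algebra but justifying two points cleanly. The first is the log-concavity of the scalar projection: I must invoke closure of log-concavity under linear maps rather than merely assert it, and handle the degenerate case $\bm{a}\cdot\bm{x}=\bm{0}$ (where $\sigma=0$ and the radius is trivially zero) separately. The second is the side condition $\varepsilon>\big(C_2/(2-p)\big)^{1/p}\big(\sigma/\sqrt{N}\big)$ demanded by Lemma~\ref{lemma:mc}: the chosen radius exceeds this threshold exactly by the strictly positive logarithmic term, which requires $C_1>\beta$; this, together with ensuring that $\bm{\Sigma}_{st}$ is a reliable covariance estimate, is what the qualifier ``sufficiently large sample size $N$'' is guarding.
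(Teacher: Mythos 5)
Your proposal is correct and follows essentially the same route as the paper's proof: invert the tail bound of Lemma~\ref{lemma:mc} by setting its right-hand side equal to $\beta$, solve for $\varepsilon$, and substitute $\sigma=\sqrt{(\bm{a}\cdot\bm{x})^\top\bm{\Sigma}_{st}(\bm{a}\cdot\bm{x})}$ as the standard deviation of the linear combination $L(\bm{\zeta}(\bm{x}))$. Your additional care in justifying log-concavity of the scalar projection (closure under affine maps), the degenerate case $\bm{a}\cdot\bm{x}=\bm{0}$, and the side condition $\varepsilon>\left(C_2/(2-p)\right)^{1/p}\sigma/\sqrt{N}$ makes the argument tighter than the paper's two-line proof, which leaves these points implicit.
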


Theorem~\ref{thm:radius} provides a closed-form expression for the radius function of the proposed ambiguity set. A key insight is that the radius depends explicitly on the true covariance matrix $\bm{\Sigma}_{st}$ and the decision variable $\bm{x}$. This establishes a formal connection between the size of the uncertainty set and the statistical structure of wind power resources, thereby capturing the fundamental mechanism behind the smoothing effect.

However, in the radius formulation (\ref{radius:true variance}), $\bm{\Sigma}_{st}$ denotes the covariance matrix of the true distribution, which is typically unobservable in practice. Decision-makers must instead rely on finite samples to approximate it. To justify this approximation, we introduce the following mild assumption, which ensures that the sample covariance matrix can serve as a estimator of the true covariance matrix.

\begin{assumption} \label{assumption3}
The random vector $\bm{\xi}$ is bounded.
\end{assumption}

\begin{theorem}\label{thm:sample variance}
    Suppose Assumptions \ref{assumption1}-\ref{assumption3} hold, for $L(\bm{\zeta}(\bm{x}))=\sum_wa_wx_w\xi_{stw}$ with any decision $\bm{x}$, any significance level $\beta \in (0, 1)$ and a sufficiently large sample size $N$, there exists a radius 
    \begin{equation}\label{radius:sample variance}
        \varepsilon_{st}'(\bm{x}) = \sqrt{\frac{3(\bm{a}\cdot\bm{x})^\top\widehat{{\bm{\Sigma}}}_{st}(\bm{a}\cdot\bm{x})}{{N}}}  \log \left( \frac{C_1}{\beta} \right) + \left(\frac{C_2}{2-p}\right)^{1/p}\sqrt{\frac{(\bm{a}\cdot\bm{x})^\top\widehat{{\bm{\Sigma}}}_{st}(\bm{a}\cdot\bm{x})}{N}}
    \end{equation}
    such that 
    \begin{equation*}
        \mathbb{P}\{W_p(\hat {\mathbb P}_{st}(\bm{x}),{\mathbb P}_{st}(\bm{x}))\geq \varepsilon_{st}'(\bm{x})\}
        \leq \beta'
    \end{equation*}
    where $\hat {\bm{\Sigma}}_{st}$ denotes the sample covariance matrix,
    \begin{equation}\label{beta:sample variance}
        \begin{aligned}
            \beta'=&\inf_{k\in(0,1)}\left\{C_1\mathrm{exp}\Big[- k \log \Big( \frac{C_1}{\beta} \Big) +\frac{1-k}{\sqrt3}\Big(\frac{C_2}{2-p}\Big)^{1/p}\Big] \right.\\
            &+2\exp\big[-N\min\{C_3(1-k^2)^2((\bm{a}\cdot\bm{x})^\top{{\bm{\Sigma}}}_{st}(\bm{a}\cdot\bm{x}))^2, C_4(1-k^2)(\bm{a}\cdot\bm{x})^\top{{\bm{\Sigma}}}_{st}(\bm{a}\cdot\bm{x})\}\big]\Big\}\text{,}
        \end{aligned}
    \end{equation}
    and $C_1, C_2, C_3, C_4$ are positive constants.
\end{theorem}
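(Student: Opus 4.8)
The plan is to transfer the guarantee of Theorem~\ref{thm:radius}, which is stated in terms of the true variance $\sigma^2:=(\bm{a}\cdot\bm{x})^\top\bm{\Sigma}_{st}(\bm{a}\cdot\bm{x})$, to the sample-variance radius $\varepsilon_{st}'(\bm{x})$ built from $\hat\sigma^2:=(\bm{a}\cdot\bm{x})^\top\widehat{\bm{\Sigma}}_{st}(\bm{a}\cdot\bm{x})$. Since both $W_p(\hat{\mathbb{P}}_{st}(\bm{x}),\mathbb{P}_{st}(\bm{x}))$ and $\hat\sigma$ are functions of the same sample, $\varepsilon_{st}'(\bm{x})$ is itself random, and the guarantee can only fail when $\hat\sigma$ underestimates $\sigma$. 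I would therefore fix an auxiliary parameter $k\in(0,1)$, introduce the good event $B:=\{\hat\sigma\ge k\sigma\}$, and split
\[\mathbb{P}\{W_p\ge \varepsilon_{st}'(\bm{x})\}=\mathbb{P}\{W_p\ge \varepsilon_{st}'(\bm{x}),\,B\}+\mathbb{P}\{W_p\ge \varepsilon_{st}'(\bm{x}),\,B^{c}\}.\]

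For the first term I would use that both summands defining $\varepsilon_{st}'(\bm{x})$ scale linearly in $\hat\sigma$, so on $B$ one has the deterministic lower bound $\varepsilon_{st}'(\bm{x})\ge \tilde\varepsilon:=\tfrac{k\sigma}{\sqrt{N}}\big[\sqrt{3}\log(C_1/\beta)+\big(\tfrac{C_2}{2-p}\big)^{1/p}\big]$. Consequently $\{W_p\ge \varepsilon_{st}'(\bm{x})\}\cap B\subseteq\{W_p\ge\tilde\varepsilon\}$, and since $L(\bm{\zeta}(\bm{x}))$ is a linear image of the log-concave vector $\bm{\xi}$ (Assumption~\ref{assumption1}) with variance $\sigma^2$, Lemma~\ref{lemma:mc} applies at the deterministic radius $\tilde\varepsilon$. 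Substituting $\tilde\varepsilon$ into the exponent of (\ref{mc}), the factors of $N$ and $\sigma$ cancel and the bound reduces to exactly $C_1\exp\big[-k\log(C_1/\beta)+\tfrac{1-k}{\sqrt{3}}\big(\tfrac{C_2}{2-p}\big)^{1/p}\big]$, the first term of (\ref{beta:sample variance}). (For $k$ close to $1$ the hypothesis $\tilde\varepsilon>\big(\tfrac{C_2}{2-p}\big)^{1/p}\sigma/\sqrt{N}$ of Lemma~\ref{lemma:mc} is satisfied, so the infimum in (\ref{beta:sample variance}) is effectively restricted to such $k$.)

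For the second term I would bound $\mathbb{P}\{W_p\ge \varepsilon_{st}'(\bm{x}),\,B^{c}\}\le\mathbb{P}(B^{c})=\mathbb{P}\{\hat\sigma^2<k^2\sigma^2\}\le\mathbb{P}\{|\hat\sigma^2-\sigma^2|>(1-k^2)\sigma^2\}$, a deviation of the sample variance of the scalar $Y_i:=\sum_w a_w x_w\hat\xi_{stwi}$. Writing $\hat\sigma^2$ as an average of the i.i.d.\ bounded variables $(Y_i-\mu)^2$ together with the lower-order sample-mean correction $(\bar Y-\mu)^2$, Assumption~\ref{assumption3} renders these terms bounded, and Bernstein's inequality yields a two-sided tail of the form $2\exp[-N\min\{c\,t^2/v,\,c'\,t/b\}]$ with $t=(1-k^2)\sigma^2$. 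Matching the sub-Gaussian and sub-exponential regimes gives precisely $2\exp\big[-N\min\{C_3(1-k^2)^2(\sigma^2)^2,\,C_4(1-k^2)\sigma^2\}\big]$, the second term of (\ref{beta:sample variance}); summing the two bounds and taking the infimum over $k\in(0,1)$ produces $\beta'$.

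The main obstacle is the second term: establishing the sample-variance concentration in exactly the stated $\min\{\cdot,\cdot\}$ form. The delicate points are (i) routing the boundedness of $\bm{\xi}$ through the coefficients $a_w x_w$ so that the Bernstein variance proxy $v$ and range $b$---both bounded uniformly over the compact feasible region $\{0\le x_w\le\bar x_w\}$---can be absorbed into decision-free constants $C_3,C_4$; and (ii) controlling the sample-mean correction $(\bar Y-\mu)^2$ by a Hoeffding bound and verifying that it is dominated by the leading deviation, so that it does not alter the exponent. Everything else---the event decomposition and the plug-in into Lemma~\ref{lemma:mc}---is routine once the linear scaling of $\varepsilon_{st}'(\bm{x})$ in $\hat\sigma$ is observed.
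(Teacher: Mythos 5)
Your proposal is correct and follows essentially the same route as the paper's proof: the same split on the event $\{\hat\sigma\ge k\sigma\}$, the same plug-in of the rescaled radius into Lemma~\ref{lemma:mc} yielding the first term of (\ref{beta:sample variance}), the same decomposition of the sample variance into an average of squares plus a mean-correction term bounded by sub-exponential/Bernstein tails, and the same final infimum over $k$. If anything, your handling of the first term---replacing the paper's conditioning on the sample-dependent event by the deterministic lower bound $\varepsilon_{st}'(\bm{x})\ge\tilde\varepsilon$ on $B$ and applying Lemma~\ref{lemma:mc} unconditionally---is a cleaner rendering of the same computation, and your remark about absorbing the $\|\bm{a}\cdot\bm{x}\|$-scaling into decision-free constants addresses a normalization the paper glosses over.
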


Theorem \ref{thm:sample variance} establishes a probabilistic guarantee for the use of the sample covariance matrix in defining the radius of the Wasserstein ambiguity set. This result is built on the mild Assumption \ref{assumption3}, which is a reasonable premise due to the limited real-world wind power resources.

While the radius function (\ref{radius:sample variance}) is complex, with a pre-specified $p$ and a fixed sample size $N$, the radius $\varepsilon_{st}'(\bm{x})$ simplifies to
\begin{equation}\label{radius2}
\varepsilon_{st}'(\bm{x}) = \kappa_{st}\sqrt{(\bm{a}\cdot\bm{x})^\top\widehat{{\bm{\Sigma}}}_{st}(\bm{a}\cdot\bm{x})}\text{,}
\end{equation}
where $\kappa_{st}$ is a constant. Although there exists inevitable deviation between $\varepsilon_{st}'(\bm{x})$ and $\varepsilon_{st}(\bm{x})$, in practical applications, the optimal value of $\kappa_{st}$ can be effectively tuned through cross-validation.

\section{Model Reformulation}\label{sec:mr}

We now derive a tractable reformulation of the two-stage WDRO model, facilitating its direct implementation in standard optimization solvers. 

The reformulation challenge arises from two aspects: the inner problem involving uncertain aggregated wind power generation and the chance constraints addressing uncertain transmission line power flow. Both sources of uncertainty originate from the decision-dependent random output of wind farms. These two components, denoted as $L_1(\bm{\zeta}_{st}(\bm{x}))$ and $L_2(\bm{\zeta}_{st}(\bm{x}))$, respectively, are linear combinations of the wind farm output and inherit its decision-dependent characteristics. To address the uncertainty in the inner problem $SP_{st}$ and the chance constraints (\ref{cos:tlc}), we construct ambiguity sets specifically tailored to each component. Specifically, $\forall s \in [S], t \in [T], l\in [L]$, we define $L_1(\bm{\zeta}_{st}(\bm{x}))=\sum_wx_w\xi_{stw}$ for the inner problem $SP_{st}$, and $L_2(\bm{\zeta}_{st}(\bm{x}))=\sum_{w}\pi_{wl}x_w\xi_{stw}$ for the chance constraints (\ref{cos:tlc}). These formulations enable precise management of the decision-dependent uncertainty in both components.

\subsection{Inner Problem Reformulation}

The inner worst-case expectation problem, $\max_{{\mathbb P_{st}} \in \mathcal{P}_{st}(\bm{x})}\mathbb{E}_{\mathbb P_{st}}[h_{st}(\bm{\zeta}_{st}(\bm{x}))]$, poses a significant challenge as it involves infinite-dimensional optimization over probability distributions. Using the duality theory, we derive the tractable reformulations as a MISOCP for the inner problem $SP_{st}$.

\begin{theorem}\label{thm:mr}
    Given a decision $\bm{x}$, along with the ambiguity set in (\ref{ddas}) defined over the random variable $L_1(\bm{\zeta}_{st}(\bm{x}))$ and the radius specified in (\ref{radius2}), for every $s\in [S],t\in [T]$, 
    \begingroup
    \setlength{\abovedisplayskip}{0pt}
    \setlength{\belowdisplayskip}{0pt}
    \begin{align}
        \sup_{{\mathbb P_{st}} \in \mathcal{P}_{st}(\bm{x})}&\mathbb{E}_{\mathbb P_{st}}[h_{st}(\bm{\zeta}_{st}(\bm{x}))] = \min_{\vartheta_{st}}\ \mathbb{E}_{\hat {\mathbb P}_{st}(\bm{x})}[h_{st}(\bm{\zeta}_{st}(\bm{x}))] + \phi_{st}\kappa_{st}\vartheta_{st}\label{mr}
    \end{align}
    \begin{equation}\label{reformulation:radius}
        \hspace{2.7cm}\text{s.t.} \quad \sqrt{\bm{x}^\top\widehat{{\bm{\Sigma}}}_{st}\bm{x}} \leq \vartheta_{st}
    \end{equation}
    \endgroup
    where $\hat {\mathbb P}_{st}(x)=\frac{1}{N}\sum_{i=1}^N\delta_{\sum_w x_w\hat\xi_{stwi}}$, $\phi_{st}:=\max_{\gamma_{st} \in \mathcal{D}_{st}}|\gamma_{st}|$, and $\mathcal{D}_{st}$ is the dual feasible region.
\end{theorem}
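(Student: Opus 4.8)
The plan is to reduce the infinite-dimensional worst-case expectation to a one-dimensional Wasserstein problem, exploit LP duality to expose the Lipschitz structure of $h_{st}$, and then invoke the strong-duality reformulation for worst-case expectations over a Wasserstein ball. First I would observe that the random vector $\bm{\zeta}_{st}(\bm{x})$ enters the second-stage program $SP_{st}$ only through the right-hand side of the balance constraint (\ref{subcos:error balance}), i.e. through the scalar aggregate $u:=L_1(\bm{\zeta}_{st}(\bm{x}))=\sum_w x_w\xi_{stw}$; hence $h_{st}$ may be viewed as a univariate function $h_{st}(u)$. Because the curtailment and load-shedding slacks $w_{st},l_{st}\ge 0$ are unbounded, $SP_{st}$ is feasible for every realization, so $h_{st}(u)<+\infty$ for all $u\in\mathbb{R}$. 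Taking the LP dual of $SP_{st}$ and letting $\gamma_{st}$ denote the multiplier of (\ref{subcos:error balance}), strong LP duality yields
\[
h_{st}(u)=\max_{\gamma_{st}\in\mathcal{D}_{st}}\bigl\{\gamma_{st}(u-\bar u)+c(\gamma_{st})\bigr\},\qquad \bar u=\sum_w x_w\bar\xi_{stw},
\]
where $c(\cdot)$ collects the dual terms independent of $u$. This exhibits $h_{st}$ as a convex piecewise-linear function of $u$ whose affine pieces have slope $\gamma_{st}$, so its global Lipschitz modulus is $\phi_{st}=\max_{\gamma_{st}\in\mathcal{D}_{st}}|\gamma_{st}|$. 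Here I would note that $\gamma_{st}$ is bounded (the slack variables force $-LS\le\gamma_{st}\le WC$), so $\phi_{st}<\infty$ and $h_{st}$ is genuinely Lipschitz.

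Next I would apply the strong-duality theorem for worst-case expectations over a $1$-Wasserstein ball to the ambiguity set (\ref{ddas}), which is defined over all of $\mathcal{P}(\mathbb{R})$ with radius $\varepsilon_{st}'(\bm{x})$. This gives
\[
\sup_{\mathbb{P}_{st}\in\mathcal{P}_{st}(\bm{x})}\mathbb{E}_{\mathbb{P}_{st}}[h_{st}(u)]
=\inf_{\lambda\ge 0}\Bigl\{\lambda\,\varepsilon_{st}'(\bm{x})+\mathbb{E}_{\hat{\mathbb{P}}_{st}(\bm{x})}\bigl[\textstyle\sup_{u'}\bigl(h_{st}(u')-\lambda|u'-u|\bigr)\bigr]\Bigr\}.
\]
Using the $\phi_{st}$-Lipschitz property, the inner supremum equals $h_{st}(u)$ whenever $\lambda\ge\phi_{st}$ and is $+\infty$ otherwise; the dual therefore collapses to $\inf_{\lambda\ge\phi_{st}}\lambda\,\varepsilon_{st}'(\bm{x})+\mathbb{E}_{\hat{\mathbb{P}}_{st}(\bm{x})}[h_{st}(u)]$, attained at $\lambda=\phi_{st}$, which yields the closed form $\mathbb{E}_{\hat{\mathbb{P}}_{st}(\bm{x})}[h_{st}(\bm{\zeta}_{st}(\bm{x}))]+\phi_{st}\,\varepsilon_{st}'(\bm{x})$.

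Finally I would substitute the radius (\ref{radius2}), $\varepsilon_{st}'(\bm{x})=\kappa_{st}\sqrt{\bm{x}^\top\widehat{\bm{\Sigma}}_{st}\bm{x}}$, and introduce the epigraph variable $\vartheta_{st}$ with the second-order cone constraint $\sqrt{\bm{x}^\top\widehat{\bm{\Sigma}}_{st}\bm{x}}=\|\widehat{\bm{\Sigma}}_{st}^{1/2}\bm{x}\|_2\le\vartheta_{st}$ of (\ref{reformulation:radius}). Since $\phi_{st},\kappa_{st}\ge 0$, minimizing $\phi_{st}\kappa_{st}\vartheta_{st}$ forces $\vartheta_{st}=\sqrt{\bm{x}^\top\widehat{\bm{\Sigma}}_{st}\bm{x}}$ at optimality, reproducing (\ref{mr})--(\ref{reformulation:radius}). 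The main obstacle I anticipate is the middle stage: correctly reducing $h_{st}$ to a univariate convex piecewise-linear function with a \emph{finite} Lipschitz modulus via LP duality, and then verifying that the Wasserstein strong-duality reformulation is exact (no duality gap) and that the inner conjugate simplifies to $h_{st}(u)$ for $\lambda\ge\phi_{st}$. Once the Lipschitz constant $\phi_{st}=\max_{\gamma_{st}\in\mathcal{D}_{st}}|\gamma_{st}|$ is pinned down and the support is taken to be $\mathbb{R}$, the remaining SOCP recasting is mechanical.
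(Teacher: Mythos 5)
Your proposal is correct and follows essentially the same route as the paper's proof: both reduce the recourse to a scalar function of $L_1(\bm{\zeta}_{st}(\bm{x}))$, take the LP dual of $SP_{st}$ to expose the slopes $\gamma_{st}\in\mathcal{D}_{st}$, apply strong duality for the $1$-Wasserstein worst-case expectation, and identify the optimal dual multiplier with $\phi_{st}=\max_{\gamma_{st}\in\mathcal{D}_{st}}|\gamma_{st}|$ before substituting the radius (\ref{radius2}) via the epigraph variable $\vartheta_{st}$. The only difference is presentational: you invoke the Lipschitz-regularization collapse of the inner conjugate directly, whereas the paper reaches the same conclusion through an explicit minimax interchange and a case analysis on $\phi_{st}$ versus $\phi_{st}^\star$.
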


Based on Theorem \ref{thm:mr}, the inner worst-case expectation problem is reformulated as a regularization problem comprising a sample average approximation term and a regularization term. This transformation simplifies the structure of the optimization problem, enabling the reduction of the original two-stage DRO model $P$ into a computationally tractable single-stage problem with chance constraints.

Another implication of the MISOCP reformulation lies in the statistical property of the WDRO. The objective value of the inner problem provides a probabilistic guarantee on the out-of-sample performance of planning decisions at a high confidence level. This property ensures the robustness of the optimal decision and enhances the reliability of the proposed model, as formalized in the following result.

\begin{theorem}\label{thm:pg}
    Suppose Assumptions \ref{assumption1}-\ref{assumption3} hold, then for any decision $\bm{x}$ and a sufficiently large sample size $N$, there exists a radius parameter $\kappa_{st}$ such that 
    \begingroup
    \setlength{\abovedisplayskip}{2pt}
    \setlength{\belowdisplayskip}{2pt}
    \begin{equation*}
    \begin{aligned}
    \mathbb{P}\left\{\mathbb{E}_{\mathbb P_{st}}[h_{st}(\bm{\zeta}_{st}(\bm{x}))]\leq{J}_{st}\right\}\geq 1-\beta'
    \end{aligned}
    \end{equation*}
    \endgroup
    where $J_{st}$ represent the optimal value of problem (\ref{mr}), and $\beta'$ is defined as in (\ref{beta:sample variance}).
\end{theorem}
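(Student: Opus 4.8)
The plan is to combine the exact reformulation of Theorem~\ref{thm:mr} with the measure-concentration guarantee of Theorem~\ref{thm:sample variance}, exploiting the elementary fact that whenever the true distribution falls inside the ambiguity set, its expectation is dominated by the worst-case value $J_{st}$. By Theorem~\ref{thm:mr}, for the inner problem the coefficient vector is $\bm{a}=\bm{1}$ (since $L_1(\bm{\zeta}_{st}(\bm{x}))=\sum_w x_w\xi_{stw}$), so $J_{st}$ equals the worst-case expectation $\sup_{\mathbb{P}_{st}\in\mathcal{P}_{st}(\bm{x})}\mathbb{E}_{\mathbb{P}_{st}}[h_{st}(\bm{\zeta}_{st}(\bm{x}))]$ over the Wasserstein ball $\mathcal{P}_{st}(\bm{x})$ of radius $\varepsilon'_{st}(\bm{x})=\kappa_{st}\sqrt{\bm{x}^\top\widehat{\bm{\Sigma}}_{st}\bm{x}}$ centered at $\hat{\mathbb{P}}_{st}(\bm{x})$.

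First, I would fix the radius parameter $\kappa_{st}$ to the constant obtained by specializing the closed-form radius (\ref{radius:sample variance}) to $\bm{a}=\bm{1}$ and collapsing it via (\ref{radius2}); concretely, with $p$ and $N$ fixed this is $\kappa_{st}=\sqrt{3/N}\,\log(C_1/\beta)+(C_2/(2-p))^{1/p}/\sqrt{N}$. With this choice, the ambiguity-set radius used in (\ref{mr}) coincides exactly with the statistically valid radius $\varepsilon'_{st}(\bm{x})$ certified by Theorem~\ref{thm:sample variance}, so the event $\{W_p(\hat{\mathbb{P}}_{st}(\bm{x}),\mathbb{P}_{st}(\bm{x}))\leq\varepsilon'_{st}(\bm{x})\}$ is identical to the event $\{\mathbb{P}_{st}(\bm{x})\in\mathcal{P}_{st}(\bm{x})\}$.

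Next, on that event the true distribution is a feasible point of the supremum defining $J_{st}$, hence $\mathbb{E}_{\mathbb{P}_{st}}[h_{st}(\bm{\zeta}_{st}(\bm{x}))]\leq\sup_{\mathbb{Q}\in\mathcal{P}_{st}(\bm{x})}\mathbb{E}_{\mathbb{Q}}[h_{st}(\bm{\zeta}_{st}(\bm{x}))]=J_{st}$. This yields the set inclusion $\{\mathbb{P}_{st}(\bm{x})\in\mathcal{P}_{st}(\bm{x})\}\subseteq\{\mathbb{E}_{\mathbb{P}_{st}}[h_{st}(\bm{\zeta}_{st}(\bm{x}))]\leq J_{st}\}$. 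Applying monotonicity of probability to this inclusion and then invoking the complementary bound of Theorem~\ref{thm:sample variance}, namely $\mathbb{P}\{W_p(\hat{\mathbb{P}}_{st}(\bm{x}),\mathbb{P}_{st}(\bm{x}))\leq\varepsilon'_{st}(\bm{x})\}\geq 1-\beta'$, delivers $\mathbb{P}\{\mathbb{E}_{\mathbb{P}_{st}}[h_{st}(\bm{\zeta}_{st}(\bm{x}))]\leq J_{st}\}\geq 1-\beta'$, which is the claim.

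The argument is short because the heavy lifting has already been done in the earlier results; the only point requiring care is the matching in the first step. The main obstacle is to verify that the radius parameter $\kappa_{st}$ appearing in the tractable reformulation (\ref{radius2}) is exactly the one for which Theorem~\ref{thm:sample variance} certifies coverage at level $1-\beta'$ --- in particular that the specialization to $\bm{a}=\bm{1}$ for the inner problem is consistent between (\ref{mr}) and (\ref{radius:sample variance}), and that the ``sufficiently large $N$'' hypothesis inherited from Theorem~\ref{thm:sample variance} guarantees that $\varepsilon'_{st}(\bm{x})$ exceeds the threshold required by the underlying concentration inequality (\ref{mc}). Once this identification is secured, the event-inclusion and monotonicity steps are routine.
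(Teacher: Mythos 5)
Your proof is correct and follows essentially the same route as the paper's: invoke Theorem \ref{thm:sample variance} to get $\mathbb{P}\{\mathbb{P}_{st}(\bm{x})\in\mathcal{P}_{st}(\bm{x})\}\geq 1-\beta'$, then note that on this event the true expectation is dominated by the worst-case expectation, which Theorem \ref{thm:mr} identifies with $J_{st}$. Your version is merely more explicit than the paper's two-line argument, spelling out the choice $\kappa_{st}=\sqrt{3/N}\,\log(C_1/\beta)+(C_2/(2-p))^{1/p}/\sqrt{N}$ that makes the reformulation radius coincide with the statistically certified radius $\varepsilon'_{st}(\bm{x})$ --- a detail the paper leaves implicit in the phrase ``via the definition of $\varepsilon_{st}'(\bm{x})$.''
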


\subsection{Chance Constraints Approximation}
The distributionally robust chance-constrained model ensures that the probability of violating transmission line power flow constraints remains below a specified risk threshold for the worst-case distribution within the ambiguity set. Mathematically, the distributionally robust joint chance constraints for the transmission line power flow (\ref{cos:tlc}) can be written equivalently as follows: 
\begin{equation}\label{cvar1}
    \inf_{\mathbb{P}_{stl}(\bm{x})\in \mathcal{P}_{stl}(\bm{x})} \mathbb{P}_{stl}(\bm{x})\left[\max_{k\in \{1,2\}}l_{stlk}(\bm{\zeta}_{st}(\bm{x}))+l^0_{stlk}\leq  0 \right]\geq 1- \epsilon \quad\forall s,t,l
\end{equation}
where $l_{stlk}(\bm{\zeta}_{st}(\bm{x}))=\pm L_2(\bm{\zeta}_{st}(\bm{x}))$ for $k=1,2$, representing the directional power flow components, and $l^0_{stlk}$ denotes additional terms within each constraint that do not involve uncertain parameters.

Solving problems with exact chance constraints \citep{chen2024data} is computationally challenging, especially when extended to decision-dependent scenarios, as it involves bilinear terms. Additionally, in practice, violations of constraints (\ref{cos:tlc}) can incur costly risks, which must be factored into the decision-making process. To balance tractability and risk management, we adopt a risk-averse method, Conditional Value at Risk (CVaR), to quantify the magnitude of violations and provide a tractable approximation. The CVaR approximation of the chance constraints is expressed as:

\begin{equation}\label{cvar2}
    \sup_{\mathbb{P}_{stl}(\bm{x})\in \mathcal{P}_{stl}(\bm{x})} \text{CVaR}_{1-\epsilon}\left[\max_{k\in\{1,2\}} l_{stlk}(\bm{\zeta}_{st}(\bm{x}))+l^0_{stlk}\right]\leq 0 \quad\forall s,t,l
\end{equation}

As shown in \citet{chen2023approximations}, CVaR provides a tight convex approximation of chance constraints under the WDRO framework, making it a widely used approach in risk management. Furthermore, as a tool in optimization modeling, CVaR can be expressed as a convex minimization problem \citep{rockafellar2000optimization}:

\begin{equation*}
    \mathrm{CVaR}_{1-\epsilon}(\zeta)=\min\left\{\eta+\frac1{\epsilon}\mathbb{E}\left([\zeta-\eta]_+\right) : \eta\in\mathbb{R}\right\}\text{.} \label{cvar_min} 
\end{equation*}
This expression leads to a tractable reformulation of the distributionally robust CVaR constraints.

\begin{theorem}\label{thm:tlc}

Given a decision $\bm{x}$ and ambiguity set (\ref{ddas}) with the random variable $L_2(\bm{\zeta}_{st}(\bm{x}))$, the constraints (\ref{cvar2}) can be equivalently reformulated as

\begin{equation}\label{cvar3}
\left\{\begin{array}{l}
\eta_{stl} + \frac{1}{\epsilon}\left\{l^{0}_{stl,1}-\eta_{stl} + \kappa_{stl}\vartheta_{stl} + \frac1N\sum_i\sum_w\pi_{wl}x_w\hat\xi_{stwi}\right\} \leq 0 \\[2mm]
\eta_{stl} + \frac{1}{\epsilon}\left\{l^0_{stl,2}-\eta_{stl} + \kappa_{stl}\vartheta_{stl}-\frac1N\sum_i\sum_w\pi_{wl}x_w\hat\xi_{stwi}\right\} \leq 0 \\[2mm]
\eta_{stl} + \frac{1}{\epsilon}\kappa_{stl}\vartheta_{stl} \leq 0 \\[2mm]
\sqrt{(\bm{\pi}_{\cdot l}\cdot\bm{x})^\top\widehat{{\bm{\Sigma}}}_{st}(\bm{\pi}_{\cdot l}\cdot\bm{x})} \leq \vartheta_{stl}
\end{array} \right. \quad \quad \forall s,t,l
\end{equation}
where $\bm{\pi}_{\cdot l}:=(\pi_{wl})_{w \in [W]}$ and 
\begin{equation}
\left\{\begin{array}{l}
l^{0}_{stl,1} = \sum_g\pi_{gl}P_{stg}+[-\pi_{gl}]_+(\bar r_{stg} + \underline r_{stg})+\pi_{gl}\underline r_{stg}-\sum_{d}\pi_{dl}{\bar P}_{std} - \bar P_l \\[2mm]
l^{0}_{stl,2}= -\sum_{g}\pi_{gl}P_{stg}+[\pi_{gl}]_+(\bar r_{stg} + \underline r_{stg})-\pi_{gl}\underline r_{stg}+\sum_{d}\pi_{dl}{\bar P}_{std} - \bar P_l 
\end{array} \right. \quad \forall s,t,l
\end{equation}
\end{theorem}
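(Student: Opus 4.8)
The plan is to start from the worst-case CVaR constraint (\ref{cvar2}) and peel off the risk measure via its variational representation, then evaluate the remaining worst-case expectation in closed form through Wasserstein duality, and finally linearize the resulting piecewise-affine terms by an epigraph reformulation. Concretely, write $Y_{stl}(\bm{\zeta}_{st}(\bm{x})):=\max_{k\in\{1,2\}}\big(l_{stlk}(\bm{\zeta}_{st}(\bm{x}))+l^{0}_{stlk}\big)$ and recall the Rockafellar--Uryasev representation $\mathrm{CVaR}_{1-\epsilon}(Y)=\min_{\eta}\{\eta+\tfrac1\epsilon\mathbb{E}[(Y-\eta)_+]\}$ of \citet{rockafellar2000optimization}. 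Substituting this into (\ref{cvar2}) turns the constraint into $\sup_{\mathbb{P}}\min_{\eta}\{\eta+\tfrac1\epsilon\mathbb{E}_{\mathbb{P}}[(Y_{stl}-\eta)_+]\}\le 0$.

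The first technical step is to interchange $\sup_{\mathbb{P}}$ and $\min_\eta$. I would invoke Sion's minimax theorem: the map $(\eta,\mathbb{P})\mapsto \eta+\tfrac1\epsilon\mathbb{E}_{\mathbb{P}}[(Y_{stl}-\eta)_+]$ is convex in $\eta$ and linear—hence concave and weakly upper semicontinuous—in $\mathbb{P}$, while the decision-dependent Wasserstein ball $\mathcal{P}_{stl}(\bm{x})$ in (\ref{ddas}) is convex and weakly compact. This lets me rewrite the constraint as the existence of an auxiliary variable $\eta_{stl}$ with $\eta_{stl}+\tfrac1\epsilon\sup_{\mathbb{P}\in\mathcal{P}_{stl}(\bm{x})}\mathbb{E}_{\mathbb{P}}[(Y_{stl}(\bm{\zeta}_{st}(\bm{x}))-\eta_{stl})_+]\le 0$, which is precisely why $\eta_{stl}$ enters (\ref{cvar3}) as a free decision variable.

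The heart of the argument is the inner worst-case expectation. Here I would exploit that the ambiguity set acts on the scalar random variable $L_2(\bm{\zeta}_{st}(\bm{x}))=\sum_w\pi_{wl}x_w\xi_{stw}$, so that the integrand $\big(\max\{L_2+l^{0}_{stl,1},-L_2+l^{0}_{stl,2}\}-\eta_{stl}\big)_+=\max\{0,\,L_2+l^{0}_{stl,1}-\eta_{stl},\,-L_2+l^{0}_{stl,2}-\eta_{stl}\}$ is a maximum of affine functions of $L_2$ with slopes in $\{0,+1,-1\}$, hence globally $1$-Lipschitz. Applying the strong-duality worst-case expectation formula for $1$-Wasserstein balls (cf. \citet{mohajerin2018data,gao2024wasserstein}), the worst-case expectation equals the nominal expectation under the center plus the Lipschitz modulus times the radius; since the modulus is exactly $1$ and the radius is $\varepsilon'_{stl}(\bm{x})=\kappa_{stl}\vartheta_{stl}$ from (\ref{radius2}), this yields $\max\{0,\bar\omega_{stl}+l^{0}_{stl,1}-\eta_{stl},-\bar\omega_{stl}+l^{0}_{stl,2}-\eta_{stl}\}+\kappa_{stl}\vartheta_{stl}$, where $\bar\omega_{stl}=\tfrac1N\sum_i\sum_w\pi_{wl}x_w\hat\xi_{stwi}$ and $\vartheta_{stl}\ge\sqrt{(\bm{\pi}_{\cdot l}\cdot\bm{x})^\top\widehat{\bm{\Sigma}}_{st}(\bm{\pi}_{\cdot l}\cdot\bm{x})}$ is the SOC epigraph of the radius. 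The final step is routine epigraph linearization: the scalar inequality $\eta_{stl}+\tfrac1\epsilon\big(\max\{0,\cdot,\cdot\}+\kappa_{stl}\vartheta_{stl}\big)\le 0$ holds iff each of the three affine arguments meets the bound, which produces exactly the first three inequalities of (\ref{cvar3}), with the SOC constraint completing the system.

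I expect the main obstacle to be the inner worst-case-expectation evaluation together with its interaction with the minimax step: one must verify the regularity hypotheses of Sion's theorem on the decision-dependent ball, correctly identify the Lipschitz modulus as $1$ so that the radius enters (\ref{cvar3}) with unit coefficient, and confirm that the dual multiplier in the Wasserstein duality is attained at that modulus so no residual penalty term survives. A secondary point requiring care is that the two directional components $l_{stl1},l_{stl2}$ together with the hinge $(\cdot)_+$ generate precisely the three affine pieces, and that the empirical contribution collapses to the sample mean $\bar\omega_{stl}$, which must be reconciled with the chosen center of $\mathcal{P}_{stl}(\bm{x})$.
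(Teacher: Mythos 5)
Your route is structurally the same as the paper's. The paper's proof is exactly two steps: substitute the Rockafellar--Uryasev representation of CVaR into (\ref{cvar2}) and exchange the supremum and minimum, citing Proposition 3.4 of \citet{shapiro2001duality} (where you invoke Sion's minimax theorem — same purpose, and your convexity/concavity/compactness justification is adequate), and then evaluate the inner worst-case expectation by citing Corollary 5.1 of \citet{mohajerin2018data} with the scalar random variable $L_2(\bm{\zeta}_{st}(\bm{x}))$ (where you instead use the Lipschitz-regularization form of $1$-Wasserstein duality with modulus $1$; for a pointwise maximum of affine functions with slopes in $\{0,+1,-1\}$ these are the same result). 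So you have reconstructed the paper's citation-level proof in more detail.

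The one step that does not hold as written is the one you flag yourself: the ``collapse'' of the empirical contribution to the sample mean $\bar\omega_{stl}=\frac1N\sum_i\sum_w\pi_{wl}x_w\hat\xi_{stwi}$. With the ambiguity set centered at the empirical distribution $\hat{\mathbb{P}}_{stl}(\bm{x})=\frac1N\sum_i\delta_{L_{2,i}}$, $L_{2,i}:=\sum_w\pi_{wl}x_w\hat\xi_{stwi}$, the duality formula gives
\begin{equation*}
\sup_{\mathbb{P}\in\mathcal{P}_{stl}(\bm{x})}\mathbb{E}_{\mathbb{P}}\big[(Y_{stl}-\eta_{stl})_+\big]
=\frac1N\sum_i\max\big\{0,\,L_{2,i}+l^0_{stl,1}-\eta_{stl},\,-L_{2,i}+l^0_{stl,2}-\eta_{stl}\big\}+\kappa_{stl}\vartheta_{stl}\text{,}
\end{equation*}
which requires per-sample epigraph variables, not the three aggregate inequalities of (\ref{cvar3}); by Jensen's inequality, $\max\{0,\,\bar\omega_{stl}+l^0_{stl,1}-\eta_{stl},\,-\bar\omega_{stl}+l^0_{stl,2}-\eta_{stl}\}$ is only a lower bound on the sample average of the pointwise maximum, so pulling the mean inside the max yields a relaxation, not an equivalence. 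The collapse is legitimate only if the ball for the chance constraints is centered at the Dirac $\delta_{\bar\omega_{stl}}$ rather than at the empirical distribution. You should be aware, however, that this unresolved reconciliation is inherited from the paper itself: applying Corollary 5.1 of \citet{mohajerin2018data} verbatim, as the paper's proof instructs, also produces the per-sample form rather than (\ref{cvar3}), so the paper's stated theorem and its proof share exactly the leap you make — the difference being that you explicitly name it as a point requiring care.
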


\subsection{Model Simplification without Correlation} \label{subsec:uncorrelated}

In the previous formulation, the ambiguity set accounts for correlations among the wind power outputs of different wind farms through the sample covariance matrix $\widehat{{\bm{\Sigma}}}_{st}$. While incorporating correlation information can theoretically improve decision quality, estimating $\widehat{{\bm{\Sigma}}}_{st}$ accurately requires a large amount of high-quality data. In practice, however, such data may be unavailable or unreliable, especially in early-stage planning or in regions with limited historical records. Furthermore, the inclusion of correlation increases computational burden. To address these issues, we consider a simplified model that ignores correlation and only utilizes the marginal variance information.

Let $\hat{\sigma}^2_{stw}$ denote the sample variance of the wind power resource at $w$ under scenario $s$ and time $t$. By assuming independence across wind farms, the covariance matrix $\hat{{\bm{\Sigma}}}_{st}$ reduces to a diagonal matrix. Under this simplification, the radius constraints (\ref{reformulation:radius}) reduces to
\begin{equation}
\sqrt{\sum_{w} x_w^2 \hat{\sigma}^2_{stw}} \leq \vartheta_{st}\text{,}\label{radius_diag}
\end{equation}
and the radius constraint in (\ref{cvar3}) can be transformed analogously.

These reformulated constraints maintain the model’s tractability and robustness while significantly reducing the data requirement and computational burden. Despite ignoring correlation, the variance-only approach retains much of the critical uncertainty information. This simplification also avoids the potential risk of overfitting to noisy correlation estimates, which may lead to suboptimal or unstable planning decisions. Numerical results in Section~\ref{subsec:vvc} demonstrate that simplified model achieves a strong smoothing effect and stable performance, making it a practical and efficient alternative when correlation information is unavailable or unreliable.

\section{A Constraint Generation Based Solution Framework}\label{sec:cgbsf}

We start by providing the MISOCP reformulation of problem $P$,
\begin{align}
\begin{split} \label{reformulation:obj}
    RP:\quad \min \quad &\sum_w{c_wx_w} +\sum_s\Big\{\sum_{t}\sum_g\left[F_g(P_{stg}) + UR_{g}\bar r_{stg} + DR_{g}\underline r_{stg}\right] +  \\[-2mm]
    &\hspace{2cm}
    \sum_{t}\big[\phi_{st}\kappa_{st}\vartheta_{st} +\frac1N\sum_i(\sum_gz_{stgi}+WCw_{sti} + LSl_{sti})\big]\Big\}\Delta_s 
\end{split} \\
\text{s.t.} \quad &(\ref{cos:power balance})-(\ref{cos:ramp down limit}), (\ref{cos:non-negative})-(\ref{cos:integer}), (\ref{reformulation:radius}), (\ref{cvar3}) \\
&\left\{\begin{array}{l}\label{reformulation:empirical expectation}
DA_g\alpha_{stgi} \leq z_{stgi}  \quad \forall g,i\\[2mm]
-UA_g\alpha_{stgi} \leq z_{stgi} \quad \forall g,i \\[2mm]
\sum_g\alpha_{stgi} + w_{sti}-l_{sti} = \sum_wx_w(\hat\xi_{stwi}-\bar \xi_{stwi}) \quad \forall i  \\[2mm]
-\bar r_{stg} \leq \alpha_{stgi} \leq \underline r_{stg} \quad\quad\quad \forall g,i 
\end{array}\right. &\forall s,t
\end{align}

While the MISOCP reformulation can be solved using off-the-shelf solvers like Gurobi, computational time often poses a significant bottleneck in practical applications. Preliminary experiments reveal that directly solving the MISOCP reformulation with Gurobi fails to yield optimal solutions within an hour, even for small instances involving the planning of just five wind farms. This highlights the urgent need to accelerate the solution procedure. To tackle this challenge, we propose an efficient solution framework that leverages the structural properties of the MISOCP to improve computational performance.

Within the MISOCP model, the most computationally challenging components are the distributionally robust CVaR constraints related to transmission line capacity (\ref{cos:tlc}). These constraints involve numerous mixed-integer second-order cone conditions, substantially increasing computational burden, particularly in large-scale systems with many transmission lines. However, since most transmission lines typically have sufficient power flow capacity, many of these constraints are redundant. Therefore, we propose a constraint generation approach that dynamically adds these constraints during the solution process, which can greatly reduce the model size and decrease the computational time.

For simplicity, we define a set $\mathcal{L} = \{(s,t,l, k):s\in [S], t\in [T],l \in [L], k \in \{1,2\}\}$, where each tuple $(s,t,l,k)$ represents a transmission line $l \in [L]$ with its power flow direction $k$ in scenario $s$ and time period $t$. The constraint generation approach iteratively solves a relaxation of Problem $RP$, where the full set of transmission lines and directions $\mathcal{L}$ is replaced with a subset $\mathcal{L}' \subseteq \mathcal{L}$. Due to the relative stability of power flow directions, the subset $\mathcal{L}'$ includes only specific transmission lines with specific directions that have ever violated the distributionally robust CVaR constraints during the iterative process. These constraints can be equivalently evaluated as follows:
\begingroup
    \setlength{\abovedisplayskip}{5pt}
    \setlength{\belowdisplayskip}{10pt}
\begin{equation*}
    \sup_{\mathbb{P}_{stl}(\bm{x})\in \mathcal{P}_{stl}(\bm{x})} \text{CVaR}_\epsilon\left[l_{stlk}(\bm{\zeta}_{st}(\bm{x}))+l^0_{stlk}\right] = \text{CVaR}_\epsilon\left[l_{stlk}(\hat {\bm{\zeta}}_{st}(\bm{x}))+l^0_{stlk} + \frac{\varepsilon_{stl}'(\bm{x})}{\epsilon}\right]\text{.}
\end{equation*}
\endgroup
If the solution of the relaxed problem is feasible for all distributionally robust CVaR constraints, the solution procedure is terminated. The details are summarized in Algorithm \ref{Constraint Generation}. 
\begin{algorithm}[H]
\caption{Constraint Generation}\label{Constraint Generation}
\textbf{Input:} $\mathcal{L}' \gets \emptyset$
\begin{algorithmic}[1]
\State \textbf{Solve relaxed problem}
    \Statex \quad 1.1. Relax Problem $RP$ by replacing $\mathcal{L}$ with $\mathcal{L}'$.
    \Statex \quad 1.2. Solve the relaxation to obtain a solution $(\bm{x}', \dots)$.
\State \textbf{Check feasibility and add constraints}
    \Statex \quad 2.1. Given $\bm{x}'$, let $k\in\{1,2\}$ denote the different power flow directions.
    \Statex \quad 2.2. Find $\Phi=\left\{(s,t,l,k) : \text{CVaR}_\epsilon\left[l_{stlk}(\hat{\bm{\zeta}}_{st}(\bm{x}'))+l^0_{stlk} + \frac{\varepsilon_{stl}'(\bm{x}')}{\epsilon}\right]>0,s\in [S], t \in [T], l \in [L], k \in \{1, 2\}\right\}$.
    \Statex \quad 2.3. If $\Phi \neq \emptyset$, update $\mathcal{L}' \gets \mathcal{L}' \cup \Phi$ and return to step 1; otherwise, terminate.
\end{algorithmic}
\textbf{Output:} $(\bm{x}', \dots)$ as the optimal solution.
\end{algorithm}

\vspace{-0.5cm}

Empirically, after a single iteration of adding back a small number of constraints, the solution satisfies all CVaR constraints. This confirms that most constraints are indeed redundant and demonstrates the efficiency and practicality of the constraint generation approach. Detailed information on the improvement in solution time is provided in Section \ref{sec:ne}.

Within the relaxed problem, the empirical expectation in Problem $RP$ becomes computationally intensive as the sample size increases, as both the number of variables and constraints grows proportionally with the sample size. To further accelerate the solution process, we employ the L-shaped algorithm (See Appendix \ref{sec:la} for details), which leverages decomposition techniques to improve computational efficiency.

\section{Numerical Experiments}\label{sec:ne}

In this section, we evaluate the performance of the solution framework and the proposed decision-dependent distributionally robust optimization (DDRO) method on different datasets. We first introduce the benchmark methods as follows.

\begin{enumerate}[label=\Roman*.]
    \item Normalized distributionally robust optimization (NDRO) method: NDRO method models wind power resources at different locations as a decision-independent multivariate random variable. We consider the case where $p = 1$ and adopt the 2-norm in the Wasserstein distance. For consistency, we denote the Wasserstein radius as $\kappa'_{st}$. Based on Corollary 5.1 in \citet{mohajerin2018data}, the inner problem $SP_{st}$ can then be reformulated as:
    \begin{equation}\label{NDRO:mr}
    \max_{\mathbb{P}_{st}(\bm{x}) \in \mathcal{P}_{st}(\bm{x})} \mathbb{E}_{\mathbb{P}_{st}}\left[h_{st}(\bm{\zeta}_{st}(\bm{x}))\right] = \mathbb{E}_{\hat{\mathbb{P}}_{st}(\bm{x})} [h_{st}(\bm{\zeta}_{st}(\bm{x}))] + \phi_{st}\kappa^{\prime}_{st}\|\bm{x}\|_2 \text{.}
    \end{equation}
    \item Empirical optimization (EO) method: Different from DRO methods, EO treats the empirical distribution as the true distribution by setting $\varepsilon_{st}(\bm{x}) = 0, \forall s,t$. Thus, the worst-case expectation $\max_{\mathbb{P}_{st}(\bm{x}) \in \mathcal{P}_{st}(\bm{x})}\mathbb{E}_{\mathbb{P}_{st}(\bm{x})} h_{st}(\bm{\zeta}_{st}(\bm{x}))$ is replaced by the empirical expectation $\mathbb{E}_{\hat{\mathbb{P}}_{st}(\bm{x})} [h_{st}(\bm{\zeta}_{st}(\bm{x}))]$.
\end{enumerate}

For the hyperparameters of each method, namely $\kappa_{st}$ and $\kappa'_{st}$, we employed cross-validation to determine their optimal values. All experiments are conducted on an Intel Xeon E5-2620 v4 Processor with 128 GB memory. The experiments are coded using Python 3.11.3, with Gurobi 11.0.3 serving as the MISOCP solver.

\subsection{Data with Independent Marginals} \label{subsec:sd}

We conducted experiments using the classical IEEE 118-bus test system, a well-established benchmark in power system researches (\citealp{yin2022stochastic, yin2022coordinated}). This system consists of 118 buses, 186 transmission lines, 54 thermal generators, and 99 load buses. In our experiments, we assume a deterministic net load and make capacity decisions for wind farms located at 9 candidate nodes \{1, 13, 25, 37, 49, 51, 63, 75, 87\}. The unit cost of wind curtailment $WC$ and load shedding $LS$ are set to 100 and 200, respectively. The tolerance violation probability of chance constraints is specified as 0.1.

In this section, we focus on a single scenario and a single time period ($S=T=1$) for clarity of exposition. Nevertheless, the proposed model can be readily extended to multiple scenarios and time periods, which requires only additional scenario and temporal data as well as increased computational effort. We generate the dataset using the Weibull distribution, a widely used log-concave distribution in the literature for modeling thewind power resources (\citealp{bowden1983weibull, islam2011assessment}). We randomly pre-specify the mean of each wind power resource within the range $[0.96, 1.44]$ and the variance within $[0.0576, 0.1210]$. Then we calculate the corresponding scale and shape parameters of Weibull distributions for sampling. To examine the correlations among the sampled data, we compute the correlation coefficients across all wind farms and find that they are consistently close to zero. As a result, the experiments in this section utilize only variance information.

We first report the average computational time for three solution methods in Table \ref{tab:systhetic data 1} and Table \ref{tab:systhetic data 2}: direct computation (DC), the constraint generation alone (CG), and the constraint generation combined with the L-shaped algorithm (CG-L). The computational time is averaged over 10 experiments, varying the number of wind farms, sample sizes, and transmission line capacities.

\begin{table}[H]
\TABLE
{Average Computational Time (in Seconds) \label{tab:systhetic data 1}}
{\begin{tabular}{ccccccccccccc}
\hline
\textbf{} &     & \multicolumn{3}{c}{420}               &  & \multicolumn{3}{c}{380}               &  & \multicolumn{3}{c}{350}               \\ \cline{3-5} \cline{7-9} \cline{11-13} 
W        & N   & DC     & CG           & CG-L          &  & DC     & CG           & CG-L          &  & DC     & CG           & CG-L          \\ \cline{1-9} \cline{11-13} 
3         & 30  & 290.4  & \textbf{2.4} & 3.7           &  & 253.9  & \textbf{3.4} & 4             &  & 275.3  & \textbf{3.4} & 4.2           \\
          & 60  & 661    & 5.2          & \textbf{5}    &  & 601    & 9.3          & \textbf{5.9}  &  & 620    & 10.3         & \textbf{6}    \\
          & 90  & 1056.9 & 15.9         & \textbf{7}    &  & 1006.2 & 26.8         & \textbf{8.4}  &  & 1202.6 & 27.7         & \textbf{8.8}  \\
          & 120 & 1349.1 & 19.3         & \textbf{7.6}  &  & 1590.2 & 45.2         & \textbf{9}    &  & 1412.5 & 39.5         & \textbf{9.1}  \\
          & 150 & 1956.8 & 31           & \textbf{8.5}  &  & 1754.9 & 59.9         & \textbf{10.6} &  & 1853.3 & 59.2         & \textbf{12.3} \\
          & 180 & 2690.7 & 42.6         & \textbf{10.6} &  & 2391.2 & 91.6         & \textbf{12.8} &  & 2534.3 & 87.4         & \textbf{13}   \\
          & 240 & 3615.3 & 77.9         & \textbf{13.6} &  & 3837.2 & 161.7        & \textbf{17.1} &  & 3730   & 164.4        & \textbf{16.6} \\ \hline
\end{tabular}}
{}
\end{table}

\begin{table}[H]
    \TABLE
    {Average Computational Time (in Seconds)\label{tab:systhetic data 2}}
    {
    \centering
    \begin{tabular}{cccccccccc}
    \hline
    \multicolumn{1}{l}{\textbf{}} & \textbf{} & \multicolumn{2}{c}{420}                           & \textbf{} & \multicolumn{2}{c}{380}                           & \textbf{} & \multicolumn{2}{c}{350}                           \\ \cline{3-4} \cline{6-7} \cline{9-10} 
    W                             & N         & \multicolumn{1}{c}{CG} & \multicolumn{1}{c}{CG-L} & \textbf{} & \multicolumn{1}{c}{CG} & \multicolumn{1}{c}{CG-L} & \textbf{} & \multicolumn{1}{c}{CG} & \multicolumn{1}{c}{CG-L} \\ \hline
    5                             & 30        & \textbf{3.8}           & 5.3                      & \textbf{} & \textbf{4.5}           & 5.6                      &           & \textbf{5.7}           & 6                        \\
                                  & 60        & \textbf{6.7}           & 7.7                      &           & 10.4                   & \textbf{9.4}             &           & 14.4                   & \textbf{8}               \\
                                  & 90        & 16.9                   & \textbf{8.9}             &           & 28.1                   & \textbf{9.1}             &           & 33.4                   & \textbf{10.4}            \\
                                  & 120       & 37.1                   & \textbf{10.9}            &           & 61.6                   & \textbf{12.5}            &           & 64.6                   & \textbf{11.9}            \\
                                  & 150       & 55.8                   & \textbf{12}              &           & 87.9                   & \textbf{13.7}            &           & 277.4                  & \textbf{14}              \\
                                  & 180       & 74.6                   & \textbf{15.1}            &           & 133.1                  & \textbf{16.8}            &           & 142.7                  & \textbf{15.2}            \\
                                  & 240       & 180.1                  & \textbf{19.1}            &           & 337.4                  & \textbf{19.2}            &           & 382.2                  & \textbf{18.5}            \\ \hline
    7                             & 30        & \textbf{3.1}           & 7.6                      &           & \textbf{3.9}           & 8.2                      &           & \textbf{5.6}           & 8.4                      \\
                                  & 60        & 18.3                   & \textbf{9.9}             &           & 30.6                   & \textbf{10.3}            &           & 31.8                   & \textbf{11.7}            \\
                                  & 90        & 20.7                   & \textbf{14.1}            &           & 34.4                   & \textbf{14.6}            &           & 38.7                   & \textbf{15.3}            \\
                                  & 120       & 44.7                   & \textbf{17.6}            &           & 74.6                   & \textbf{18.9}            &           & 82.3                   & \textbf{17.7}            \\
                                  & 150       & 91                     & \textbf{22}              &           & 140.1                  & \textbf{21.1}            &           & 154.7                  & \textbf{21.1}            \\
                                  & 180       & 140.6                  & \textbf{21.4}            &           & 220.5                  & \textbf{23}              &           & 254.9                  & \textbf{23.4}            \\
                                  & 240       & 258.6                  & \textbf{27.7}            &           & 466.1                  & \textbf{28.5}            &           & 551.3                  & \textbf{28.2}            \\ \hline
    9                             & 30        & \textbf{2.9}           & 16                       &           & \textbf{3.3}           & 18.2                     &           & \textbf{5.6}           & 18.8                     \\
                                  & 60        & \textbf{9.1}           & 16.4                     &           & \textbf{9.3}           & 17.8                     &           & 24.5                   & \textbf{18}              \\
                                  & 90        & \textbf{24}            & 29.8                     &           & \textbf{24.4}          & 29.6                     &           & 43.2                   & \textbf{33}              \\
                                  & 120       & 45.4                   & \textbf{34.2}            &           & 43.9                   & \textbf{33.6}            &           & 97.5                   & \textbf{34.6}            \\
                                  & 150       & 70.6                   & \textbf{32.4}            &           & 74.3                   & \textbf{32.8}            &           & 146.6                  & \textbf{33}              \\
                                  & 180       & 147.4                  & \textbf{37.6}            &           & 148                    & \textbf{37.3}            &           & 245.8                  & \textbf{41.4}            \\
                                  & 240       & 244.6                  & \textbf{45.2}            &           & 244                    & \textbf{44.8}            &           & 568.1                  & \textbf{47.5}            \\ \hline
    \end{tabular}}
    {}
\end{table}
\vspace{-0.3cm}

Due to the complexity of MISOCP, the computational time for DC increases rapidly as the number of wind farms and sample size grow. Notably, DC fails to deliver an optimal solution within one hour when the number of wind farms increases to 5, even with only 30 training samples. In contrast, both CG and CG-L consistently and substantially improve computational performance. Therefore, for larger problem instances, we limit the computational performance tests to these two methods. For smaller sample sizes, CG achieves the optimal decision in less time, as the iterative nature of the L-shaped algorithm (CG-L) offers limited benefits in such cases. However, as the sample size increases, the L-shaped algorithm (CG-L) significantly improves computational efficiency and leads to a much slower growth rate of training time compared to CG. This enhancement is particularly important for large-scale power system analyses, where computational demands can become substantial. Additionally, in most instances, a reduction in transmission line capacity results in increased computational time. This is expected, as lower capacity raises the likelihood of transmission line violations concerning the distributionally robust CVaR constraints. When violations occur, additional constraints must be reintroduced into the relaxed model to ensure reliability and compliance with operational standards. Nonetheless, empirical observations indicate that only a small number of lines need to be added back, often in a single iteration, to achieve the optimal decision. This finding highlights the efficiency and practicality of the constraint generation approach in power systems.

In the following, we evaluate the model’s performance. All optimization models are solved based on a sample size 60. The out-of-sample performance of the planning decisions obtained from each method is assessed using a test set of size 3000. For each instance, we solve the DDRO/NDRO model across a sequence of parameters $\kappa/\kappa'$, with EO included when $\kappa=\kappa^\prime=0$. 

\begin{remark}
    We emphasize that DDRO outperforms benchmark methods in terms of total cost, but the differences are marginal—typically less than 1\%, which is consistent with prior studies in multi-farm planning(\citealp{yin2022stochastic,yin2022coordinated}). This is primarily because investment and generation costs account for the vast majority of total cost, whereas the risk management cost (including reserve, adjustment, curtailment, and load shedding) constitutes only a small portion. Therefore, differences in the models’ ability to hedge against operational risk are largely masked in the total cost. To more clearly highlight the advantages of our model in risk management and ensure a fair comparison across different models, we standardize the unit investment cost across all wind farms and adjust parameters in the day-ahead scheduling stage. Under these settings, if the total installed capacity $\sum_w x_w = X$ is fixed, then any differences in risk management cost directly correspond to differences in total cost. As a result, We shift focus to the risk management, including reducing operational risks and mitigating fluctuations in the power system.  
    Further implementation details are presented in Appendix \ref{sec:nr}.
\end{remark}

\begin{figure}[H]
    \FIGURE
    {% Subcaptions and Graphics
    \subcaptionbox{$X=400$}
    {\includegraphics[width=0.3\textwidth]{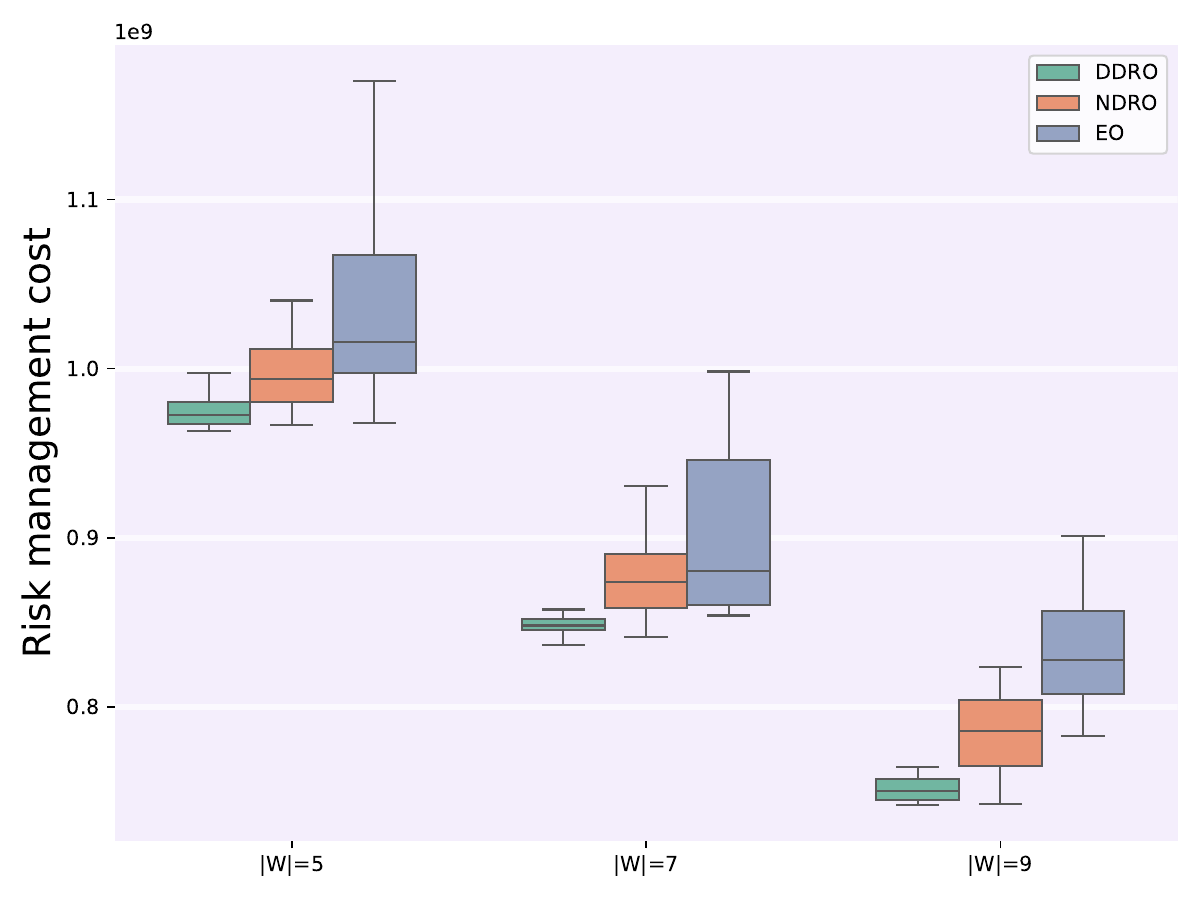}}
    \subcaptionbox{$X=600$}
    {\includegraphics[width=0.3\textwidth]{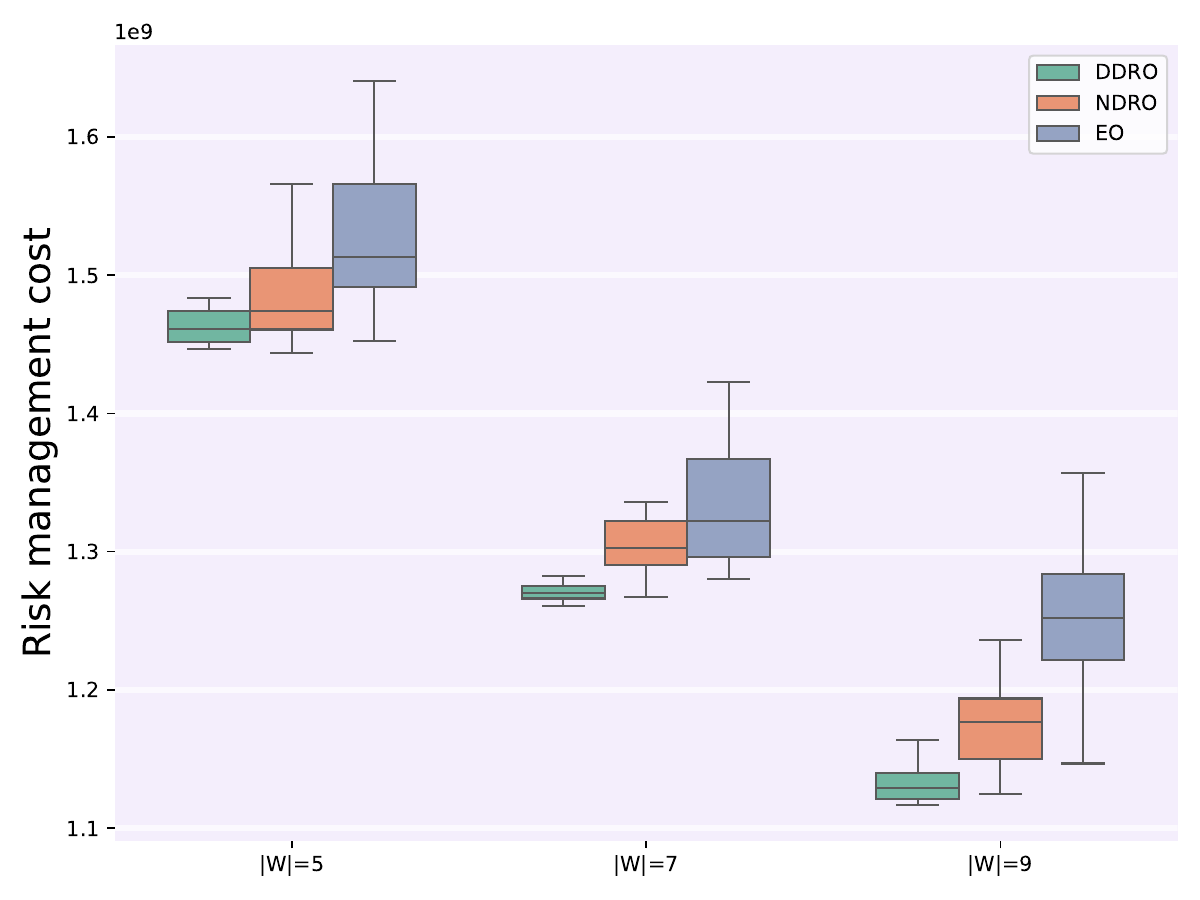}}
    \subcaptionbox{$X=900$}
    {\includegraphics[width=0.3\textwidth]{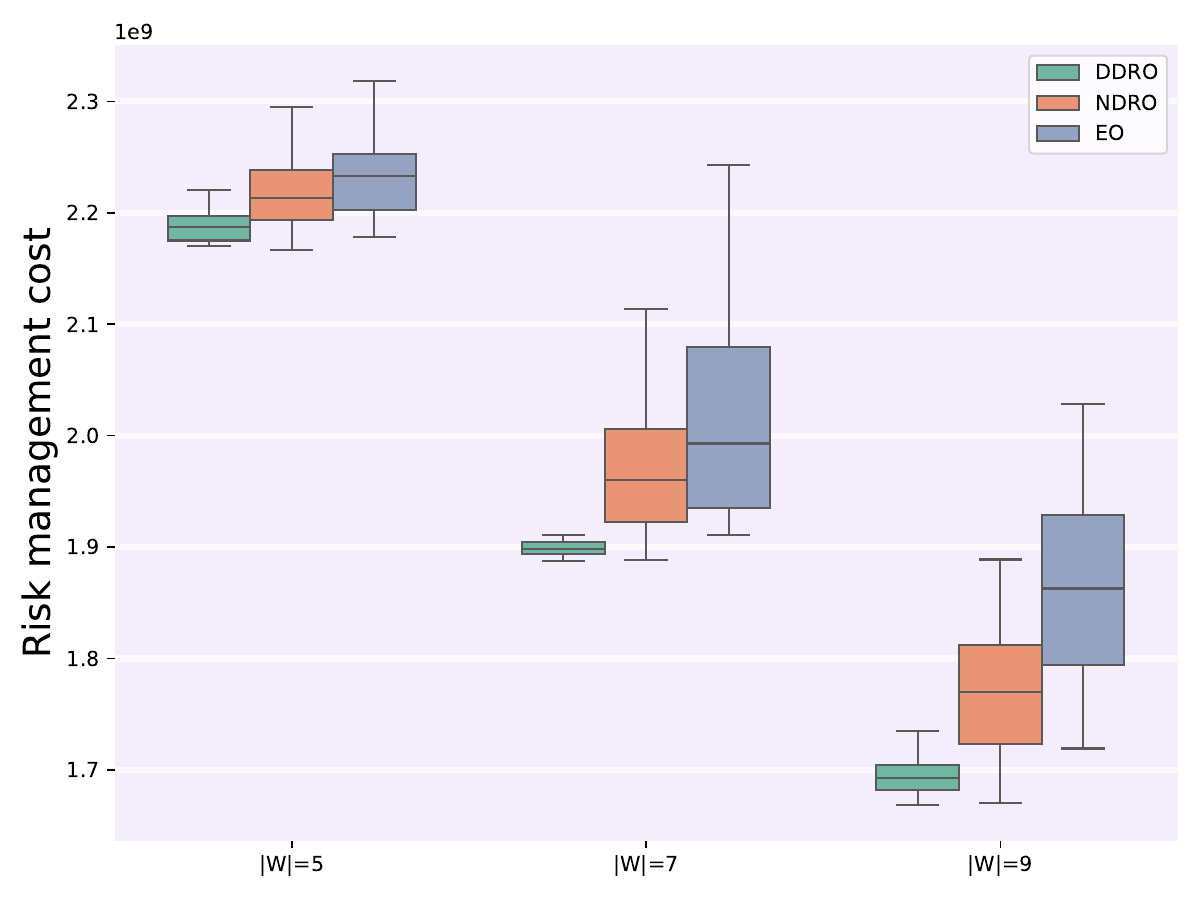}}
    }
    {% Main caption
    Box Plots of Out-of-sample Risk Management Cost Using Different Methods  \label{fig:synthetic_data_boxplot}}
    {}
\end{figure}

\begin{table}[H]
    \TABLE
    {Average Risk Management Cost ($10^8$) \label{tab:systhetic data mean}}
    {\begin{tabular}{cccccccc}
    \hline
    $X$         &  W             &  & DDRO           &  & NDRO          &  & EO    \\ \hline
    400     & 5       &  & \textbf{9.75}  &  & 10.04          &  & 10.32  \\
            & 7       &  & \textbf{8.48}  &  & 8.81           &  & 9.13  \\
            & 9       &  & \textbf{7.51}  &  & 7.98           &  & 8.35  \\
    600     & 5       &  & \textbf{14.63} &  & 15.02          &  & 15.40 \\
            & 7       &  & \textbf{12.69} &  & 13.12          &  & 13.56 \\
            & 9       &  & \textbf{11.34} &  & 11.82          &  & 12.58 \\
    900     & 5       &  & \textbf{21.89} &  & 22.41          &  & 22.43 \\
            & 7       &  & \textbf{18.97} &  & 19.82          &  & 20.51 \\
            & 9       &  & \textbf{16.98} &  & 17.91          &  & 18.74 \\ \hline
    \end{tabular}}
    {}
\end{table}

To examine the impact of target installed capacity, we vary $X \in \{400, 600, 900\}$. Additionally, to investigate the effect of the number of wind farm, we vary $W \in \{5, 7, 9\}$. For consistency, each method uses the same training and testing datasets, and 20 repeated experiments are conducted. The out-of-sample risk management costs are visualized as box plots in Figure \ref{fig:synthetic_data_boxplot}, while the average out-of-sample costs are summarized in Table \ref{tab:systhetic data mean}. Based on these results, we have the following observations:

\begin{enumerate}[label=\Roman*.] % 使用 \Roman* 表示大写罗马数字
    \item In general, DDRO demonstrates superior performance, particularly in regimes with less wind farms ($\text{W} = 5$) and larger target capacity ($X = 600, 900$), compared to NDRO. This superiority arises from DDRO’s ability to incorporate variance information, enabling it to account for fluctuations across different locations and allocate capacity more precisely. Moreover, DDRO aligns with the development trends of wind energy systems, where larger scales benefit more significantly from its robust and variance-informed optimization framework. 
    \item Under the same target capacity, experiments involving more wind farms exhibit lower risk management costs. This finding highlights the promising potential of distributed renewable energy systems, as increasing the number of wind farms under a fixed capacity target enhances the smoothing effect, thereby enabling more efficient and resilient risk management.
    \item Compared to DDRO and NDRO, the EO method performs the worst and exhibits the highest variability, as indicated by its larger interquartile range in the box plots. These observations highlight the superior risk-aversion capability of DRO-based models.
\end{enumerate}

\begin{figure}[htbp]
    \FIGURE
    {% Subcaptions and Graphics
    \subcaptionbox{$X=400$}
    {\includegraphics[width=0.3\textwidth]{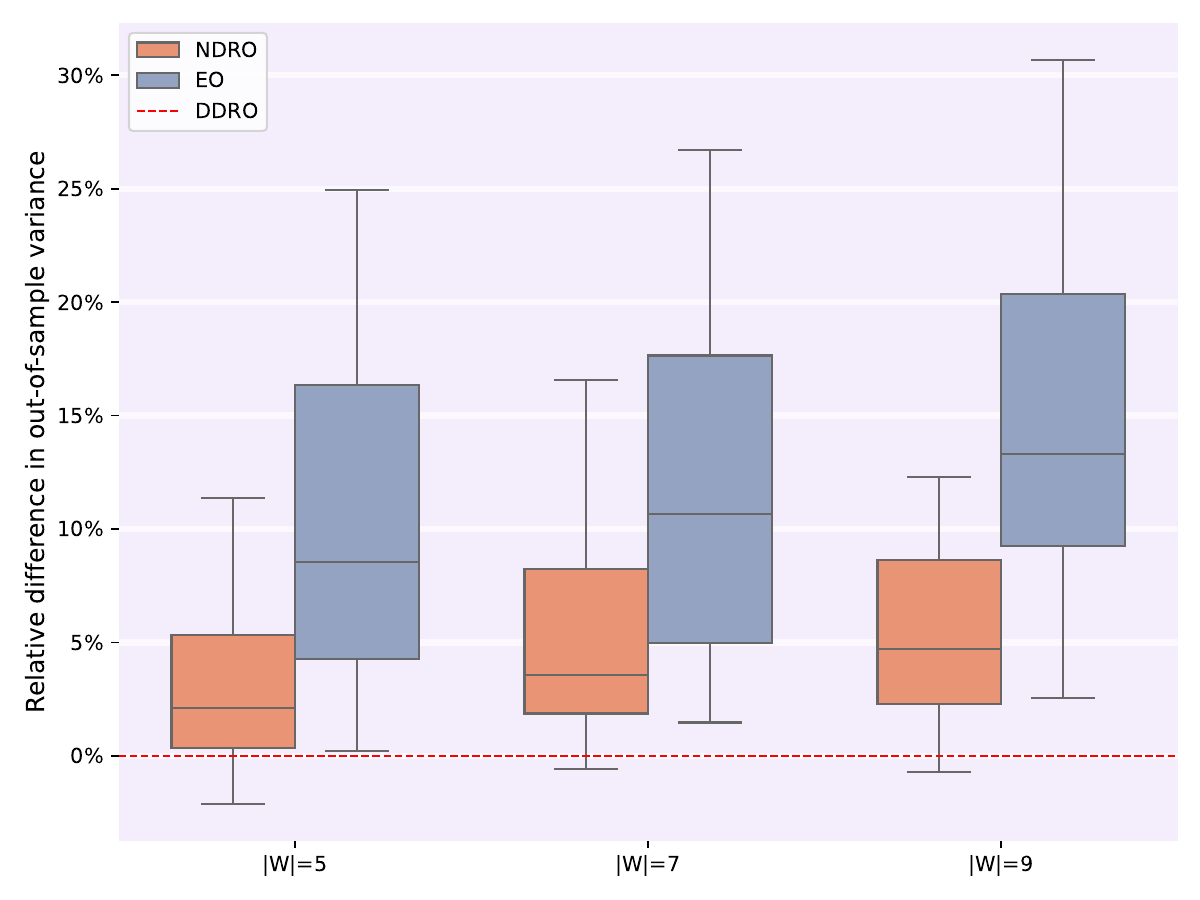}}
    \subcaptionbox{$X=600$}
    {\includegraphics[width=0.3\textwidth]{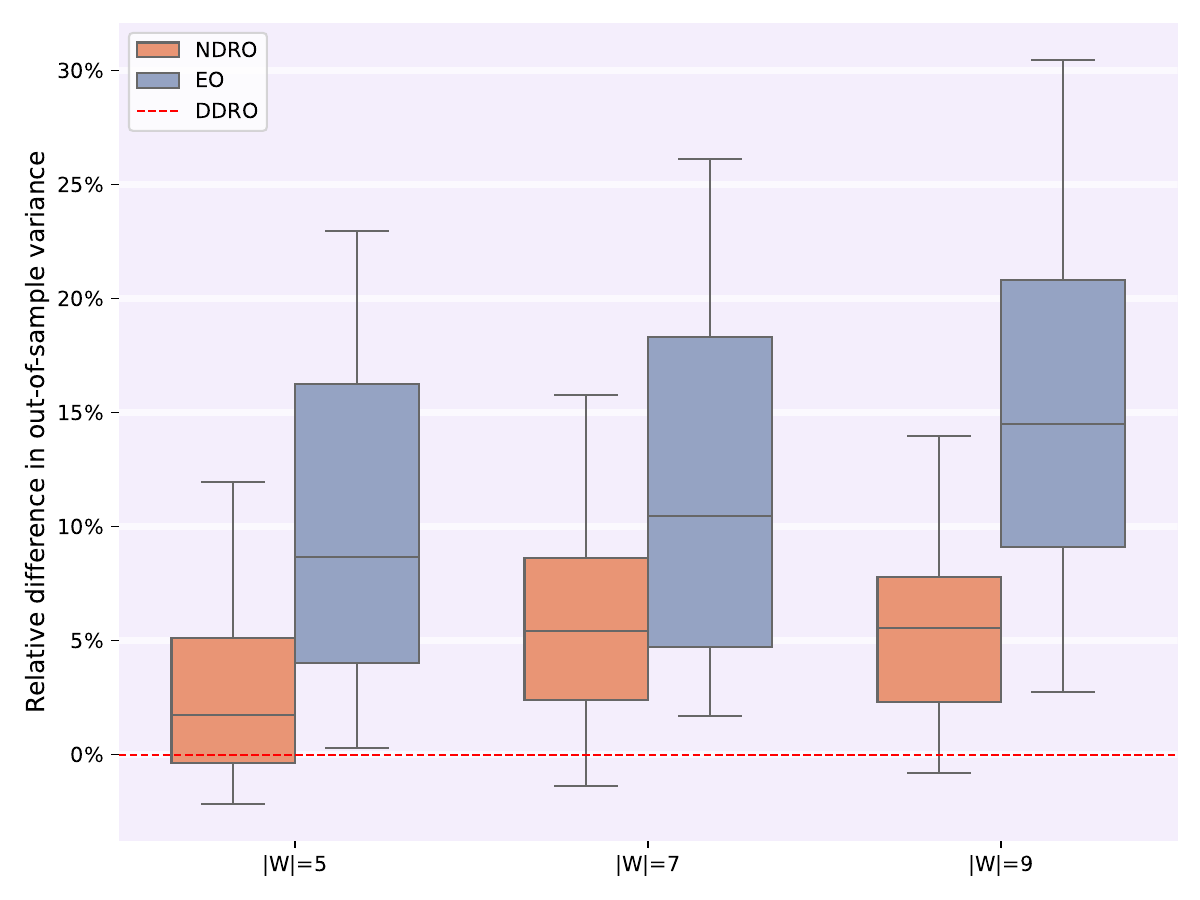}}
    \subcaptionbox{$X=900$}
    {\includegraphics[width=0.3\textwidth]{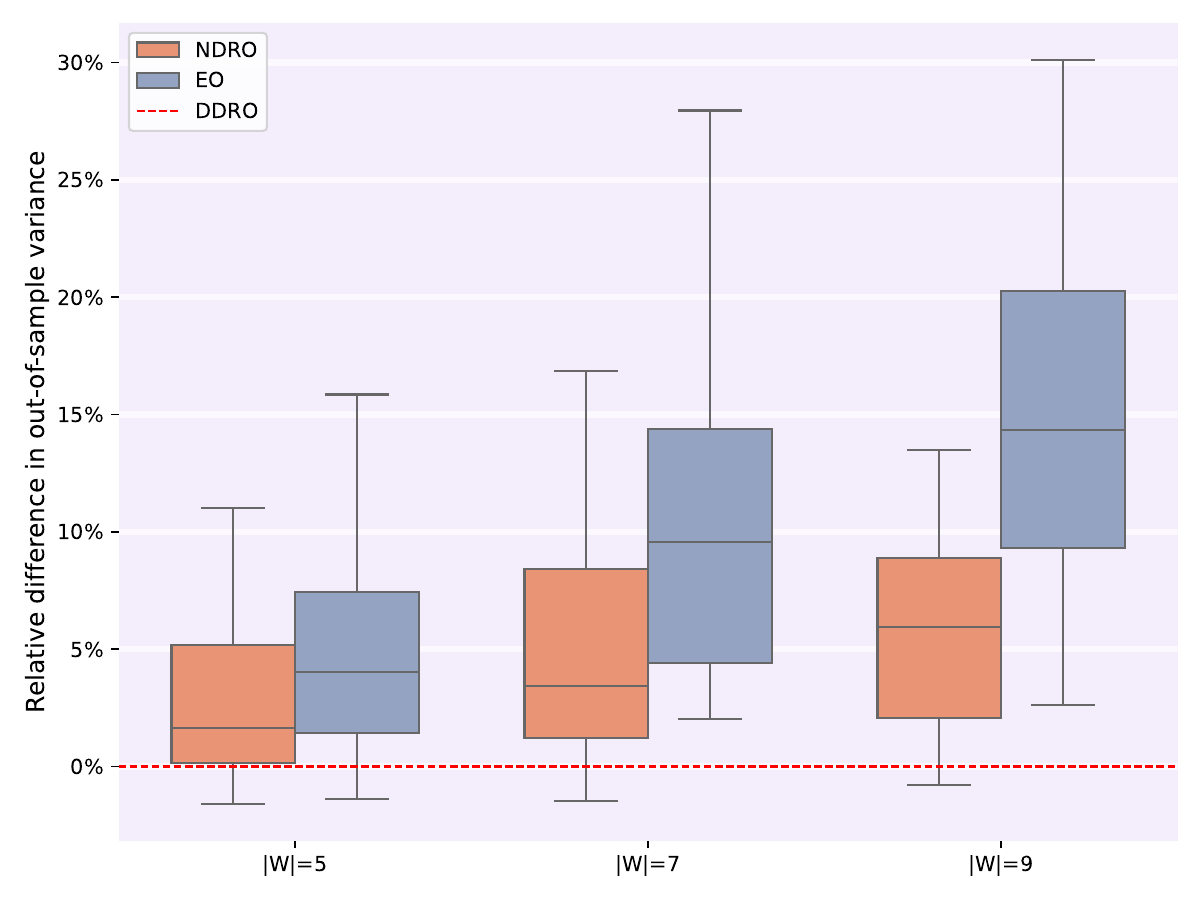}}
    }
    {% Main caption
    Box Plots of Out-of-sample Smoothing Performance Using Different Methods  \label{fig:synthetic_data_var_boxplot}}
    {}
\end{figure}

Additionally, we use the out-of-sample variance of the aggregated wind power output to quantify the smoothing performance. The relative differences in out-of-sample smoothing performance are illustrated in Figure~\ref{fig:synthetic_data_var_boxplot}, where positive values indicate that DDRO achieves a certain percentage reduction in variance compared to benchmark methods. The results highlight that DDRO’s capacity allocation decisions consistently result in smaller variances and more stable aggregated wind power generation. This provides a clear and intuitive demonstration of DDRO’s advantages, showcasing its ability to effectively leverage the wind power smoothing effect to mitigate fluctuations more efficiently than benchmark methods. Moreover, the improvement in smoothing performance becomes more pronounced as the number of wind farms increases, demonstrating the model’s strong potential for application in the context of distributed wind energy development.

\subsection{Data with Correlated Structures}\label{subsec:rd}

In this section, we evaluate the performance of the proposed methods using real wind power resource data, as mentioned in Section \ref{subsec:eadi}. We make capacity decisions for 4 wind farms located at nodes \{37, 49, 51, 63\}, with the data upscaled to match the IEEE-118 bus system. All other experimental settings remain consistent with those used in the synthetic data experiments.

We first assess the average computational performance of the three solution methods (DC, CG, and CG-L). The results, as presented in Table \ref{tab:real data}, are generally consistent with those from the synthetic experiments. The application of CG and CG-L significantly accelerates the solving process. However, the real data experiments reveal that CG alone is less effective, particularly as the problem complexity increases. In contrast, the combined approach of constraint generation and the L-shaped algorithm (CG-L) demonstrates greater stability and reliability, consistently delivering solutions within reasonable timeframes. This highlights CG-L as the preferred method for handling large-scale and complex problem instances in real-world applications.

\vspace{-0.5cm}
\begin{table}[H]
    \TABLE
    {Average Computational Time (in Seconds)\label{tab:real data}}
    {
    \centering
    \begin{tabular}{ccccccccccccc}
    \hline
               &           & \multicolumn{3}{c}{420}                &           & \multicolumn{3}{c}{380}                &           & \multicolumn{3}{c}{350}               \\ \cline{3-5} \cline{7-9} \cline{11-13} 
    W & N         & DC       & CG            & CG-L          &           & DC      & CG            & CG-L           &           & DC        & CG            & CG-L           \\ \hline
    4   & 30      & 384.64   & \textbf{4.37} & 5.75          & \textbf{} & 352.77  & \textbf{4.91} & 7.52           & \textbf{} & 328.30    & \textbf{4.52} & 6.87           \\
        & 50      & 969.04   & 30.51         & \textbf{8.98} & \textbf{} & 903.05  & 48.19         & \textbf{12.42} & \textbf{} & 952.76    & 42.88         & \textbf{12.26} \\
        & 80      & 1739.12  & 86.58         & \textbf{12.10} & \textbf{} & 1796.13 & 123.27        & \textbf{15.13} & \textbf{} & 1665.09   & 114.05        & \textbf{13.82}  \\ \hline
    \end{tabular}}
    {}
\end{table}
\vspace{-0.5cm}

\vspace{-0.5cm}
\begin{figure}[H]
\FIGURE
{% Subcaptions and Graphics
\subcaptionbox{Risk Management Cost \label{fig:real_data_boxplot}}
{\includegraphics[width=0.32\textwidth]{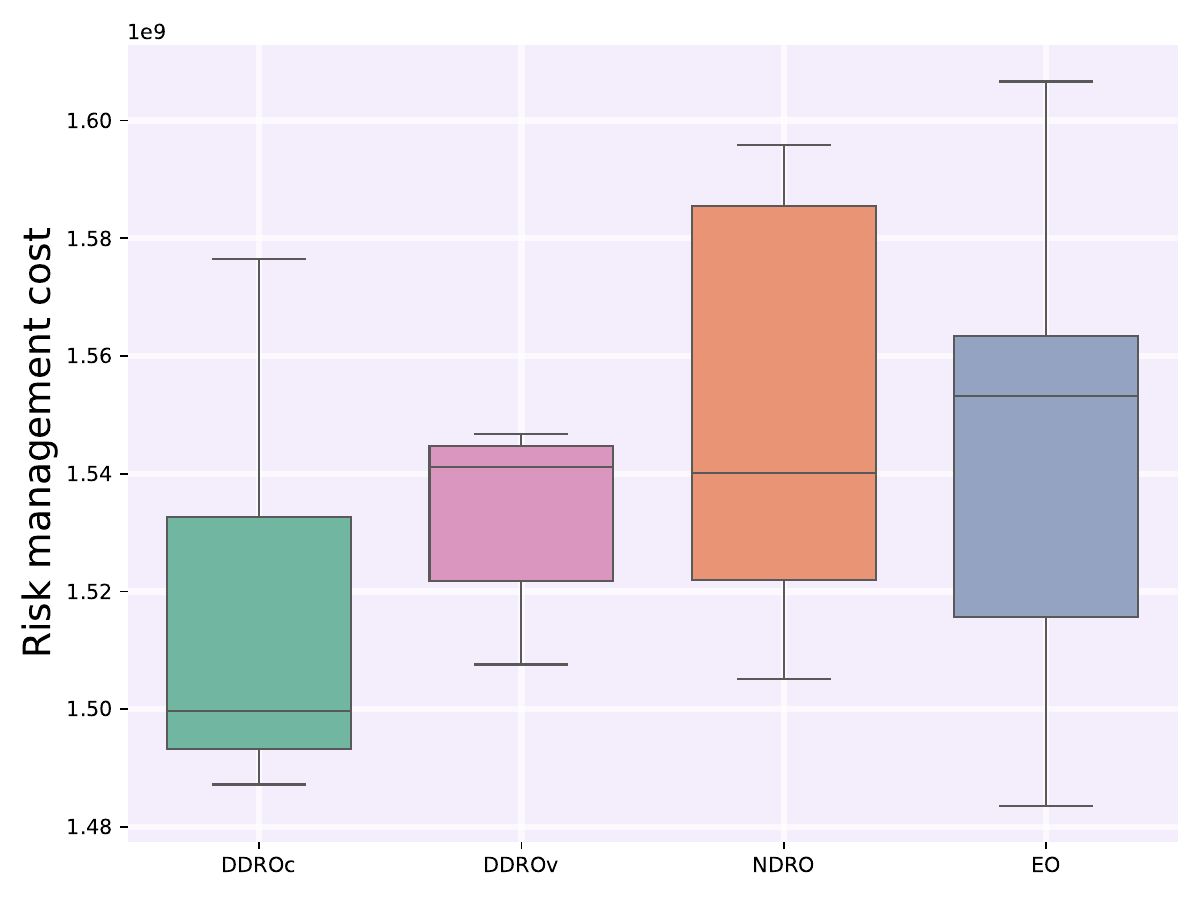}}
\subcaptionbox{Smoothing Performance \label{fig:real_data_var_boxplot}}
{\includegraphics[width=0.32\textwidth]{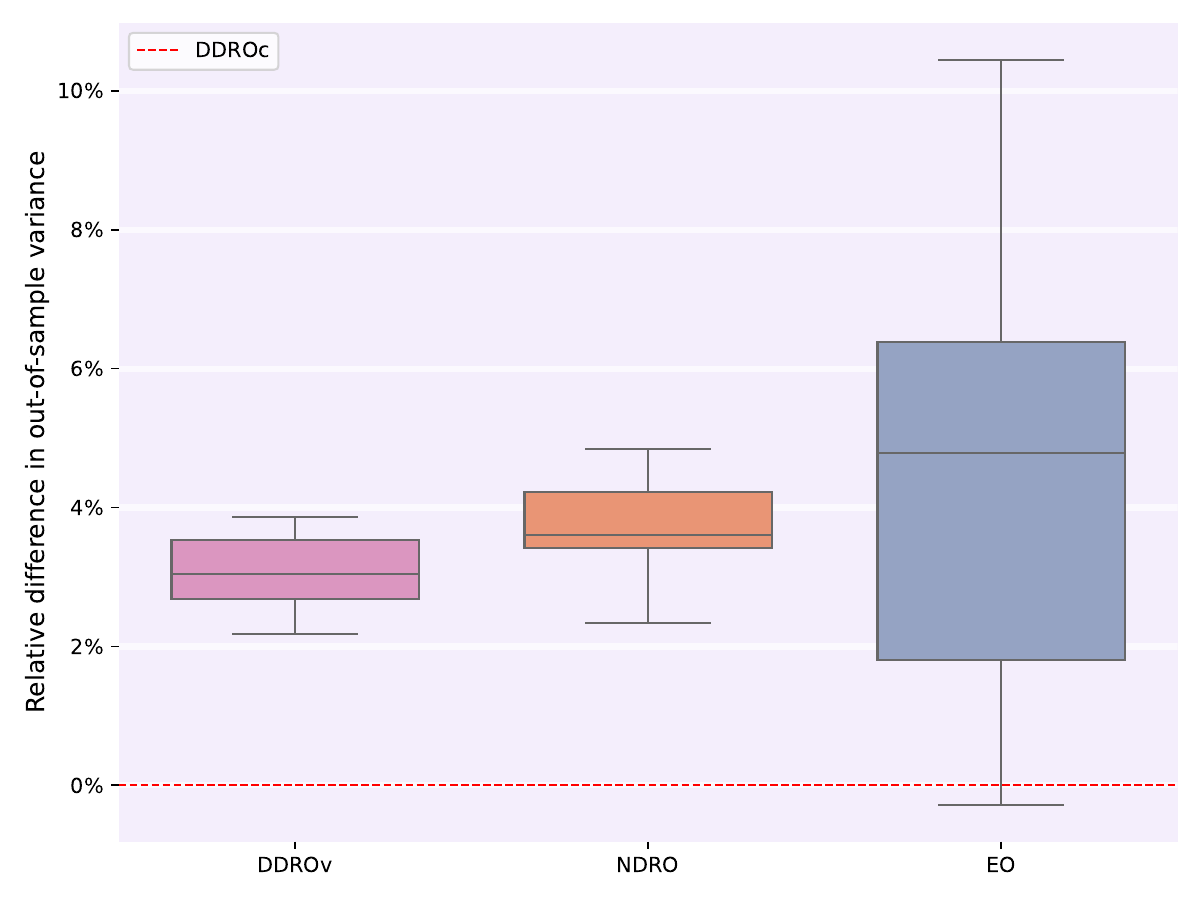}}
}
{% Main caption
Box Plots of Out-of-sample Metrics Using Real Data\label{fig:real_data_plot}}
{}
\end{figure}
\vspace{-0.5cm}

Due to the limited availability of real data, we generate datasets that match the marginal distributions and correlation structure of the real data (see Appendix \ref{sec:dg}), with 3000 samples for training and another 3000 samples for testing. We fix the total installed capacity at $X=500$. 
For the DDRO method, we consider two variants: one that incorporates only variance information (denoted as DDROv), and another that includes both variance and correlation information (denoted as DDROc).
For each run, we randomly select 60 samples from the training dataset to train the model. 
The out-of-sample metric is defined as the average risk management cost over the entire testing dataset. We conduct 20 repeated experiments and visualize the out-of-sample risk management costs as box plots in Figure \ref{fig:real_data_plot}.

Figure~\ref{fig:real_data_boxplot} presents the results for risk management cost, highlighting DDRO’s robust performance under real-world conditions. Specifically, both DDROc and DDROv outperform the benchmark methods NDRO and EO in terms of median cost, reflecting the superior risk-hedging capability of decision-dependent ambiguity sets. Notably, while DDROc exhibits greater variability, it achieves the lowest median cost among all methods, indicating that incorporating correlation information can enhance risk management. However, the wider spread also reveals a trade-off: when correlation estimates are imprecise, their inclusion may introduce volatility into decision outcomes. This underscores the practical consideration that model sophistication should be matched with data quality.

To further evaluate model performance from a system-level stability perspective, Figure~\ref{fig:real_data_var_boxplot} shows the relative differences in out-of-sample variance of the aggregated wind power generation. The results are consistent with those observed in the synthetic experiments and demonstrate that DDRO-based models consistently yield lower variance. In particular, both DDROc and DDROv outperform NDRO and EO, with DDROc achieving the best smoothing performance overall. 

Interestingly, while risk management cost and smoothing performance are generally aligned, the variability across methods reveals important distinctions. In particular, DDROc exhibits relatively large fluctuations in risk management cost but maintains consistently low variance in smoothing performance. This suggests that, although incorporating correlation information may lead to instability in cost outcomes—especially under estimation errors—it remains effective in ensuring stable aggregated wind output. Conversely, EO shows high variability in both metrics, indicating that its performance is more susceptible to uncertainty, both in terms of cost and output stability.
This divergence highlights a key managerial insight: minimizing expected cost is not sufficient for ensuring robust and reliable system performance. Instead, smoothing performance—reflected by output variance—is more directly tied to capacity allocation decisions and long-term operational resilience. As such, we place greater emphasis on smoothing performance when evaluating model effectiveness. The consistently strong variance reduction achieved by DDRO models, particularly under real data, underscores their value in managing uncertainty and promoting stable integration of renewable energy in large-scale systems.

\subsection{Value of Variance and Correlation in the Smoothing effect}\label{subsec:vvc}

The regularization terms in DDRO (\ref{radius2}) and the benchmark NDRO (\ref{NDRO:mr}) inherently sensitize capacity allocation decisions to the statistical properties of wind power resources. Specifically, DDROv becomes equivalent to NDRO when the variances of wind power resources are uniform across sites, underscoring the unique contribution of DDRO in leveraging site-specific variance information. Similarly, DDROc converges to DDROv when inter-farm correlations are nullified, highlighting the distinct role of correlation. 

To systematically investigate the marginal value of these two crucial distributional features-heterogeneity in site-specific variances and inter-farm correlation-we design experiments that allow us to modulate their influence. We quantify the heterogeneity in variances by considering the variance of the site-specific variances themselves; a higher variance of variances indicates greater diversity in the risk profiles of individual sites. Inter-farm correlation is directly represented by the off-diagonal elements of the covariance matrix. We then introduce a scaling parameter, $\omega$, which simultaneously multiplies both the deviations of individual site variances from their mean (thus amplifying or dampening the heterogeneity of variances) and the off-diagonal covariance terms (thus strengthening or weakening the inter-farm correlations). We use synthetic datasets based on a multivariate normal distribution, and out-of-sample variance of aggregated wind power output is used as a proxy for smoothing performance, with experimental settings aligned with Section~\ref{subsec:rd}.

\begin{figure}[H]
    \FIGURE
    {% Subcaptions and Graphics
    \subcaptionbox{Sample Variance \label{fig:dla_var_sample}}
    {\includegraphics[width=0.4\textwidth]{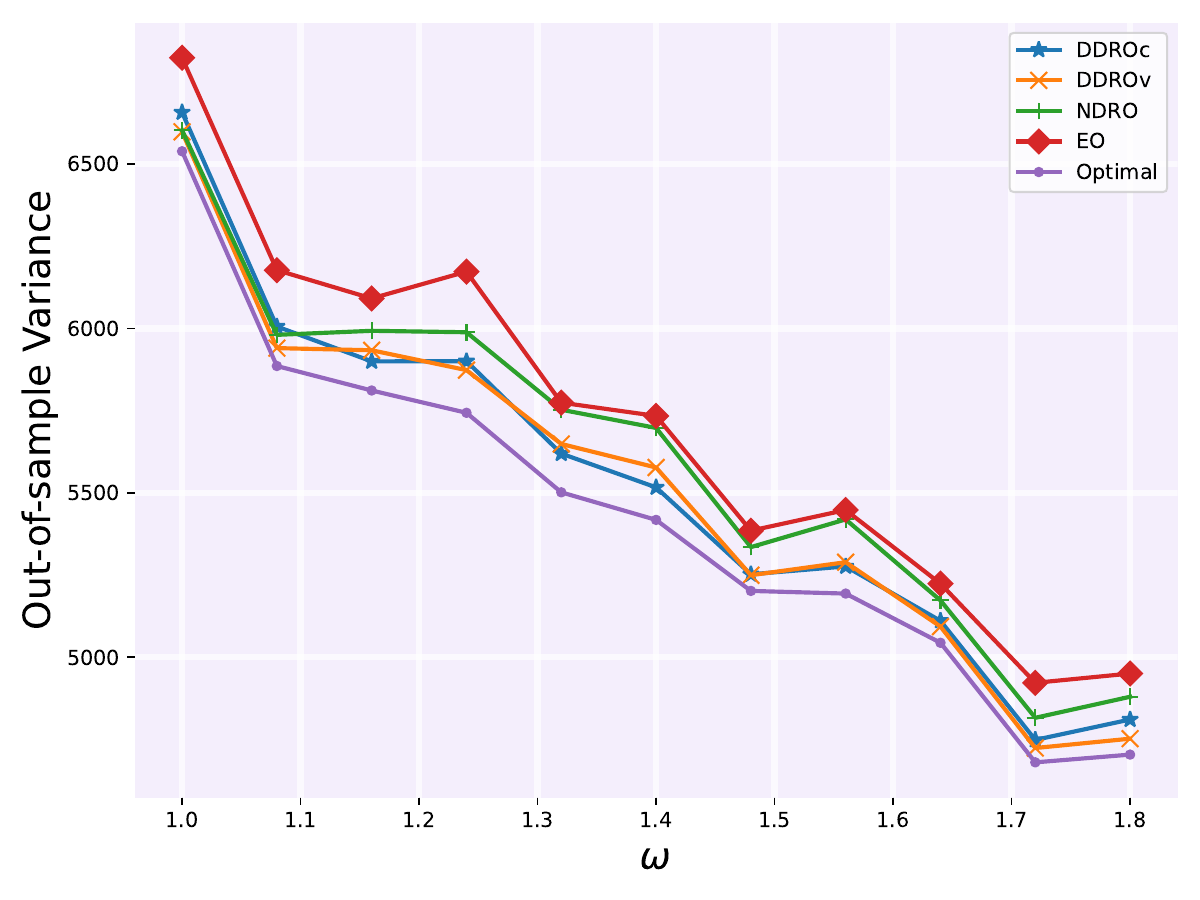}}
    \subcaptionbox{True Variance \label{fig:dla_var_true}}
    {\includegraphics[width=0.4\textwidth]{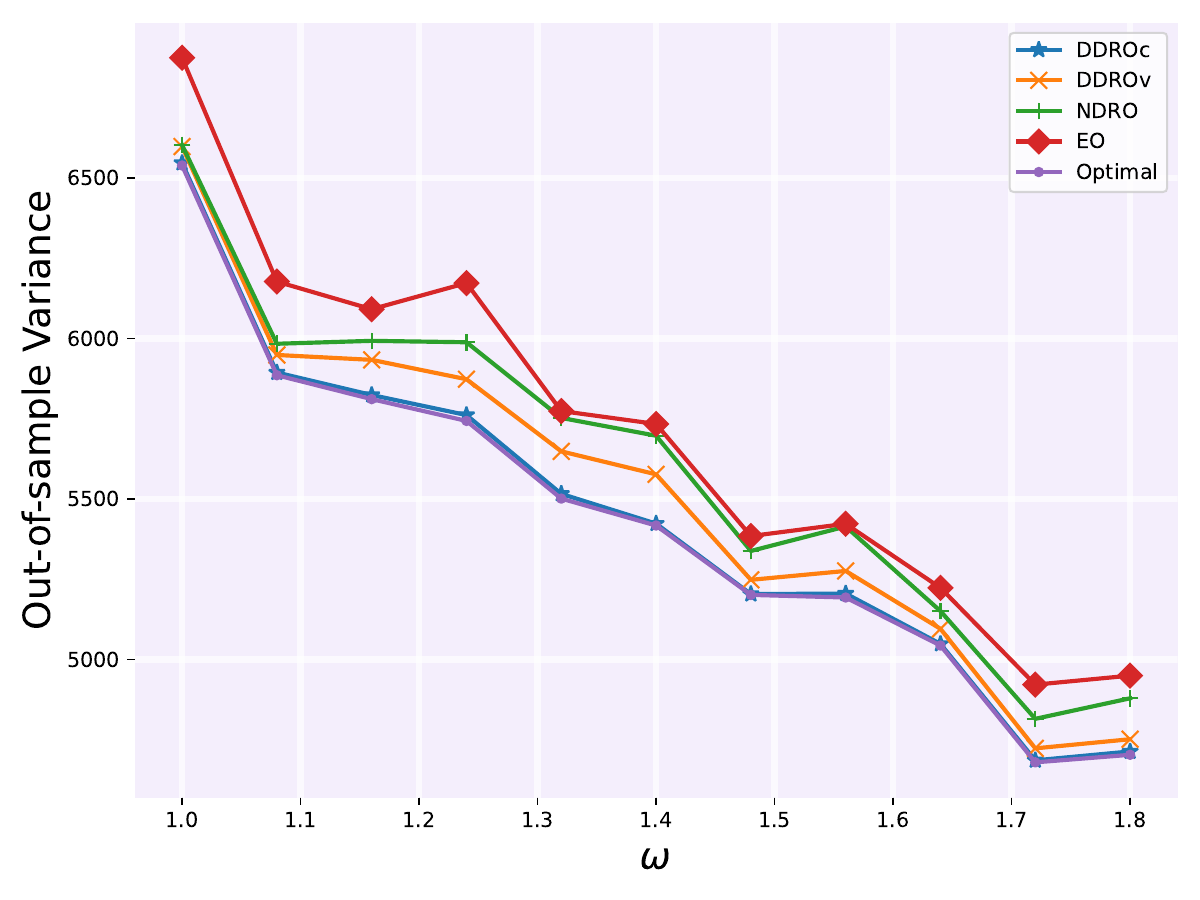}}
    }
    {% Main caption
    Out-of-sample Smoothing Performance with Different Variance}
    {}
\end{figure}

\begin{figure}[H]
    \FIGURE
    {% Subcaptions and Graphics
    \subcaptionbox{Sample Correlation Matrix  \label{fig:dla_cov_sample}}
    {\includegraphics[width=0.4\textwidth]{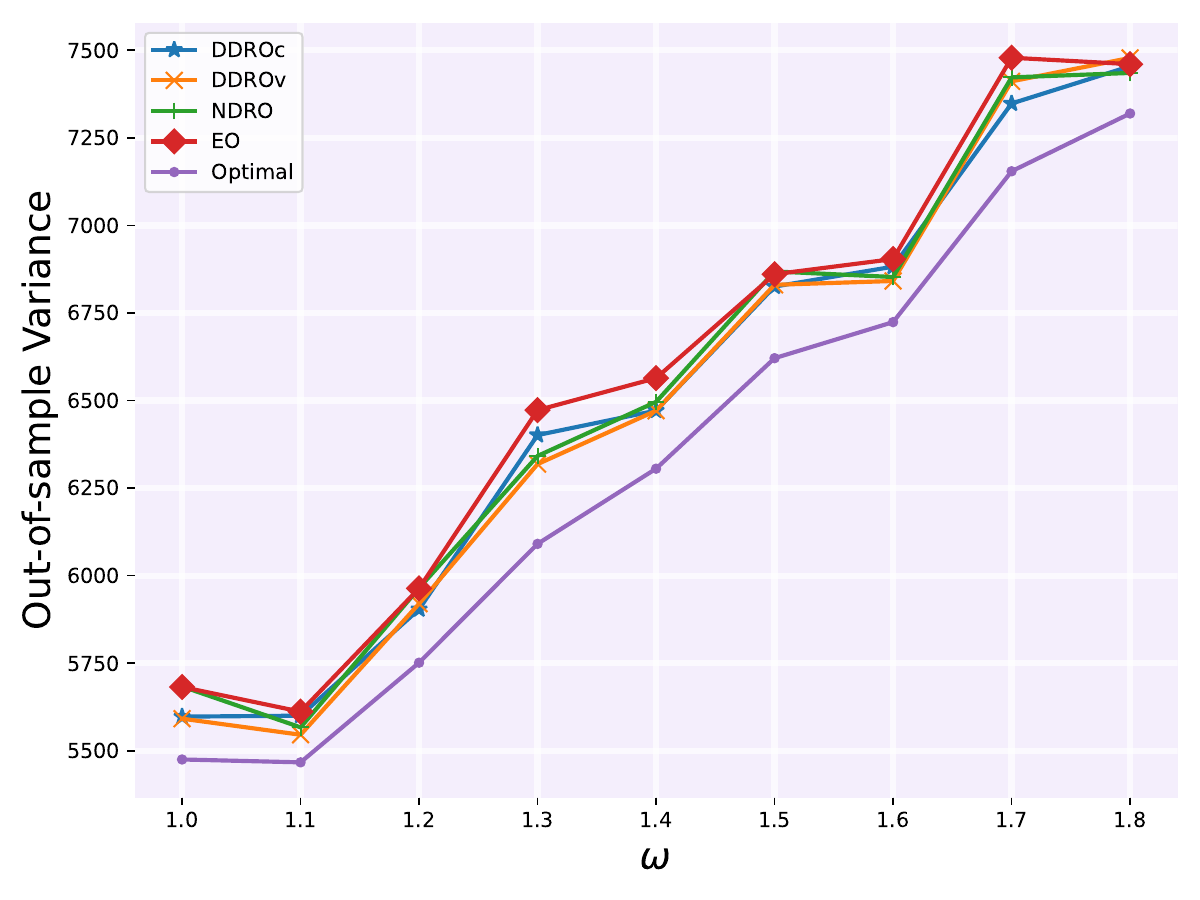}}
    \subcaptionbox{True Correlation Matrix \label{fig:dla_cov_true}}
    {\includegraphics[width=0.4\textwidth]{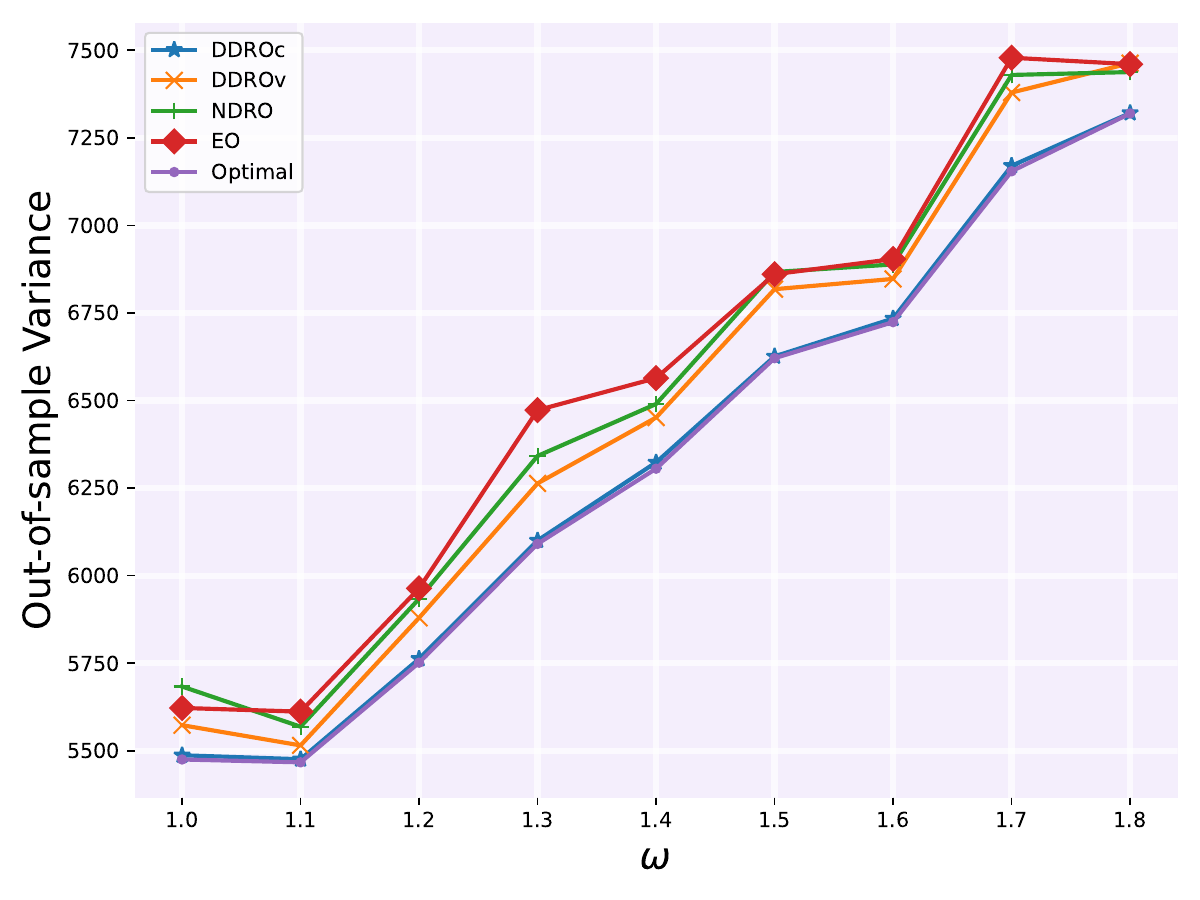}}
    }
    {% Main caption
    Out-of-sample Smoothing Performance with Different Covariance}
    {}
\end{figure}

Our experiments first consider scenarios where decision-makers rely on sample-based estimates of variance and covariance, a common reality in practical planning horizons where perfect foresight is unavailable (Figures \ref{fig:dla_var_sample} and \ref{fig:dla_cov_sample}). We have the following observations:
\begin{enumerate}[label=\Roman*.] 
    \item In general, DDRO achieves superior smoothing effect compared to NDRO and EO. Even without considering interdependencies, acknowledging differential variance across sites leads to more resilient capacity allocation.
    \item DDROc, despite its theoretical sophistication in capturing interdependencies, occasionally performs worse than the simpler DDROv ($\omega=2$ in Figure \ref{fig:dla_cov_sample}). This suggests that inaccurate correlation estimates can mislead the optimization, leading to capacity allocations that chase spurious patterns, ultimately degrading the smoothing effect.
    \item The relatively strong performance of DDROv, even when DDROc falters due to poor correlation estimates, suggests that for practical smoothing purposes, variance information alone carries the major share of the benefit.
\end{enumerate}

We then analyze performance using the true variance and covariance, as shown in Figures~\ref{fig:dla_var_true} and \ref{fig:dla_cov_true}.
The results here paint a clearer picture of theoretical advantages. DDROv continues to demonstrate near-optimal smoothing, reinforcing its efficacy. However, when armed with perfect information about inter-farm correlations, DDROc consistently and significantly outperforms DDROv. This confirms the intrinsic value of understanding and leveraging these interdependencies for maximizing the geographic smoothing effect.

Synthesizing across these scenarios, several key insights for wind farm planning and renewable integration emerge. First, given that DDROv captures a substantial portion of the smoothing effect while remaining computationally efficient (by avoiding the full covariance structure), it serves as a foundational and pragmatic strategy for decision-making, especially in data-limited settings. Second, while true correlations are valuable, estimated correlations from limited or noisy datasets should be treated with caution. Blindly incorporating them into complex models (like DDROc) without assessing data quality can be detrimental. If reliable methods for forecasting or estimating inter-farm correlations can be developed, the incremental benefits in system smoothing and cost reduction can be substantial. Our findings offer a practical illustration of the value of information: begin with DDROv, and only graduate to DDROc as confidence in correlation data improves.

\section{Conclusion}\label{sec:concl}

In this paper, we propose a novel framework for wind farm planning using a DRO approach with decision-dependent Wasserstein ambiguity sets. The problem is formulated as a two-stage DRO model that integrates joint planning and operations under decision-dependent uncertainty. Leveraging the intrinsic characteristics of wind power resources, we define the distribution function and derive the radius function of the ambiguity set. Utilizing duality theory, we reformulate the model into a MISOCP problem and establish an out-of-sample performance guarantee. On the algorithmic side, we develop an efficient solution framework to tackle the computational challenges. Numerical experiments validate the effectiveness of DDRO in managing risk and mitigating uncertainty in wind power generation. Our findings demonstrate the potential of decision-dependent Wasserstein ambiguity sets for practical applications, particularly in scenarios where the distributions exhibit log-concavity. Looking ahead, a promising direction for future research involves extending this framework to accommodate multi-dimensional random variables within Wasserstein ambiguity sets, further enhancing its applicability and robustness in real-world scenarios.

\bibliographystyle{informs2014}
\bibliography{DRO_WFP}

\clearpage

\begin{APPENDICES}

\section{The Result of Tests} \label{sec:test_all}

As is shown in Figure \ref{fig:logconcave_all}, all simulation results (across 24 time periods and 4 seasons) fail to reject the null hypothesis, even when the critical statistic value is set to 0.7, a value well below any practical $\alpha \in (0,1)$. These outcomes strongly validate the hypotheses of log-concavity for the underlying distributions.

\begin{figure}[H]
\FIGURE
{\includegraphics[width=1\textwidth]{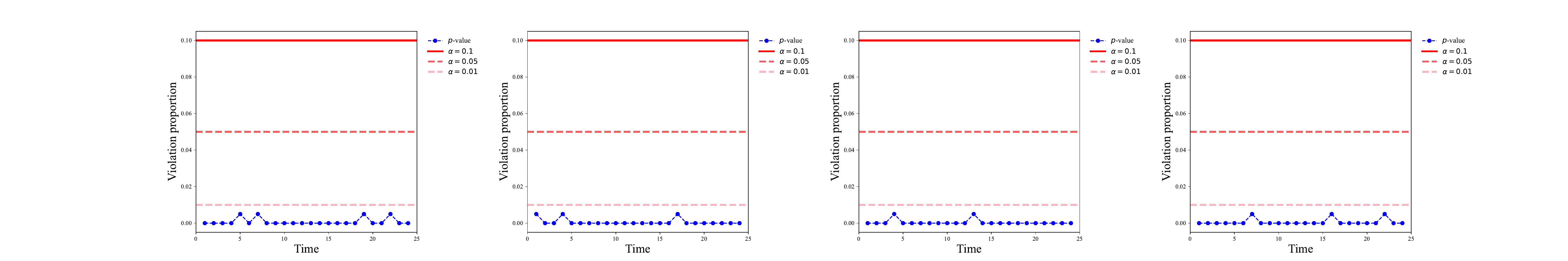}}
{Test Results for Log-Concavity\label{fig:logconcave_all}}
{}
\end{figure}

\section{Extensions of the Model} \label{sec:em}
We provide model extensions to address two important sources of intra-farm output variability: (i) fluctuations due to variable local conditions, and (ii) output heterogeneity arising from the wake effect.

\begin{itemize}
    \item Variable local conditions: To account for variability caused by local conditions, we can decompose the output of each wind turbine into the sum of a common component $\xi_w$ (representing the dominant wind condition within a farm) and a noise term $\epsilon$, i.e., $\xi_w + \epsilon$, where $\epsilon \sim N(0, \sigma^2)$ is an independent zero-mean Gaussian noise capturing local fluctuations. This decomposition preserves the analytical structure of our model, and all theoretical results remain valid under this refined formulation.

    \item Wake effect: When internal variability is driven by the wake effect—i.e., variations in wind speed due to turbine interactions within the same wind farm \citep{niu2022missing}—we extend the formulation by decomposing $\xi_w$ into multiple components, such as $\xi_w^1$ and $\xi_w^2$, corresponding to regions with different wind speeds $V_w^1$ and $V_w^2$. Following wake modeling literature \citet{gonzalez2014review}, these wind speeds are linearly correlated, and by the wind power generation formula, the components $\xi_w^1$ and $\xi_w^2$ are also linearly related. This extension also retains the analytical tractability of our original model.
\end{itemize}

\section{Proof in Section \ref{sec:ddas}}

\begin{lemma}{\citep{bobkov2019one}}\label{lemma2}
    Assume that $\mu$ satisfies $Poincar\acute{e}\text{-}type$ inequality on the real line with constant $\lambda >0$. Then, for any $p \geq 1$ and $\varepsilon > 0$, and any $N \geq 1$, 
     \begin{equation} 
     \mathbb{P}\Big\{ \left|W_p(\xi_N,\xi)-\mathbb{E}\big(W_p(\xi_N,\xi)\big)\big|\geq \varepsilon\right\} \leq C_1\exp\big\{ -2N^{1/\max(p,2)}\sqrt{\lambda} \varepsilon\big\}\text{,} 
     \end{equation}
    where $C_1 > 0$ is an absolute constant.
\end{lemma}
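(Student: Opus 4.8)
The plan is to read the stated fluctuation bound as an instance of exponential concentration of measure for a Lipschitz functional of the i.i.d. sample $\xi_1,\dots,\xi_N\sim\mu$, writing $\xi_N=\frac1N\sum_i\delta_{\xi_i}$ for the empirical measure and $F(\xi_1,\dots,\xi_N):=W_p(\xi_N,\xi)$ for the Wasserstein distance to the true law $\xi=\mu$. The driving principle is the Gromov--Milman phenomenon: a probability measure obeying a Poincar\'e-type inequality with constant $\lambda$ concentrates $1$-Lipschitz functions with \emph{exponential} tails at rate $\exp(-c\sqrt{\lambda}\,t)$, the $\sqrt{\lambda}$ (rather than $\lambda$) being the signature of Poincar\'e as opposed to log-Sobolev. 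Two things must then be supplied: that the joint law of the sample keeps the Poincar\'e constant, and a sharp Euclidean-Lipschitz estimate for $F$.

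First I would tensorize: since $\mu$ satisfies Poincar\'e$(\lambda)$ on $\mathbb{R}$, the product $\mu^{\otimes N}$ satisfies Poincar\'e$(\lambda)$ on $\mathbb{R}^N$ with the \emph{same} constant, because the Poincar\'e constant is preserved under products. Feeding this into the exponential concentration bound for Poincar\'e measures gives, for any $F$ that is $L$-Lipschitz in the Euclidean norm,
\[
\mathbb{P}\{|F-\mathbb{E}F|\ge \varepsilon\}\le C_1\exp\!\big(-c\sqrt{\lambda}\,\varepsilon/L\big).
\]
It then remains to show $F=W_p(\xi_N,\xi)$ is Euclidean-Lipschitz with constant $L$ of order $N^{-1/\max(p,2)}$; substituting this $L$ and absorbing $c$ into the displayed factor $2$ reproduces the claimed rate $\exp(-2N^{1/\max(p,2)}\sqrt{\lambda}\,\varepsilon)$.

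The technical heart, and the main obstacle, is exactly this Lipschitz estimate, where the exponent $\max(p,2)$ is produced. On the line I would exploit the quantile representation $W_p^p(\xi_N,\xi)=\sum_{i=1}^N\int_{(i-1)/N}^{i/N}|\xi_{(i)}-F_\mu^{-1}(u)|^p\,du$, with order statistics $\xi_{(1)}\le\cdots\le\xi_{(N)}$ and quantile function $F_\mu^{-1}$; since sorting is $1$-Lipschitz it suffices to bound the derivatives in the $\xi_{(i)}$. Differentiating and applying H\"older on the $i$-th block yields $|\partial_{\xi_{(i)}}W_p|\le t_i^{(p-1)/p}N^{-1/p}$, where $t_i\ge0$ is the share of the total transport cost carried by block $i$, so $\sum_i t_i=1$. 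Hence $\|\nabla F\|_2^2\le N^{-2/p}\sum_i t_i^{\,q}$ with $q=2(p-1)/p$, and the extremal problem $\max\{\sum_i t_i^{\,q}:\sum_i t_i=1\}$ on the simplex splits: for $p\le2$ (so $q\le1$) it is attained at uniform weights and equals $N^{1-q}=N^{2/p-1}$, giving $\|\nabla F\|_2\le N^{-1/2}$; for $p\ge2$ (so $q\ge1$) one has $t_i^{\,q}\le t_i$ and the sum is at most $1$, giving $\|\nabla F\|_2\le N^{-1/p}$. Combining the two regimes gives $\|\nabla F\|_2\le N^{-1/\max(p,2)}$.

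Finally I would dispose of regularity: $F$ is only piecewise smooth, since the order statistics and $F_\mu^{-1}$ create kinks and possible jumps, so the gradient bound holds almost everywhere, which is enough to certify that $F$ is globally $N^{-1/\max(p,2)}$-Lipschitz. The points I expect to require care are (i) justifying differentiation through the monotone coupling and the order-statistics relabeling, and (ii) tracking the absolute constant so the final exponent is precisely $2N^{1/\max(p,2)}\sqrt{\lambda}\,\varepsilon$ rather than merely proportional to it.
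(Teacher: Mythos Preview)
The paper does not supply its own proof of this lemma: it is stated with the citation \citep{bobkov2019one} and invoked as a black box in the proof of Lemma~\ref{lemma:mc}. So there is no in-paper argument to compare against; the relevant benchmark is the original Bobkov--Ledoux proof.

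Your proposal is correct and is precisely the Bobkov--Ledoux route: tensorize the Poincar\'e inequality to $\mu^{\otimes N}$ (same constant $\lambda$), invoke Gromov--Milman exponential concentration for Lipschitz functionals under Poincar\'e, and supply the Lipschitz bound $\|\nabla W_p(\xi_N,\mu)\|_2\le N^{-1/\max(p,2)}$ via the one-dimensional quantile coupling. Your derivation of the Lipschitz constant is clean and the case split at $p=2$ via the simplex extremal problem for $\sum t_i^{2(p-1)/p}$ is exactly what produces the $\max(p,2)$ exponent. The two caveats you flag---almost-everywhere differentiability through the order statistics, and pinning down the absolute constant so that the exponent reads $2$ rather than a generic $c$---are the only places where care is needed, and both are handled in the cited source; neither is a gap in the strategy.
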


\begin{proof}{Proof of Lemma \ref{lemma:mc}\label{prf:mc}}
    \quad
    
    We first consider the right-hand side of inequality in Lemma \ref{lemma2}. Based on Corollary 4.3 \citep{bobkov1999isoperimetric}, we have
    \begingroup
    \setlength{\abovedisplayskip}{0pt}
    \setlength{\belowdisplayskip}{0pt}
    \begin{align}
        \mathbb{P}\Big\{ \left|W_p({\hat{\mathbb{P}}},\mathbb{P})-\mathbb{E}\big(W_p({\hat{\mathbb{P}}},\mathbb{P})\big)\big|\geq \varepsilon\right\} &\leq C_1\exp\big\{ -2N^{1/2}\sqrt{\lambda} \varepsilon\big\} \\
        &\leq C_1\exp\big\{ -N^{1/2}3^{-1/2}\sigma^{-1}\varepsilon\big\} \text{.}
    \end{align}
    \endgroup
    
    Then, consider the left-hand side of the inequality and we have
    \begin{equation}\label{proof:lh}
         \mathbb{P}\Big\{ \left|W_p({\hat{\mathbb{P}}},\mathbb{P})-\mathbb{E}\big(W_p({\hat{\mathbb{P}}},\mathbb{P})\big)\big|\geq \varepsilon\right\} \geq \mathbb{P}\left\{W_p({\hat{\mathbb{P}}},\mathbb{P})-\mathbb{E}\big(W_p({\hat{\mathbb{P}}},\mathbb{P})\big)\geq \varepsilon\right\} \text{.}
    \end{equation}
    
    Due to the convexity of $f(x)=x^p(x\geq 0, p\geq1)$ and Jensen's inequality, we have
    \begin{equation}
        \mathbb{E}\big(W_p({\hat{\mathbb{P}}},\mathbb{P})\big) \leq\left(\mathbb{E}\big(W_p^p({\hat{\mathbb{P}}},\mathbb{P})\big)\right)^{1/p} \leq \left(\frac {C_2}{2-p}\right)^{1/p} \left(\frac\sigma{\sqrt{N}}\right) \text{,}
    \end{equation}
    where the second inequality follows from Corollary 6.10 \citep{bobkov2019one}. Then, we have: 
    \begin{equation}\label{proof:rh}
        \mathbb{P}\left\{W_p({\hat{\mathbb{P}}},\mathbb{P})-\mathbb{E}\big(W_p({\hat{\mathbb{P}}},\mathbb{P})\big)\geq \varepsilon\right\} \geq \mathbb{P}\left\{W_p({\hat{\mathbb{P}}},\mathbb{P})-\left(\frac {C_2}{2-p}\right)^{1/p} \left(\frac\sigma{\sqrt{N}}\right) \geq \varepsilon\right\} \text{.}
    \end{equation}
    Combining (\ref{proof:lh}) and (\ref{proof:rh}), we have

    \begin{equation}
        \mathbb{P}\left\{W_p({\hat{\mathbb{P}}},\mathbb{P})-\left(\frac {C_2}{2-p} \right)^{1/p}\left(\frac\sigma{\sqrt{N}}\right) \geq \varepsilon\right\} \leq C_1\exp\big\{ -N^{1/2}3^{-1/2}\sigma^{-1}\varepsilon\big\}  \text{.}
    \end{equation}
    
    By setting $\varepsilon \rightarrow \varepsilon-(\frac {C_2}{2-p})^{1/p} (\frac\sigma{\sqrt{N}})$, we have
    \begin{equation}\label{ci}
       \mathbb{P}\{W_p({\hat{\mathbb{P}}},\mathbb{P})\geq \varepsilon\}\leq C_1\mathrm{exp}\left\{-\sqrt\frac{N}{3} \sigma^{-1}\left[\varepsilon-\left(\frac{C_2}{2-p}\right)^{1/p}\left(\frac\sigma{\sqrt{N}}\right)\right]\right\} 
    \end{equation}
    This concludes the proof. 
    \hfill \Halmos  
\end{proof}

\begin{proof}{Proof of Theorem \ref{thm:radius}}
    By setting the right-hand side of \eqref{mc} equal to $\beta$ and solving for $ \varepsilon$, we have
    \begin{equation}\label{radius1}
    \varepsilon = \sqrt{3} \frac{\sigma}{\sqrt{N}} \log \left( \frac{C_1}{\beta} \right) + \left(\frac{C_2}{2-p}\right)^{1/p}\left(\frac\sigma{\sqrt{N}}\right)\text{.}
    \end{equation}
    Considering the random variable $L(\bm{\zeta}(\bm{x}))=\sum_wa_wx_w\xi_{stw}$ in the inner problem $SP_{st}$, we substitute the $\sigma$ in (\ref{radius1}) with $\sqrt{(\bm{a}\cdot\bm{x})^\top{{\bm{\Sigma}}}_{st}(\bm{a}\cdot\bm{x})}$, which concludes the proof.
    \hfill \Halmos
\end{proof}

\begin{lemma}{(Tail bound)}\label{lemma:Hoeffding bound}

    Centered random variable $X$ is sub-exponential with parameters $(\nu^2, \alpha),\nu, \alpha > 0$ if:
    \begin{equation}
        \mathbb{E}e^{\lambda X} \leq e^{\frac{\lambda^2 \nu^2}{2}}, \quad \forall \lambda: |\lambda| < \frac{1}{\alpha}
    \end{equation}
    Then,
    \begin{equation}
        \mathbb{P}(|X - \mu| \geq t) \leq e^{-\frac{1}{2} \min\left\{\frac{t^2}{\nu^2}, \frac{t}{\alpha}\right\}}
    \end{equation}
\end{lemma}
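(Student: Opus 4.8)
The plan is to prove this classical sub-exponential (Bernstein-type) tail bound by a Chernoff argument applied to each tail, with the real work being a constrained optimization of the exponential-tilting parameter $\lambda$ over the admissible interval $(0,1/\alpha)$. Since $X$ is centered we take $\mu=0$ (equivalently, the argument is applied to the centered variable $X-\mu$), so the goal reduces to bounding $\mathbb{P}(|X|\geq t)$, which I would handle by treating the two tails separately and combining them at the end.

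First I would treat the upper tail. For any $\lambda\in(0,1/\alpha)$, Markov's inequality applied to $e^{\lambda X}$ together with the sub-exponential hypothesis gives
\begin{equation*}
\mathbb{P}(X\geq t)\;\leq\;e^{-\lambda t}\,\mathbb{E}\!\left[e^{\lambda X}\right]\;\leq\;\exp\!\Big(-\lambda t+\tfrac{\lambda^2\nu^2}{2}\Big).
\end{equation*}
The next step is to minimize the exponent $g(\lambda)=-\lambda t+\lambda^2\nu^2/2$ over $[0,1/\alpha)$. The unconstrained minimizer $\lambda^\star=t/\nu^2$ is feasible exactly when $t\leq\nu^2/\alpha$, in which case substitution yields the exponent $-t^2/(2\nu^2)$. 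When $t>\nu^2/\alpha$ the constraint binds; since $g$ is decreasing on $[0,1/\alpha)$ in this regime, the infimum is approached at the boundary $\lambda=1/\alpha$, producing $-t/\alpha+\nu^2/(2\alpha^2)$, which I would then bound above using $\nu^2/\alpha<t$ to get $-t/(2\alpha)$. Merging the two regimes gives $\mathbb{P}(X\geq t)\leq\exp\big(-\tfrac12\min\{t^2/\nu^2,\,t/\alpha\}\big)$.

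For the lower tail I would run the identical computation on $-X$: because the hypothesis is stated for all $|\lambda|<1/\alpha$, we have $\mathbb{E}[e^{-\lambda X}]\leq e^{\lambda^2\nu^2/2}$, so the same exponent controls $\mathbb{P}(X\leq-t)$. A union bound over the two tails then yields $\mathbb{P}(|X|\geq t)\leq 2\exp\big(-\tfrac12\min\{t^2/\nu^2,\,t/\alpha\}\big)$, and it is precisely this factor-of-two two-sided form that is invoked downstream in \eqref{beta:sample variance}.

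The main obstacle here is less a matter of difficulty than of care: it is the constrained optimization over $\lambda$. The restriction $\lambda<1/\alpha$ is exactly what forces the two-regime behavior and converts a single quadratic exponent into the minimum of $t^2/\nu^2$ and $t/\alpha$. The delicate point is the large-$t$ regime, where one must evaluate $g$ at the binding boundary and correctly discard the $\nu^2/(2\alpha^2)$ term via $\nu^2/\alpha<t$; skipping this case analysis would either leave an intractable exponent or produce the wrong constant in front of the linear term $t/\alpha$.
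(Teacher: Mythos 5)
Your argument is correct, and it is the canonical Chernoff--Bernstein derivation: bound the upper tail by $\exp(-\lambda t + \lambda^2\nu^2/2)$, optimize over $\lambda \in (0,1/\alpha)$ with the two-regime case split at $t = \nu^2/\alpha$, evaluate at the binding boundary $\lambda = 1/\alpha$ in the large-$t$ regime and discard $\nu^2/(2\alpha^2)$ via $\nu^2/\alpha < t$, then repeat for $-X$ and take a union bound. Note, however, that the paper offers no proof of this lemma at all; it is quoted as a known tail bound (essentially the standard sub-exponential concentration result in the same circle as the Lemma 1.12 of Rigollet--H\"utter cited immediately after it), so there is no in-paper argument to compare against, and yours is the standard one.

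One substantive discrepancy deserves attention, and it cuts against the paper's statement rather than against your proof. Your derivation yields
\[
\mathbb{P}\big(|X| \geq t\big) \leq 2\exp\Big(-\tfrac12\min\big\{t^2/\nu^2,\; t/\alpha\big\}\Big),
\]
with a factor of $2$ from the two tails, whereas the lemma as printed asserts the two-sided bound without that factor. The factor-free two-sided statement is false in general: take $X = \pm\nu$ with probability $1/2$ each, which satisfies $\mathbb{E}e^{\lambda X} = \cosh(\lambda\nu) \leq e^{\lambda^2\nu^2/2}$ for every $\lambda$ (so $\alpha$ may be taken arbitrarily small), yet $\mathbb{P}(|X| \geq \nu) = 1 > e^{-1/2}$. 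So the literal statement cannot be proved; what you proved is the correct version. One small correction to your closing remark: in the proof of Theorem \ref{thm:sample variance} the lemma is applied only to one-sided events (a lower-tail event for $\frac1N\sum_i (Y_i-\mu_v)^2$ and an upper-tail event for $(\bar Y - \mu_v)^2$), for which the one-sided, factor-free bound from your first computation suffices; the factor $2$ appearing in $\beta'$ in \eqref{beta:sample variance} arises from summing those two separate probability bounds, not from a two-sided tail inequality. Either way, your version fully supports the paper's subsequent use, and the paper's statement of the lemma should be amended to either the one-sided form or the two-sided form with the factor of $2$.
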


\begin{proof}{Proof of Theorem \ref{thm:sample variance}}
    \quad  

    Let us treat the concentration inequality. Setting $k \in (0,1)$, we have 
    \begingroup
    \setlength{\abovedisplayskip}{5pt}
    \setlength{\belowdisplayskip}{5pt}
    \begin{align}
        &\mathbb{P}\{W_p(\hat {\mathbb P}_{st}(\bm{x}),{\mathbb P}_{st}(\bm{x}))\geq \varepsilon_{st}'(\bm{x})\}\\[2mm]
        =&\mathbb{P}\{W_p(\hat {\mathbb P}_{st}(\bm{x}),{\mathbb P}_{st}(\bm{x}))\geq \varepsilon_{st}'(\bm{x}),\sqrt{(\bm{a}\cdot\bm{x})^\top\widehat{{\bm{\Sigma}}}_{st}(\bm{a}\cdot\bm{x})}\geq k \sqrt{(\bm{a}\cdot\bm{x})^\top{{\bm{\Sigma}}}_{st}(\bm{a}\cdot\bm{x})}\} \notag \\[2mm]
        &+ \mathbb{P}\{W_p(\hat {\mathbb P}_{st}(\bm{x}),{\mathbb P}_{st}(\bm{x}))\geq \varepsilon_{st}'(\bm{x}),\sqrt{(\bm{a}\cdot\bm{x})^\top\widehat{{\bm{\Sigma}}}_{st}(\bm{a}\cdot\bm{x})}\leq k \sqrt{(\bm{a}\cdot\bm{x})^\top{{\bm{\Sigma}}}_{st}(\bm{a}\cdot\bm{x})}\} \\[2mm]
        \leq&\mathbb{P}\{W_p(\hat {\mathbb P}_{st}(\bm{x}),{\mathbb P}_{st}(\bm{x}))\geq \varepsilon_{st}'(\bm{x})\Big|\sqrt{(\bm{a}\cdot\bm{x})^\top\widehat{{\bm{\Sigma}}}_{st}(\bm{a}\cdot\bm{x})}\geq k \sqrt{(\bm{a}\cdot\bm{x})^\top{{\bm{\Sigma}}}_{st}(\bm{a}\cdot\bm{x})}\} \notag \\[2mm]
        &+ \mathbb{P}\{\sqrt{(\bm{a}\cdot\bm{x})^\top\widehat{{\bm{\Sigma}}}_{st}(\bm{a}\cdot\bm{x})}\leq k \sqrt{(\bm{a}\cdot\bm{x})^\top{{\bm{\Sigma}}}_{st}(\bm{a}\cdot\bm{x})}\} \label{proof:inequality}
    \end{align}

    The inequality follows from the conditional probability. Focusing on the first term in (\ref{proof:inequality}), we have 
    \begin{align}
        &\mathbb{P}\{W_p(\hat {\mathbb P}_{st}(\bm{x}),{\mathbb P}_{st}(\bm{x}))\}\geq \varepsilon_{st}'(\bm{x})\Big|\sqrt{(\bm{a}\cdot\bm{x})^\top\widehat{{\bm{\Sigma}}}_{st}(\bm{a}\cdot\bm{x})}\geq k \sqrt{(\bm{a}\cdot\bm{x})^\top{{\bm{\Sigma}}}_{st}(\bm{a}\cdot\bm{x})}\} \\[2mm]
        =&C_1\exp\left\{ \begin{array}{l}
        \begin{aligned}
        -k\sqrt{\frac{N}{3(\bm{a}\cdot\bm{x})^\top\widehat{{\bm{\Sigma}}}_{st}(\bm{a}\cdot\bm{x})}}
                &\Bigg[\sqrt{\frac{3(\bm{a}\cdot\bm{x})^\top\widehat{{\bm{\Sigma}}}_{st}(\bm{a}\cdot\bm{x})}{{N}}}  \log \left( \frac{C_1}{\beta} \right)    \\[2mm]
        + \left(\frac{C_2}{2-p}\right)^{1/p}&\sqrt{\frac{(\bm{a}\cdot\bm{x})^\top\widehat{{\bm{\Sigma}}}_{st}(\bm{a}\cdot\bm{x})}{N}}-\left(\frac{C_2}{2-p}\right)^{1/p}\sqrt{\frac{(\bm{a}\cdot\bm{x})^\top\widehat{{\bm{\Sigma}}}_{st}(\bm{a}\cdot\bm{x})}{k^2N}}\Bigg]
        \end{aligned}
        \end{array} \right\} \\[2mm]
        =&C_1\mathrm{exp}\left\{- \left[ k \log \left( \frac{C_1}{\beta} \right)  +3^{-1/2}(k-1)\left(\frac{C_2}{2-p}\right)^{1/p}\right]\right\} \label{proof:1}
    \end{align}
    \endgroup
    The first equality follows from the monotonicity of the right-hand side of (\ref{ci}) with respect to the variance, and we can relax it by substituting $\sqrt{(\bm{a}\cdot\bm{x})^\top{{\bm{\Sigma}}}_{st}(\bm{a}\cdot\bm{x})}$ with $\frac1k\sqrt{(\bm{a}\cdot\bm{x})^\top\widehat{{\bm{\Sigma}}}_{st}(\bm{a}\cdot\bm{x})}$.

    Given that $\bm{\xi}=(\xi_{w})_{w \in [W]}$ is bounded and sub-Gaussian, any projection of $\bm{\xi}$, namely $\bm{v}^\top\bm{\xi},\|\bm{v}\|=1$, is sub-Gaussian.
    Let $\bm{\xi}_1, \ldots, \bm{\xi}_N \in \mathbb{R}^{W}$ be i.i.d sub-Gaussian random vectors with mean $\bm{\mu} \in \mathbb{R}^{W}$ and covariance matrix ${\bm{\Sigma}}$. The sample covariance matrix is defined as
    \[
    \hat{{\bm{\Sigma}}} = \frac{1}{N-1} \sum_{i=1}^N (\bm{\xi}_i - \bar{\bm{\xi}})(\bm{\xi}_i - \bar{\bm{\xi}})^\top, \quad \text{where } \bar{\bm{\xi}} = \frac{1}{N} \sum_{i=1}^N \bm{\xi}_i.
    \]
    For any fixed vector $\bm{v} \in \mathbb{R}^{W}$, define the scalar-valued random variable $Y_i = \bm{v}^\top \bm{\xi}_i$, which has mean $\mu_v = \bm{v}^\top \bm{\mu}$ and variance $\sigma_v^2 = \bm{v}^\top {\bm{\Sigma}} \bm{v}$. The corresponding sample mean is denoted by $\bar{Y} = \frac{1}{N} \sum_{i=1}^N Y_i$.
    
    We define $\bm{v}=\frac{\bm{a}\cdot\bm{x}}{\|\bm{a}\cdot\bm{x}\|}$ and omit the index $s,t$ for simplicity, then the second term in (\ref{proof:inequality}) can be reformulated as, 
    \begingroup
    \setlength{\abovedisplayskip}{2pt}
    \setlength{\belowdisplayskip}{1pt}
    \begin{align*}
        &\mathbb P\{ \sqrt{(\bm{a}\cdot\bm{x})^\top\widehat{{\bm{\Sigma}}}_{st}(\bm{a}\cdot\bm{x})}\leq k \sqrt{(\bm{a}\cdot\bm{x})^\top{{\bm{\Sigma}}}_{st}(\bm{a}\cdot\bm{x})} \} \\
        =&\mathbb P\{ (\bm{a}\cdot\bm{x})^\top\widehat{{\bm{\Sigma}}}_{st}(\bm{a}\cdot\bm{x})\leq k^2 (\bm{a}\cdot\bm{x})^\top{{\bm{\Sigma}}}_{st}(\bm{a}\cdot\bm{x}) \} \\[2mm]
        =&\mathbb P\{ \hat{\sigma}^2_v \leq k^2\sigma^2_v \}
    \end{align*}
    \endgroup

    Due to $\frac{N-1}{N}\hat \sigma_v^2=\frac{1}{N}\sum_{i=1}^{N}(Y_i-\mu_v)^2-(\mu_v-\bar Y)^2$, we have 
    \begingroup
    \setlength{\abovedisplayskip}{2pt}
    \setlength{\belowdisplayskip}{1pt}
    \begin{align}
        &\mathbb P\{ \hat{\sigma}^2_v \leq k^2\sigma^2_v \} \\
        =&\mathbb{P}\{\frac{1}{N}\sum_{i=1}^{N}(Y_i-\mu_v)^2-(\mu_v-\bar Y)^2\leq \frac{N-1}{N}k^2\sigma_v^2 \} \\
        \leq&\mathbb{P}\{\frac{1}{N}\sum_{i=1}^{N}(Y_i-\mu_v)^2-\sigma_v^2-(\mu_v-\bar Y)^2\leq (k^2-1)\sigma_v^2\} \\
        \leq&\mathbb{P}\{\frac{1}{N}\sum_{i=1}^{N}(Y_i-\mu_v)^2-\sigma_v^2\leq\frac{k^2-1}{2}\sigma_v^2\}+\mathbb{P}\{(\mu_v-\bar Y)^2\geq\frac{1-k^2}{2}\sigma_v^2\} \label{proof:sub}
    \end{align}
    \endgroup

    Given that $Y_i$ is sub-Gaussian, we denote its sub-Gaussian parameter by $\zeta$, which is bounded due to the boundness of $\bm{\xi}$. Then $\frac{1}{N}\sum_{i=1}^{N}(Y_i-\mu_v)^2$ and $(\bar Y-\mu_v)^2$ are sub-exponential with parameters $(\frac{256\zeta^4}{N}, \frac{16\zeta^2}{N})$ and $(\frac{256\zeta^4}{N^2}, \frac{16\zeta^2}{N})$, respectively (Lemma 1.12, \citealp{rigollet2023high}). Based on Lemma \ref{lemma:Hoeffding bound}, we have
    \begingroup
    \setlength{\abovedisplayskip}{1pt}
    \setlength{\belowdisplayskip}{1pt}
    \begin{align*}
        \mathbb{P}\{\frac{1}{N}\sum_{i=1}^{N}(Y_i-\mu_v)^2-\sigma_v^2\leq\frac{k^2-1}{2}\sigma_v^2\} \leq \exp\Big\{-N\min\{c_1(1-k^2)^2\sigma_v^4, c_2(1-k^2)\sigma_v^2\}\Big\} \\
        \mathbb{P}\{(\mu_v-\bar Y)^2\geq\frac{1-k^2}{2}\sigma_v^2\} \leq \exp\Big\{-N\min\{c_3N(1-k^2)^2\sigma_v^4, c_4(1-k^2)\sigma_v^2\}\Big\}
    \end{align*}
    \endgroup
    where $c_1, c_2, c_3, c_4$ are positive constants.
    
    For the second term in (\ref{proof:inequality}), we have
    \begingroup
    \setlength{\abovedisplayskip}{1pt}
    \setlength{\belowdisplayskip}{1pt} 
    \begin{align}
        &\mathbb{P}\{\sqrt{(\bm{a}\cdot\bm{x})^\top\widehat{{\bm{\Sigma}}}_{st}(\bm{a}\cdot\bm{x})}\leq k \sqrt{(\bm{a}\cdot\bm{x})^\top{{\bm{\Sigma}}}_{st}(\bm{a}\cdot\bm{x})}\} \\
        \leq &\exp\Big\{-N\min\{c_1(1-k^2)^2((\bm{a}\cdot\bm{x})^\top{{\bm{\Sigma}}}_{st}(\bm{a}\cdot\bm{x}))^2, c_2(1-k^2)(\bm{a}\cdot\bm{x})^\top{{\bm{\Sigma}}}_{st}(\bm{a}\cdot\bm{x})\}\Big\} \\
        &+ \exp\Big\{-N\min\{c_3N(1-k^2)^2((\bm{a}\cdot\bm{x})^\top{{\bm{\Sigma}}}_{st}(\bm{a}\cdot\bm{x}))^2, c_4(1-k^2)(\bm{a}\cdot\bm{x})^\top{{\bm{\Sigma}}}_{st}(\bm{a}\cdot\bm{x})\}\Big\} \notag\\
        \leq &2\exp\Big\{-N\min\{C_3(1-k^2)^2((\bm{a}\cdot\bm{x})^\top{{\bm{\Sigma}}}_{st}(\bm{a}\cdot\bm{x}))^2, C_4(1-k^2)(\bm{a}\cdot\bm{x})^\top{{\bm{\Sigma}}}_{st}(\bm{a}\cdot\bm{x})\}\Big\} \label{proof:2}
    \end{align}
    \endgroup
    Where $C_3, C_4$ are positive constants.

    Combining (\ref{proof:1}) and (\ref{proof:2}), we have
    \begingroup
    \setlength{\abovedisplayskip}{1pt}
    \setlength{\belowdisplayskip}{1pt}
    \begin{align*}
        &\mathbb{P}\{W_p(\hat {\mathbb P}_{st}(\bm{x}),{\mathbb P}_{st}(\bm{x}))\geq \varepsilon_{st}'(\bm{x})\} \\[2mm]
        \leq &C_1\mathrm{exp}\left\{- \left[ k \log \left( \frac{C_1}{\beta} \right)  +3^{-1/2}(k-1)\left(\frac{C_2}{2-p}\right)^{1/p}\right]\right\} \\[2mm]
        & + 2\exp\Big\{-N\min\{C_3(1-k^2)^2((\bm{a}\cdot\bm{x})^\top{{\bm{\Sigma}}}_{st}(\bm{a}\cdot\bm{x}))^2, C_4(1-k^2)(\bm{a}\cdot\bm{x})^\top{{\bm{\Sigma}}}_{st}(\bm{a}\cdot\bm{x})\}\Big\}
    \end{align*}
    \endgroup

    Finally, we choose a $k$ that minimizes the violation probability and define 
    \begingroup
    \setlength{\abovedisplayskip}{1pt}
    \setlength{\belowdisplayskip}{1pt}
    \begin{align}
        \beta'=&\inf_{k\in(0,1)}\left\{C_1\mathrm{exp}\Big[- k \log \Big( \frac{C_1}{\beta} \Big) +\frac{1-k}{\sqrt3}\Big(\frac{C_2}{2-p}\Big)^{1/p}\Big] \right.\\
        &+2\exp\big[-N\min\{C_3(1-k^2)^2((\bm{a}\cdot\bm{x})^\top{{\bm{\Sigma}}}_{st}(\bm{a}\cdot\bm{x}))^2, C_4(1-k^2)(\bm{a}\cdot\bm{x})^\top{{\bm{\Sigma}}}_{st}(\bm{a}\cdot\bm{x})\}\big]\Big\}\text{.}
    \end{align}
    \endgroup
    This concludes the proof.

    \hfill \Halmos
\end{proof}

\section{Proof in Section \ref{sec:mr}}
\begin{proof}{Proof of Theorem \ref{thm:mr}}

    Let $\tau^1_{stg}$, $\tau^2_{stg}$ and $\gamma_{st}$ be the dual variables associated with constraints (\ref{subcos:obj non-negative1}), (\ref{subcos:obj non-negative2}) and (\ref{subcos:error balance}); $\mu^1_{stg}$ and $\mu^2_{stg}$ the dual variables associated with constraint (\ref{subcos:adjustment limit}). Then we formulate the dual of problem $h_{st}(\bm{\zeta}_{st}(\bm{x}))$ as

    \begingroup
    \setlength{\abovedisplayskip}{0pt}
    \setlength{\belowdisplayskip}{0pt}
    \begin{align}
        \max \quad -&\left[\sum_w\zeta_{stw}(x_w)-\bar \zeta_{stw}(x_w)\right]\gamma_{st}  - \sum_g{\left({\bar r}_{stg}\mu^1_{stg}+{\underline r}_{stg}\mu^2_{stg}\right)}  \\
        \text{s.t.}\quad & 1-\tau^1_{stg}-\tau^2_{stg}=0  & \forall g \label{cos:power balance0}\\
        & \gamma_{st}+DA_g\tau^1_{stg}-UA_g\tau^2_{stg}-\mu^1_{stg}+\mu^2_{stg}=0 &\forall g \\
        & WC +\gamma_{st} \geq 0 &\quad\\
        & LS - \gamma_{st} \geq 0  &\quad\\
        & \tau^1_{stg}, \tau^2_{stg} \geq 0 &\forall g \\
        & \mu^1_{stg}, \mu^2_{stg} \geq 0  &\forall g \label{cos:power balance1}
    \end{align}
    \endgroup
    We focus on the $\sum_w\zeta_{stw}(x_w)$ and $\gamma_{st}$, and denote the above reformulation as $\max_{\gamma_{st} \in \mathcal{D}_{st}}-\gamma_{st}\sum_w\zeta_{stw}(x_w)+\varrho$, where $\mathcal{D}_{st}$ represents the dual feasbile region defined by constraints (\ref{cos:power balance0})-(\ref{cos:power balance1}).

    By duality theory, the worst-case expectation problem admits the strong dual robust optimization problem
    \begin{align}
        \inf_{\phi_{st}>0}\  &\phi_{st}\varepsilon'_{st}(\bm{x})+\frac{1}{N}\sum_{i=1}^{N}s_i \\
        \text{s.t.} \sup_{\sum_w\zeta_{stw}(x_w) \in \mathbb{R}} h_{st}(\bm{\zeta}_{st}(\bm{x}))-&\phi_{st}|\sum_w\zeta_{stw}(x_w) - \sum_w\hat\zeta_{stwi}(x_w)| \leq s_i \quad \forall i
    \end{align}

    Substituting the $h_{st}(\bm{\zeta}_{st}(\bm{x}))$ with its dual reformulation, we obtain
    \begin{equation*}
        \sup_{\sum_w\zeta_{stw}(x_w) \in \mathbb{R}} \max_{\gamma_{st} \in \mathcal{D}_{st}}-\gamma_{st}\sum_w\zeta_{stw}(x_w)+\varrho-\phi_{st}|\sum_w\zeta_{stw}(x_w) - \sum_w\hat\zeta_{stwi}(x_w)| \leq s_i \quad \forall i
    \end{equation*}
    Invoking the definition of the dual norm and interchanging the order of the supremum and the maximum, we have
    \begingroup
    \setlength{\abovedisplayskip}{1pt}
    \setlength{\belowdisplayskip}{1pt}
    \begin{align*}
        &\max_{\gamma_{st} \in \mathcal{D}_{st}} \sup_{\sum_w\zeta_{stw}(x_w) \in \mathbb{R}} -\gamma_{st}\sum_w\zeta_{stw}(x_w)+\varrho-\phi_{st}|\sum_w\zeta_{stw}(x_w) - \sum_w\hat\zeta_{stwi}(x_w)| \leq s_i \quad \forall i \\
        =&\max_{\gamma_{st} \in \mathcal{D}_{st}} \sup_{\sum_w\zeta_{stw}(x_w) \in \mathbb{R}}\inf_{|\iota_i| \leq \phi_{st}} -\gamma_{st}\sum_w\zeta_{stw}(x_w)+\varrho-\iota_i(\sum_w\zeta_{stw}(x_w) - \sum_w\hat\zeta_{stwi}(x_w)) \leq s_i \quad \forall i \\
    \end{align*}
    \endgroup

    Next, we apply the classical minimax theorem (Proposition 5.5.4, \citealp{bertsekas2009convex}) to interchange the order of the innermost supremum and the infimum, which is valid due to the compactness of the domain of $\iota_i$. We then proceed to interchange the supremum and the outer maximization. This yields
    \begingroup
    \setlength{\abovedisplayskip}{1pt}
    \setlength{\belowdisplayskip}{1pt}
    \begin{align*}
        &\left\{\begin{aligned}
            &\inf_{\phi_{st}>0}\ \phi_{st}\varepsilon'_{st}(\bm{x})+\frac{1}{N}\sum_{i=1}^{N}s_i \\
            \text{s.t.}\quad &\inf_{|\iota_i| \leq \phi_{st}}\max_{\gamma_{st} \in \mathcal{D}_{st}} \sup_{\sum_w\zeta_{stw}(x_w) \in \mathbb{R}} -\gamma_{st}\sum_w\zeta_{stw}(x_w)+\varrho-\iota_i(\sum_w\zeta_{stw}(x_w) - \sum_w\hat\zeta_{stwi}(x_w)) \leq s_i \quad \forall i
        \end{aligned} \right. \\
        =&\left\{\begin{aligned}
            &\inf_{\phi_{st}>0}\ \phi_{st}\varepsilon'_{st}(\bm{x})+\frac{1}{N}\sum_{i=1}^{N}s_i \\
            \text{s.t.}\quad &\max_{\gamma_{st} \in \mathcal{D}_{st}}\sup_{\sum_w\zeta_{stw}(x_w) \in \mathbb{R}} -\gamma_{st}\sum_w\zeta_{stw}(x_w)+\varrho-\iota_i(\sum_w\zeta_{stw}(x_w) - \sum_w\hat\zeta_{stwi}(x_w)) \leq s_i \quad \forall i\\
            &|\iota_i| \leq \phi_{st} \quad \forall i
        \end{aligned} \right. \\
        =&\inf_{\phi_{st}>0}\ \phi_{st}\varepsilon'_{st}(\bm{x})+\frac{1}{N}\sum_{i=1}^{N}\max_{\gamma_{st} \in \mathcal{D}_{st}}-\gamma_{st}\sum_w\hat\zeta_{stwi}(x_w)+\varrho  + \chi_{|\gamma_{st}| \leq \phi_{st}}(\gamma_{st})
    \end{align*}
    \endgroup
    where $\chi_{|\gamma_{st}| \leq \phi_{st}}(\gamma_{st})$ is defined as $\chi_{|\gamma_{st}| \leq \phi_{st}}(\gamma_{st})=0$ if $|\gamma_{st}| \leq \phi_{st}$ and as $=\infty$ otherwise.

    Defining $\phi_{st}^\star = \sup\{\gamma_{st}: \gamma_{st} \in \mathcal{D}_{st}\}$, we study two cases with respect to the value of $\phi_{st}$: 
    \begin{enumerate}
        \item $\phi_{st} < \phi_{st}^\star$: In this case, the supremum over $\gamma_{st}$ would be unbounded.
        \item $\phi_{st} > \phi_{st}^\star$: In this case, any value of $\phi_{st}$ exceeding the optimal threshold would lead to an unnecessarily high objective value, due to the penalization by $\varepsilon’_{st}(\bm{x})$.
    \end{enumerate}
    Combined with the above two cases, the unique optimal solution is $\phi_{st}^\star$. We thus obtain
    \begingroup
    \setlength{\abovedisplayskip}{1pt}
    \setlength{\belowdisplayskip}{1pt}
    \begin{align*}
        &\left\{\begin{aligned}
            &\inf_{\phi_{st}>0}\ \phi_{st}\varepsilon'_{st}(\bm{x})+\frac{1}{N}\sum_{i=1}^{N}\max_{\gamma_{st} \in \mathcal{D}_{st}}-\gamma_{st}(\sum_w\hat\zeta_{stwi}(x_w))+\varrho \\
            \text{s.t.}\quad &|\gamma_{st}| \leq \phi_{st} \quad \forall \gamma_{st} \in \mathcal{D}_{st}
        \end{aligned} \right.
    \end{align*}
    \endgroup

    For intuitive representation, we transform the dual problem back into the primal problem. This yields
    \begin{equation*}
        \max_{{\mathbb P_{st}(\bm{x})} \in \mathcal{P}_{st}(\bm{x})}\mathbb{E}_{\mathbb P_{st}(\bm{x})}[h_{st}(\bm{\zeta}_{st}(\bm{x}))] = \mathbb{E}_{\hat {\mathbb P} _{st}(\bm{x})}[h_{st}(\bm{\zeta}_{st}(\bm{x}))] + \varepsilon_{st}'(\bm{x})*\phi_{st}
    \end{equation*}
    where $\hat {\mathbb P}_{st}(x)=\frac{1}{N}\sum_{i=1}^N\delta_{\sum_w x_w\hat\xi_{stwi}}$, $\phi_{st}:=\max_{\gamma_{st} \in \mathcal{D}_{st}}|\gamma_{st}|$, and $\mathcal{D}_{st}$ is the dual feasible region.

    Since $\gamma_{st}$ is bounded by the linear constraints of the dual feasible region, the value of $\phi_{st}$ can be determined as follows:
    \begingroup
    \setlength{\abovedisplayskip}{1pt}
    \setlength{\belowdisplayskip}{1pt}
    \begin{align*}
        &\max \quad |\gamma_{st}|  \\
        \text{s.t.} \quad &constraint (\ref{cos:power balance0}) - constraint (\ref{cos:power balance1})\text{,}
    \end{align*}
    \endgroup
    which can be further reformulated as a linear programming using epigraph reformulation. This concludes the proof.
    
    \hfill \Halmos
\end{proof}

\begin{proof}{Proof of Theorem \ref{thm:pg}}
    The claim follows directly from Theorem \ref{thm:sample variance}, which ensures, via the definition of $\varepsilon_{st}'(\bm{x})$, that $\mathbb{P}\left\{\mathbb{P}_{st}(\bm{x}) \in \mathcal P_{st}(\bm{x})  \right\}\geq 1-\beta'$. This implies that 
    $\mathbb{E}_{\mathbb P_{st}(\bm{x})}[h_{st}(\bm{\zeta}_{st}(\bm{x}))]\leq \sup_{\mathbb P_{st}(\bm{x}) \in \mathcal{P}_{st}(\bm{x})}\mathbb{E}_{\mathbb P_{st}(\bm{x})}[h_{st}(\bm{\zeta}_{st}(\bm{x}))]={J}_{st}$ with probability $1-\beta'$.
    
    \hfill \Halmos
\end{proof}

\begin{proof}{Proof of Theorem \ref{thm:tlc}}
    Based on the Proposition 3.4 in \citet{shapiro2001duality}, we can exchange the supremum and minimum operators, that is 
    \begin{equation}
     \min_{\eta_{stl} \in \mathbb{R}}\sup_{{\mathbb P_{stl}(\bm{x})} \in \mathcal{P}_{stl}(\bm{x})}\mathbb{E}_{\mathbb P_{stl}(\bm{x})}\left\{\eta_{stl}+\frac1{\epsilon}[\max_{k\in\{1,2\}}l_{stlk}(\bm{\zeta}_{st}(\bm{x}))+l^0_{stlk}-\eta_{stl}]_+\right\}\leq0 \quad\forall s,t,l\text{,}
    \end{equation}
    Then, (\ref{cvar3}) follows from Corollary 5.1 in \citet{mohajerin2018data} with ambiguity set (\ref{ddas}) and the random variable $L_2(\bm{\zeta}_{st}(\bm{x}))=\sum_{w}\pi_{wl}x_w\xi_{stw}$.
    \hfill \Halmos
\end{proof}

\section{L-shaped Algorithm}\label{sec:la}

This is simply a standard benders decomposition routine.

Problem $RP$ can be decomposed into a master problem, which consists of decisions on capacity planning, generation, reserve and the regularization term, and several subproblems, which represent the empirical expectation. The master problem is formulated as
\begingroup
\setlength{\abovedisplayskip}{2pt}
\setlength{\belowdisplayskip}{0pt}
\begin{align}
\begin{split}
    MRP:\quad \min \quad &\sum_w{c_wx_w} +\sum_s\Big\{\sum_{t}\sum_g\left[F_g(P_{stg}) + UR_{g}\bar r_{stg} + DR_{g}\underline r_{stg}\right] +  \\[-2mm]
    &\hspace{3cm}\sum_{t}\left[\phi_{st}\kappa_{st}\vartheta_{st}\right]\Big\}\Delta_s 
\end{split} \\
\text{s.t.} \quad  &(\ref{cos:power balance})-(\ref{cos:ramp down limit}), (\ref{cos:non-negative})-(\ref{cos:integer}), (\ref{reformulation:radius}), (\ref{cvar3})
\end{align}
\endgroup
Given a feasible solution to the master problem $MRP$, the subproblem is a sample average approximation for the single-period scheduling problem as follows:
\begingroup
\setlength{\abovedisplayskip}{2pt}
\setlength{\belowdisplayskip}{1pt}
\begin{align}
\begin{split}
    (SRP_{st}):\quad \min \quad &\frac1N\sum_i\Big(\sum_gz_{stgi}+WCw_{sti} + LSl_{sti}\Big) 
\end{split} \\
\text{s.t.}\quad
&DA_g\alpha_{stgi} \leq z_{stgi} \quad &\forall g,i \label{SRP:cos:power balance}\\
&-UA_g\alpha_{stgi} \leq z_{stgi} \quad &\forall g,i \label{SRP:cos:generation limit}\\
&\sum_g\alpha_{stgi} + w_{sti}-l_{sti} = \sum_wx_w(\hat\xi_{stwi}-\bar \xi_{stwi}) \quad &\forall i \label{SRP:cos:ramp up limit}\\[-2mm]
&-\bar r_{stg} \leq \alpha_{stgi} \leq \underline r_{stg} \quad & \forall g,i\label{SRP:cos:ramp down limit}
\end{align}
\endgroup

For each $s$ and $t$, let $\bm{\tau}^1_{stg}, \bm{\tau}^2_{stg}, \bm{\gamma}_{st} \in \mathbb{R}^N$ be the dual variables associated with constraints (\ref{SRP:cos:power balance}), (\ref{SRP:cos:generation limit}), (\ref{SRP:cos:ramp up limit}), respectively; $\bm{\mu}^1_{stg}, \bm{\mu}^2_{stg} \in \mathbb{R}^N$ be the dual variables associated with constraints (\ref{SRP:cos:ramp down limit}); $\bm{\Lambda}_{st}$ be the set of extreme points of the dual problem $SRP_{st}$. Given that the subproblem $SRP_{st}$ is manifestly feasible and has optimal solutions, for each scenario $s$ and period $t$, we only need to add optimality cuts into the master problem and derive a relaxation of the problem $MRP$ as
\begingroup
\setlength{\abovedisplayskip}{2pt}
\setlength{\belowdisplayskip}{0pt}
\begin{align}
\begin{split}
    MRP^{\prime}:\quad \min \quad &\sum_w{c_wx_w} +\sum_s\Big\{\sum_{t}\sum_g\left[F_g(P_{stg}) + UR_{g}\bar r_{stg} + DR_{g}\underline r_{stg}\right] +  \\[-1mm]
    &\hspace{3cm}
    \sum_{t}\left[\phi_{st}\kappa_{st}\vartheta_{st} + q_{st}\right]\Big\}\Delta_s 
\end{split} \notag\\[-3mm]
\text{s.t.} \quad
\begin{split}
    &-\frac{1}{N}\sum_i\Big[\sum_wx_w(\hat\xi_{stwi}-\bar \xi_{stwi})\gamma_{sti}+\sum_g\bar r_{stg}\mu^1_{stgi} + \underline r_{stg}\mu^2_{stgi} \Big] \leq q_{st} , \\ 
    &\hspace{5cm}\forall (\cdot,\cdot,\bm{\gamma}_{st}, \bm{\mu}^1_{stg},\bm{\mu}^2_{stg}) \in \bm{\Lambda}_{st}, \forall s,t,
\end{split} \label{optimality cuts}\\
&(\ref{cos:power balance})-(\ref{cos:ramp down limit}), (\ref{cos:non-negative})-(\ref{cos:integer}), (\ref{reformulation:radius}), (\ref{cvar3}) \notag
\end{align}
\endgroup

Since the set of extreme points $\bm{\Lambda}_{st}$ grows exponentially, the optimality cuts cannot be pre-enumerated. Instead, they are generated dynamically during the iterative process of solving the subproblem $SRP_{st}$ and the relaxed master problem $MRP^{\prime}$. The steps of the L-shaped algorithm are summarized in Algorithm \ref{L-shaped algorithm}.
\vspace{-0.5cm}
\begin{algorithm}[H] 
\caption{L-shaped Algorithm}\label{L-shaped algorithm}
\begin{algorithmic}[1]
\State $lb \leftarrow 0, ub \leftarrow \infty$;
\While{$(ub - lb)/lb > \nu$}
    \State Solve the master problem $MRP^\prime$ and obtain the optimal solution $(x, \bar{P}_{stg}, \bar r_{stg}, \underline r_{stg} , q_{st})$;
    \State $lb \leftarrow \max \{lb,OV($$MRP^\prime$$)\}$, where $OV(\cdot)$ denotes the objective value;
    \For{$s\in [S],t \in [T]$}
    \State Construct the subproblem $SRP_{st}$ according to $(x, \bar{P}_{stg}, \bar r_{stg}, \underline r_{stg})$;
    \State Solve the subproblem $SRP_{st}$ and obtain the dual optimal solution $(\tau^1_{st}, \tau^2_{st}, \bm{\gamma}_{st}, \bm{\mu}^1_{stg}, \bm{\mu}^2_{stg})$;
    \State Add the optimality cut (\ref{optimality cuts}) to the master problem $MRP^\prime$;
    \EndFor
    \State $ub \leftarrow \min \{ ub, OV($$MRP^\prime$$)$ $+$ $\sum_s\sum_tOV(SRP_{st})-q_{st}\}$;
\EndWhile
\end{algorithmic}
\end{algorithm}

\section{Additional Results for Section \ref{sec:ne}}\label{sec:nr}

For each training set, we solve the DDRO and NDRO models across a sequence of radius parameters $\kappa_{st}/\kappa'_{st}$, where each parameter corresponds to a specific total installed capacity. As shown in Figure~\ref{fig:obj}, the minimum total cost achieved by DDRO is lower than that of the benchmark methods, illustrating its potential advantages. 
However, since investment and generation costs dominate the total cost, the benefit from improved risk management may be obscured. This, in turn, complicates the tuning of the radius parameter ($\kappa_{st}/\kappa'_{st}$) and may lead to numerical challenges in identifying optimal values through cross-validation.
Meanwhile, Figure~\ref{fig:rmc} shows that the risk management cost under DDRO and NDRO is consistently lower than that of EO, clearly highlighting DRO’s superior capability in managing uncertainty. Motivated by this observation, we further focus on the risk management performance of different models.

\begin{figure}[htbp]
    \FIGURE
    {% Subcaptions and Graphics
    \subcaptionbox{Total Cost \label{fig:obj}}
    {\includegraphics[width=0.4\textwidth]{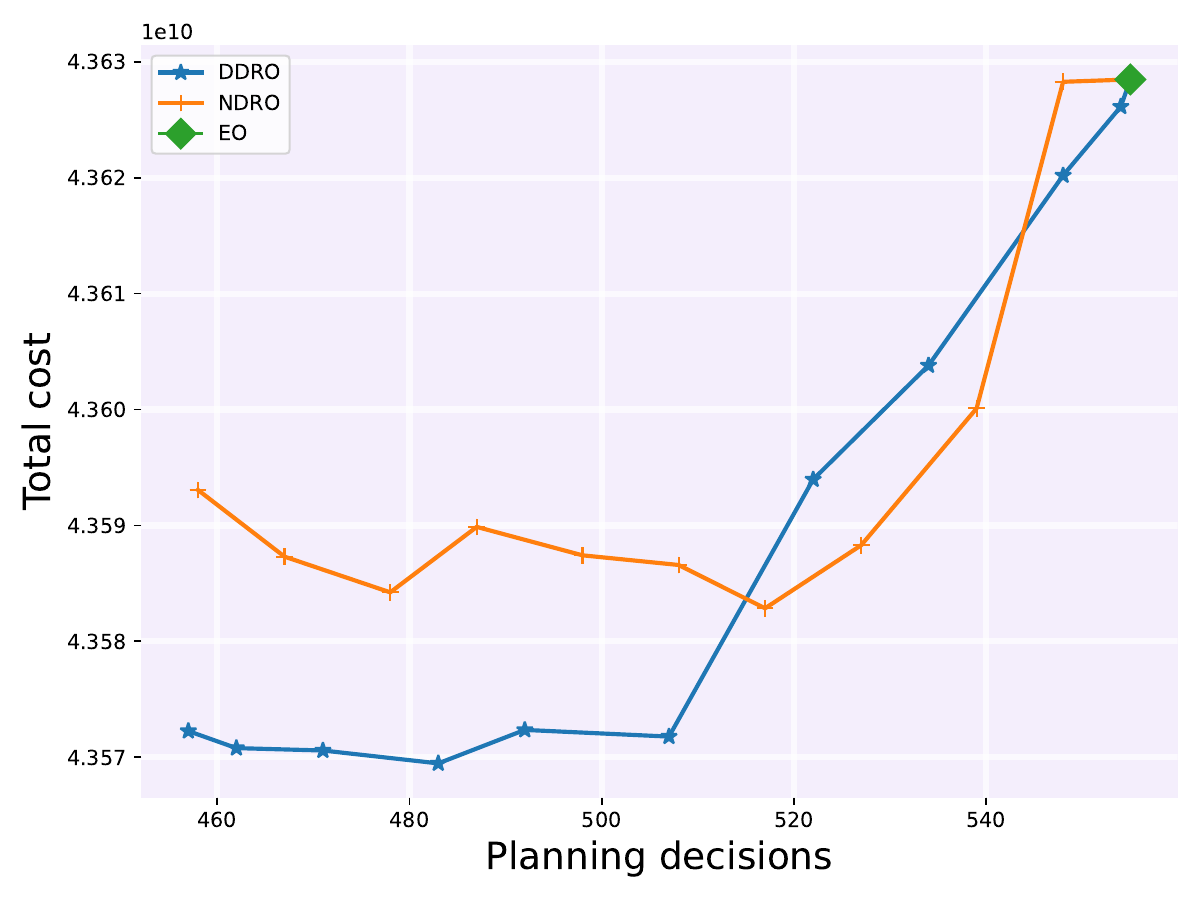}}
    \subcaptionbox{Risk Management Cost \label{fig:rmc}}
    {\includegraphics[width=0.4\textwidth]{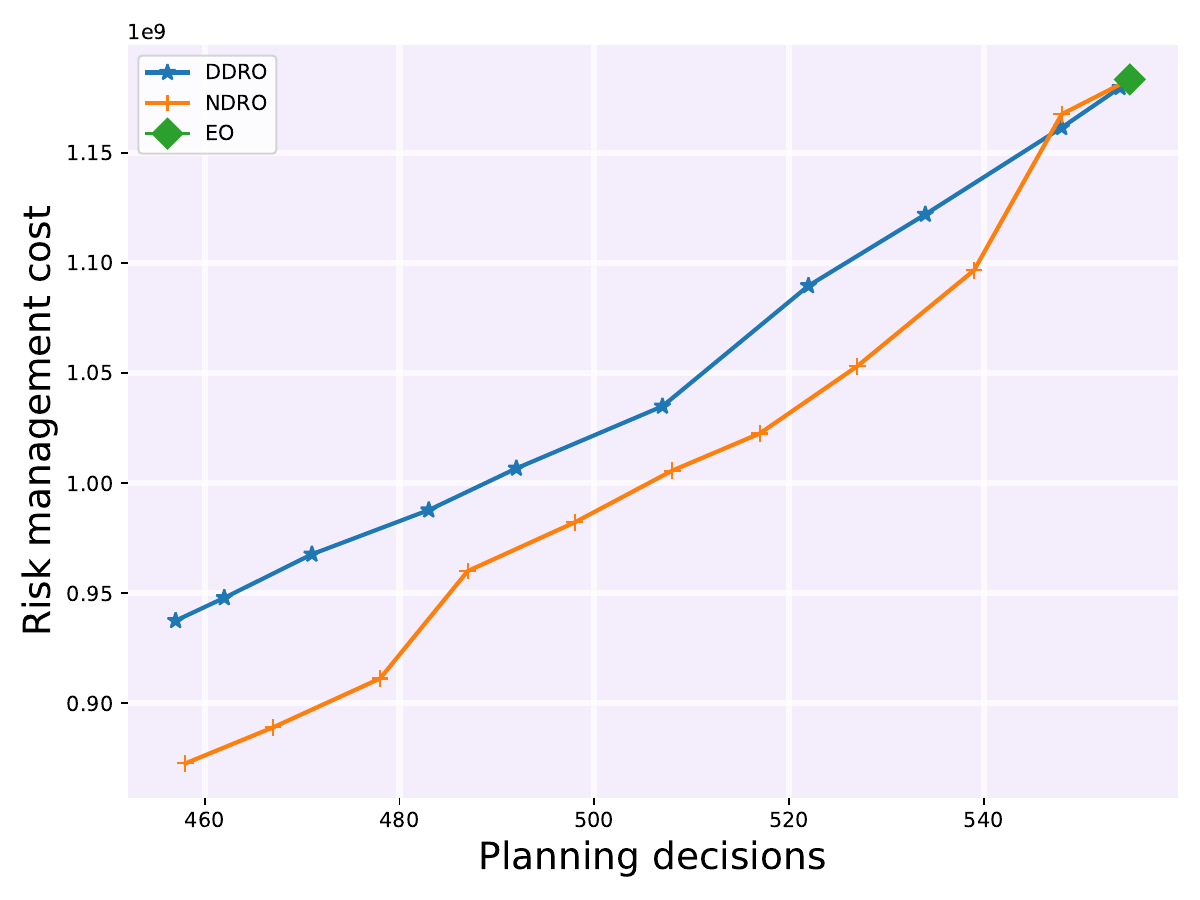}}
    }
    {% Main caption
    Out-of-sample Metrics With Respect to Model Parameters}
    {}
\end{figure}

\begin{figure}[htbp]
    \FIGURE
    {% Subcaptions and Graphics
    \subcaptionbox{$X=400$}
    {\includegraphics[width=0.3\textwidth]{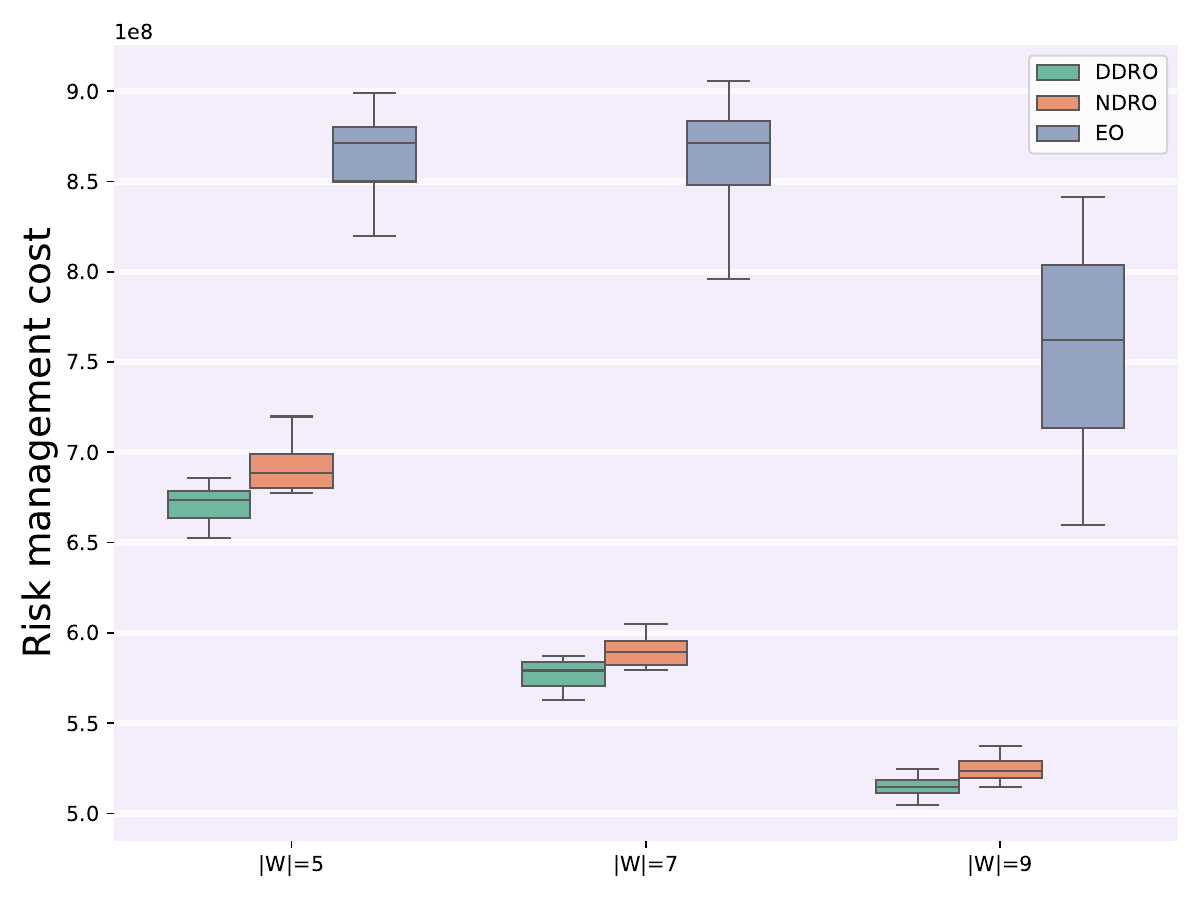}}
    \subcaptionbox{$X=600$}
    {\includegraphics[width=0.3\textwidth]{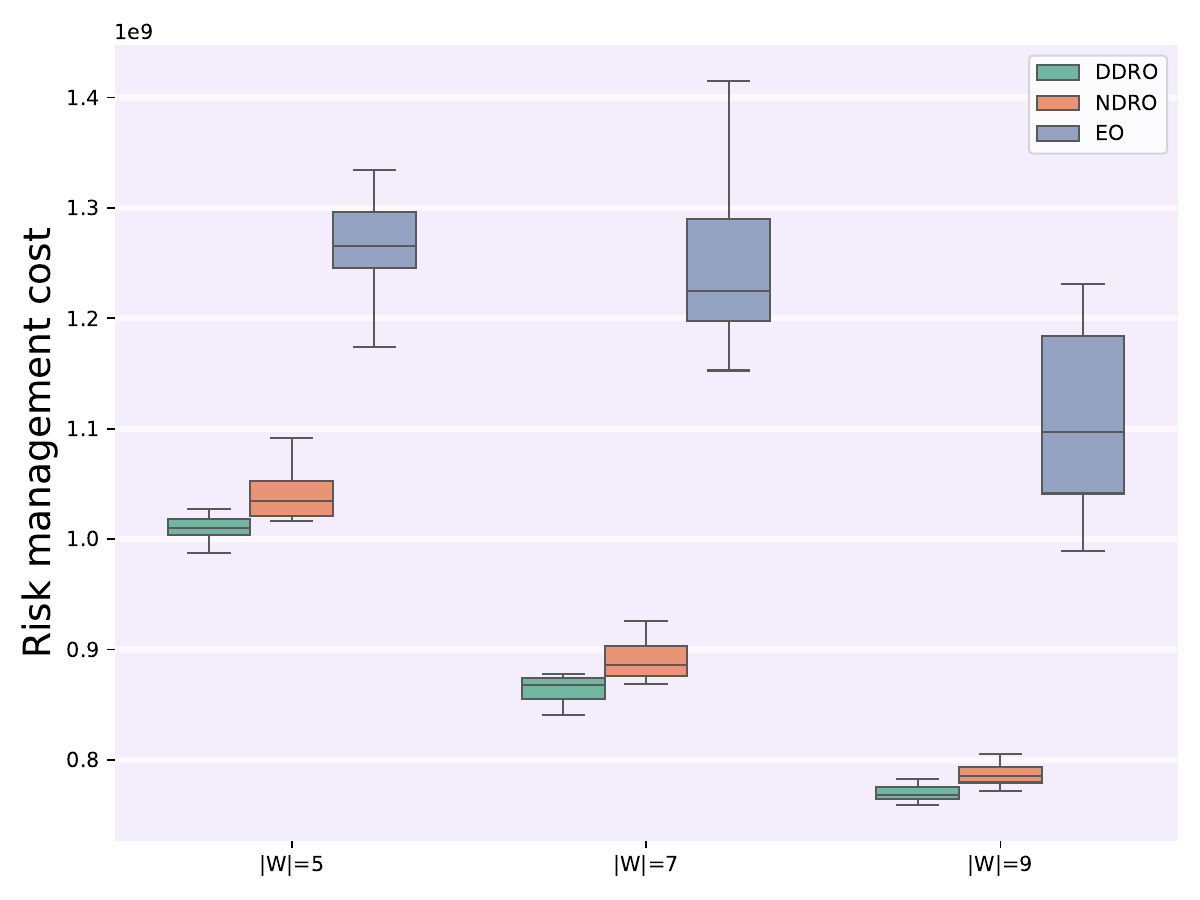}}
    \subcaptionbox{$X=900$}
    {\includegraphics[width=0.3\textwidth]{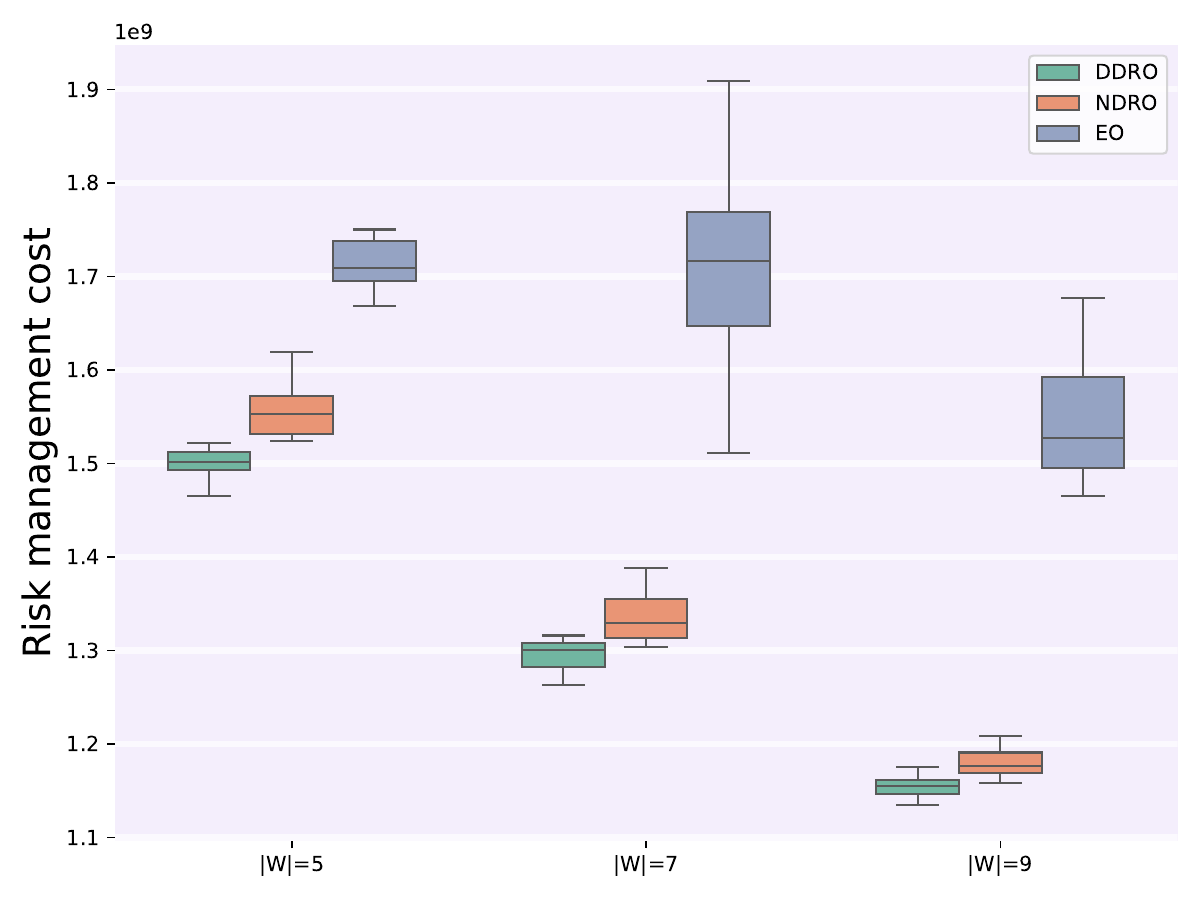}}
    }
    {% Main caption
    Box Plots of Out-of-sample Risk Management Cost Using Traditional Parameters \label{fig:1}}
    {}
\end{figure}
    
\begin{figure}[htbp]
    \FIGURE
    {% Subcaptions and Graphics
    \subcaptionbox{$X=400$}
    {\includegraphics[width=0.3\textwidth]{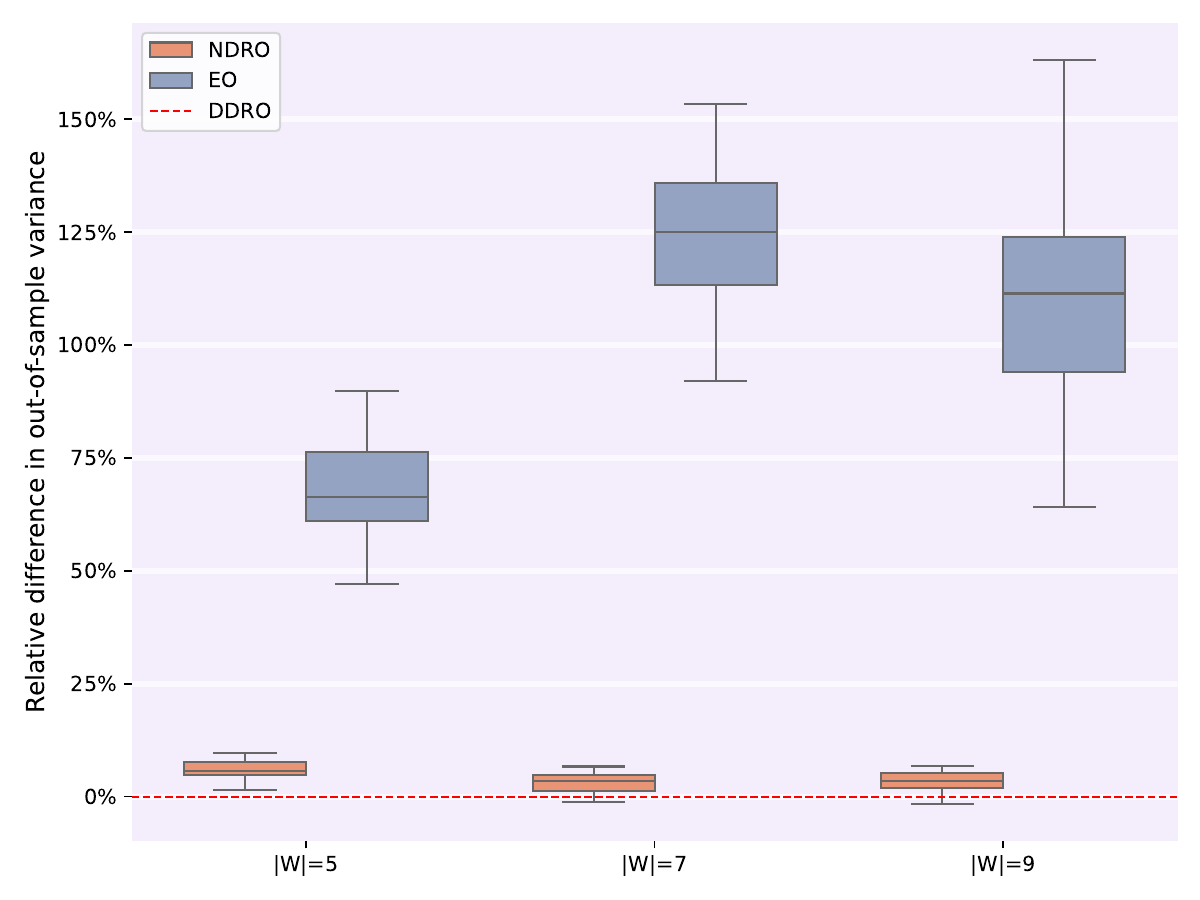}}
    \subcaptionbox{$X=600$}
    {\includegraphics[width=0.3\textwidth]{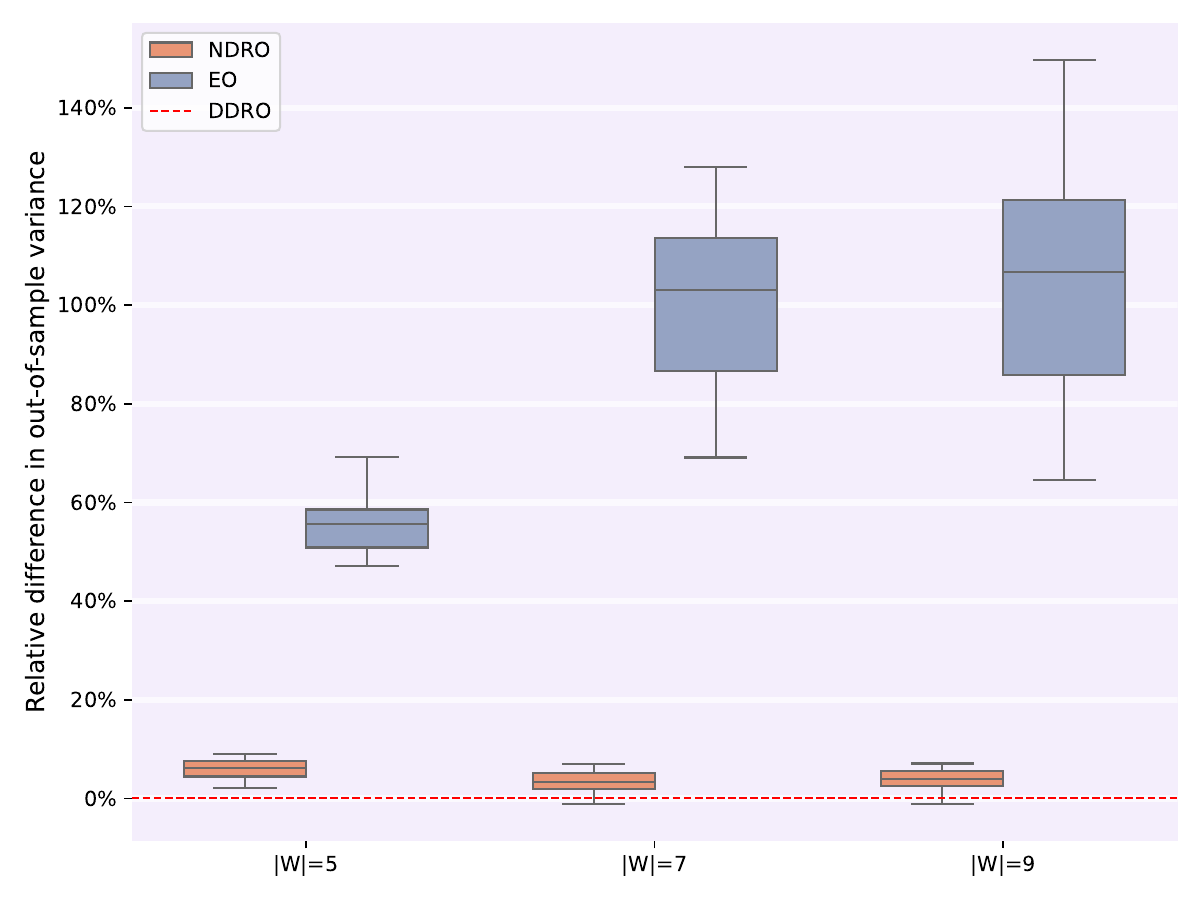}}
    \subcaptionbox{$X=900$}
    {\includegraphics[width=0.3\textwidth]{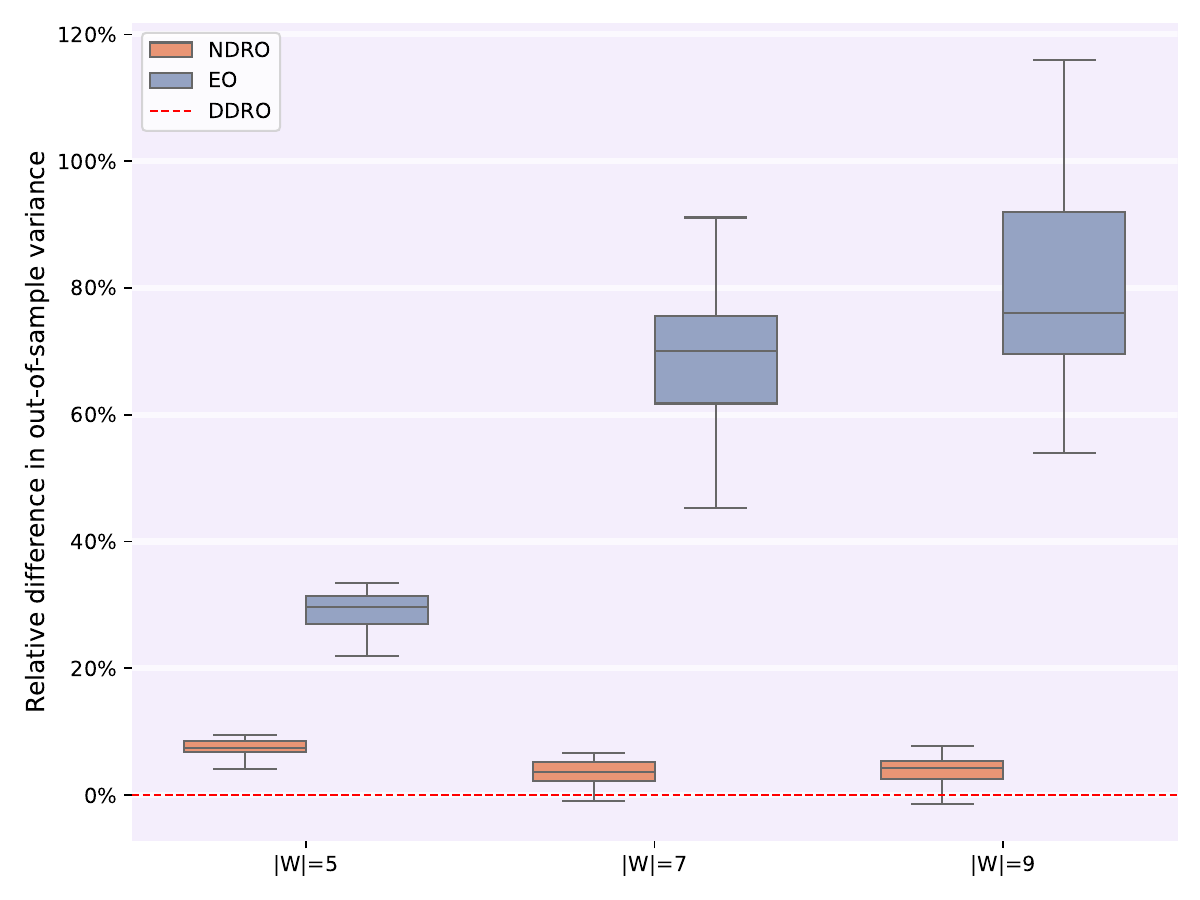}}
    }
    {% Main caption
    Box Plots of Out-of-sample Smoothing Performance Using Traditional Parameters \label{fig:2}}
    {}
\end{figure}

To enable a fair comparison, we fix the total installed capacity $\sum_w x_w = X$. This setting is practical, as total capacity is typically constrained by investment budgets or minimum renewable penetration requirements. As shown in Figures~\ref{fig:1}, under traditional power system parameters, both DDRO and NDRO effectively reduce risk management costs while optimizing total cost.
Additionally, we use the out-of-sample variance of the aggregated wind power output to quantify the smoothing performance. The relative differences in out-of-sample smoothing performance are illustrated in Figure~\ref{fig:2}, where positive values indicate that DDRO achieves a certain percentage reduction in variance compared to benchmark methods. The results highlight that DDRO’s capacity allocation decisions consistently result in smaller variances and more stable aggregated wind power generation.

Notably, in this setting—where the objective remains minimizing total cost—the EO model shows weaker performance in risk control compared to its behavior in Section~\ref{sec:ne}. These results suggest that the DRO-based models, when minimizing total cost, tend to do so by prioritizing the reduction of risk management costs. Nevertheless, directly comparing risk management costs in this case is not entirely fair, since EO may provide benefits in investment and generation costs. 

To isolate and fairly assess the models’ risk management capabilities, we modify the parameter settings in Section~\ref{sec:ne} so that the optimization objective more directly emphasizes risk-related performance. Specifically, we assume uniform unit investment costs and wind power forecasts across all wind farms, identical reserve and adjustment costs for all thermal units, and sufficiently large transmission line capacities. Under these settings, fixing the total installed capacity ensures that investment and generation costs—typically the dominant components of total cost—remain constant across all models. This design allows for a clearer comparison of risk management performance and highlights the advantages of the DDRO and NDRO models in managing uncertainty.

\section{Data Generation}\label{sec:dg}

The data generation process is designed to produce non-negative samples that primarily preserve the marginal distributions of the original features, as well as their mean vector and covariance matrix. The methodology involves several key stages:
\begin{enumerate}[label=\arabic*.]
    \item Initialization \& Marginal Transformation: The mean vector and covariance matrix of the real data are computed. A QuantileTransformer is then fitted to the real data to map its marginal distributions to an approximately standard normal distribution. The correlation matrix of these Gaussianized data is determined to serve as the target covariance for subsequent sampling.

    \item Gaussian Sampling \& Inverse Transformation: New samples are drawn from a multivariate normal distribution parameterized by a zero mean and the correlation matrix derived in the previous step. These new Gaussian samples are then inverse-transformed using the fitted QuantileTransformer, mapping them back to the scale and marginal distributions characteristic of the real data.

    \item Iterative Non-negativity Enforcement \& Covariance Matching: An iterative refinement procedure is applied. In each iteration:
    \begin{enumerate}[label=\alph*.]
        \item The current generated data points are truncated at zero to enforce the non-negativity constraint.
        \item A 'covariance matching' step adjusts the mean and covariance of these non-negative samples. This involves centering the data, whitening it using the Cholesky decomposition of its current covariance matrix, and then re-correlating it using the Cholesky decomposition of the real data's covariance matrix and shifting it by the real mean. This aims to restore the target covariance structure that might have been altered by the non-negativity enforcement.
    \end{enumerate}

    \item Finalization: After a predefined number of iterations, a final truncation at zero ensures all output values are strictly non-negative. This iterative approach seeks to balance the non-negativity constraint with the preservation of the real data's correlational structure and marginal distribution characteristics, though some minor deviation in the final covariance may occur due to the non-negativity imposition.
\end{enumerate}

\begin{equation*}
    Correlation \  matrix=
    \begin{bmatrix}
        1.0000      & 0.3125     & 0.2991     & 0.4763 \\
        0.3125      & 1.0000     & 0.2688     & 0.3865 \\
        0.2991      & 0.2688     & 1.0000     & 0.4822 \\
        0.4763      & 0.3865     & 0.4822     & 1.0000
    \end{bmatrix}
\end{equation*}

\begin{figure}[htbp]
    \FIGURE
    {% Subcaptions and Graphics
    \subcaptionbox{Real data}
    {\includegraphics[width=0.4\textwidth]{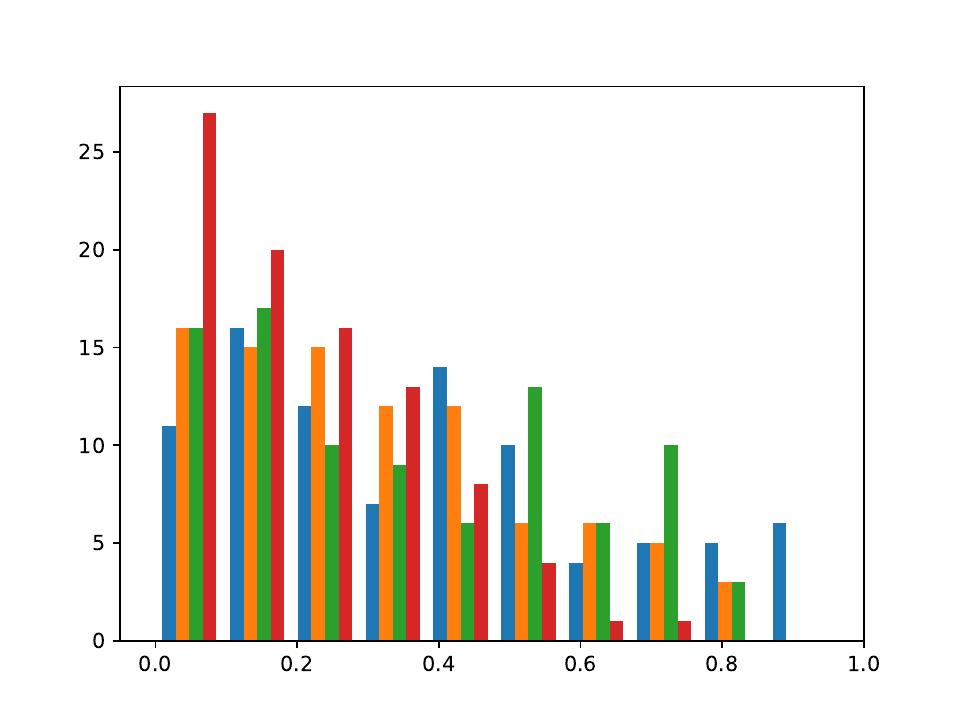}}
    \subcaptionbox{Generated data}
    {\includegraphics[width=0.4\textwidth]{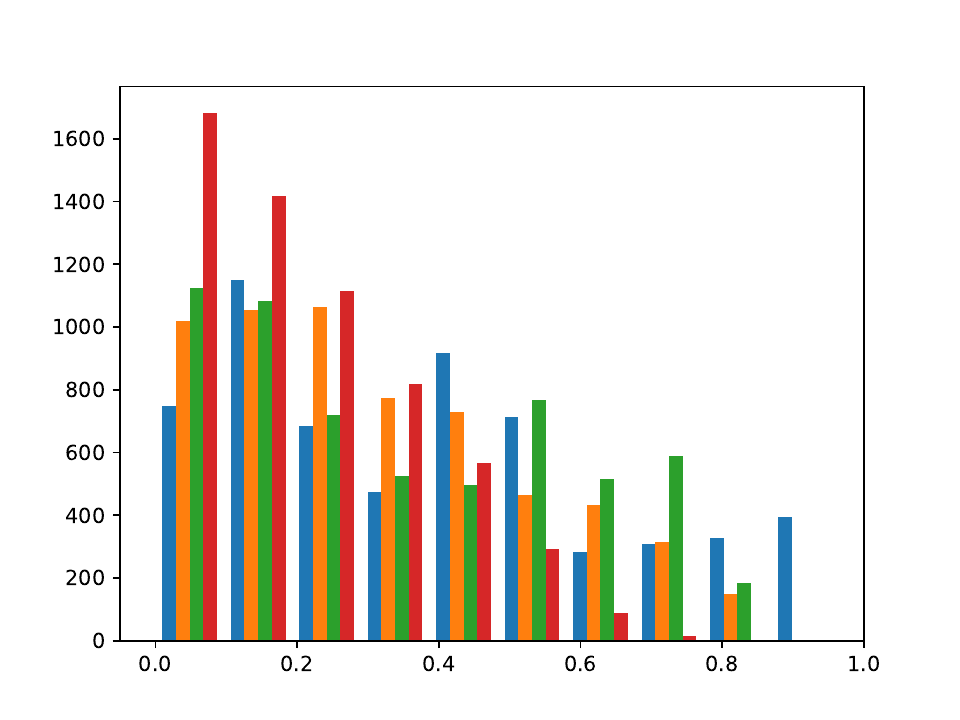}}
    }
    {% Main caption
    Histograms of Real Data and Generated Data \label{fig:data generation}}
    {}
\end{figure}

\end{APPENDICES}

%%%%%%%%%%%%%%%%%
\end{document}